\theoremstyle{plain}
\newtheorem{lemma}{Lemma}[section]
\newtheorem{prop}[lemma]{Proposition}
\newtheorem{theo}[lemma]{Theorem}
\newtheorem{coro}[lemma]{Corollary}
\theoremstyle{remark}
\newtheorem{rem}[lemma]{Remark}
\theoremstyle{definition}
\newtheorem{definition}[lemma]{Definition}
\newtheorem{ex}[lemma]{Example}
\DeclareMathOperator{\Inn}{Inn}
\DeclareMathOperator{\aut}{Aut}
\DeclareMathOperator{\Hom}{Hom}
\newcommand{\id}{\mathrm{Id}}
\newcommand{\C}{\mathscr{C}}
\newcommand{\Ds}{\mathscr{D}}
\newcommand{\N}{\mathbb{N}}
\newcommand{\X}{\mathcal{X}}
\newcommand{\K}{\mathcal{K}}
\newcommand{\loc}{\mathcal{L}}
\newcommand{\Top}{\textnormal{\bf Top}}
\newcommand{\ATop}{\textnormal{\bf ATop}}
\newcommand{\ord}{\textnormal{\bf Ord}}
\newcommand{\cat}{\textnormal{\bf Cat}}
\newcommand{\grp}{\textnormal{\bf Grp}}
\newcommand{\grpd}{\textnormal{\bf Grpd}}
\newcommand{\cst}{c}
\newcommand{\D}{\mathcal{D}}
\newcommand{\E}{\mathcal{E}}
\newcommand{\Su}{\mathbb{S}}
\newcommand{\st}{\mid}
\newcommand{\B}{\mathcal{B}}
\newcommand{\A}{\mathcal{A}}
\newcommand{\Tf}{\mathbb{T}^2}
\newcommand{\Kl}{\mathbb{K}}
\newcommand{\Sp}{\mathcal{S}}
\newcommand{\Fib}{\text{Fib}}
\newcommand{\simp}{\overset{p}{\sim}}
\newcommand{\Ko}{\text{\bf{K}}}
\newcommand{\ev}{\text{ev}}
\newcommand{\T}{\mathcal{T}}
\begin{document}

\title[Fiber bundles over Alexandroff spaces]{Fiber bundles over Alexandroff spaces}

\author{Nicol\'as Cianci}
\address{Facultad de Ciencias Exactas y Naturales \\ Universidad Nacional de Cuyo \\ Mendoza, Argentina.}
\email{nicocian@gmail.com}

\author{Miguel Ottina}
\address{Instituto Interdisciplinario de Ciencias B\'asicas and Facultad de Ciencias Exactas y Naturales \\ Universidad Nacional de Cuyo and CONICET \\ Mendoza, Argentina.}
\email{miguelottina@gmail.com}

\subjclass[2010]{Primary: 55R10, 55R15. Secondary: 54B30.}


\keywords{Fiber bundle, Alexandroff space, Preordered set, Grothendieck construction, Hurewicz fibration.}

\thanks{Research partially supported by grants M044 and 06/M118 of Universidad Nacional de Cuyo. The first author was also partially supported by a CONICET doctoral fellowship.}

\begin{abstract}
We introduce a topological variant of the Grothendieck construction which serves to represent every fiber bundle over an Alexandroff space. Using this result we give a classification theorem for fiber bundles over Alexandroff spaces with T$_0$ fiber and we construct a universal bundle for bundles with T$_0$ fiber over posets which are cofibrant objects of the category of small categories. Moreover, we prove that our construction induces an equivalence of categories between a suitable category of functors and the category of fiber bundles over a fixed Alexandroff space. In addition,  we prove that any fiber bundle over an Alexandroff space is a fibration.
\end{abstract}

\maketitle

\section{Introduction}

Alexandroff spaces are topological spaces which satisfy that any intersection of open subsets is an open subset. Finite topological spaces are perhaps the simplest examples of Alexandroff spaces. It is well known that there exists a functorial correspondence between Alexandroff spaces and preordered sets which preserves the underlying set \cite{alexandroff1937diskrete} . Under this correspondence partially ordered sets correspond to Alexandroff T$_0$--spaces. And since any preordered set can be regarded as a small category, Alexandroff spaces constitute a meeting point of Topology, Combinatorics and Category Theory. 

In addition, McCord proves in \cite{mccord1966singular} that for every simplicial complex $K$ there exists a locally finite T$_0$--space $\X(K)$ together with a weak homotopy equivalence from the geometric realization of $K$ to $\X(K)$. It follows that for each topological space there exists an Alexandroff space which is weak homotopy equivalent to it. Moreover, the category of posets admits a closed model category structure which is Quillen equivalent to the usual model category structure of the category of topological spaces \cite{thomason1980cat,raptis2010homotopy}. 

In this article we introduce a topological variant of the Grothendieck construction for functors from a preordered set to the category of topological spaces, which we call \emph{topological Grothendieck construction} and which coincides with the Grothendieck construction in the case that both of them can be applied. One of the main results of this article states that every fiber bundle over an Alexandroff space $B$ is isomorphic to the topological Grothendieck construction of a morphism-inverting functor whose domain is $B$. Using this result we prove that for any Alexandroff space $B$ and for any T$_0$--space $F$ there exists a canonical bijection between isomorphism classes of functors from $B$ to $\aut(F)$ and fiber bundles over $B$ with fiber $F$, which is induced by the topological Grothendieck construction. As a corollary of this result we obtain that every fiber bundle over a simply-connected Alexandroff space is trivial. In addition, we characterize the functors such that their topological Grothendieck constructions are fiber bundles. Moreover, in section \ref{sect_universal_bundle}, we construct a universal bundle for bundles with T$_0$ fiber over posets which are cofibrant objects of the category of small categories.

Then we apply our results to prove that any fiber bundle over an Alexandroff space is a fibration. Recall that a classical result states that a local fibration\footnote{A continuous function $p\colon E\to B$ is called a \emph{local fibration} if there is an open cover $\{U_\alpha\}_{\alpha\in A}$ of $B$ such that the restriction $p|\colon p^{-1}(U_\alpha)\to U_\alpha$ is a fibration for every $\alpha\in A$. It is immediate that fiber bundles are local fibrations.} over a space $B$ is a fibration, provided that any open cover of $B$ has a numerable refinement \cite{hurewicz1955concept,spanier1981algebraic}. However, since continuous functions from an Alexandroff T$_0$--space to the unit interval are locally constant, non-trivial open covers of connected Alexandroff T$_0$--spaces are not numerable. Hence, this classical result does not apply to fiber bundles over Alexandroff T$_0$--spaces.
 
Finally, we prove that the topological Grothendieck construction yields an equivalence of categories between a suitable category of functors and the category of fiber bundles over a fixed Alexandroff space. The key ingredient for this equivalence of categories is the concept of \emph{weak natural transformation} between functors that we introduce in \ref{def_weak_natural_transformation}, which turns out to be the exact notion of arrows between functors that is needed to obtain the desired equivalence of categories.
 
\section{Preliminaries}

\subsection{Notation} \label{subsec_notation}

We fix the notation that will be used throughout this article.
\begin{itemize}
\item The unit interval $[0,1]$ will be denoted by $I$. 
\item The Sierpinski space will be denoted by $\Sp$. This is the topological space over the set $\{0,1\}$ where the unique non-trivial open subset is the set $\{0\}$.
\item Let $X$ and $Y$ be topological spaces and let $y\in Y$. We define $\cst_y\colon X\to Y$ as the constant map with value $y$.
\item Let $B$ be a topological space. The space of paths in $B$ (with the the compact-open topology) will be denoted by $B^{I}$.
\item Let $X$ and $Y$ be topological spaces and let $f\colon X\to Y$ be a continuous function. Let $\ev_0\colon Y^{I}\to Y$ be the map defined by $\ev_0(\gamma)=\gamma(0)$. We define the space $X\times_f Y^{I}$ as the set $\{(x,\gamma)\in X\times Y^{I}\st\gamma(0)=f(x)\}$ with the subspace topology with respect to $X\times Y^{I}$. Observe that the space $X\times_f Y^{I}$ is the pullback of the diagram $X\xrightarrow{f} Y\xleftarrow{\ev_0} Y^{I}$.

\item Let $B$ be a topological space and let $\alpha,\beta\in B^{I}$. We will write $\alpha\simp\beta$ if $\alpha$ and $\beta$ are path-homotopic. If $\alpha$ and $\beta$ are paths in $B$ such that $\alpha(1)=\beta(0)$, the concatenation of $\alpha$ and $\beta$ will be denoted by $\alpha*\beta$.

\item Let $B$ be a topological space and let $\gamma\in B^{I}$. The inverse path of $\gamma$ will be denoted by $\bar{\gamma}$, and the homotopy class of $\gamma$ will be denoted by $[\gamma]$. In addition, for $0\leq a\leq b\leq 1$, $\gamma_{[a,b]}\colon I\to B$ will denote the (increasing) linear reparametrization of the restriction $\gamma|\colon [a,b]\to B$ of $\gamma$. Observe that if $0\leq a\leq b\leq c\leq 1$ then $\gamma_{[a,b]}*\gamma_{[b,c]}\simp\gamma_{[a,c]}$.

\item Let $X$ be a topological space. The fundamental groupoid of $X$ will be denoted by $\Pi_1(X)$. The composition law in $\Pi_1(X)$ is defined by $[\beta][\alpha]=[\alpha*\beta]$ for paths $\alpha$ and $\beta$ in $X$ such that $\alpha(1)=\beta(0)$.
The fundamental group $\pi_1(X,x_0)$ of $X$ at some $x_0\in X$ is the full subgroupoid of $\Pi_1(X)$ whose unique object is $x_0$.
\item $\cat$ will denote the category of small categories and functors. 
\item $\Top$ will denote the category of topological spaces and continuous functions. In addition, if $B$ is a topological space, the category of objects over $B$ in $\Top$ will be denoted by $\Top/B$.
\item $\Top_0$ will denote the full subcategory of $\Top$ whose objects are the T$_0$ spaces.
\item $\grpd$ will denote the category of groupoids and groupoid homomorphisms and $\grp$ will denote the category of groups and group homomorphisms, considered as a full subcategory of $\grpd$ whose objects are the one-object groupoids.
\item $\ord$ will denote the category of preordered sets and order preserving morphisms. This category will be identified with the category of thin small categories in the usual way, that is, a preordered set $(X,\leq)$ will be considered as a thin category with a unique arrow $x\to y$ (also denoted by $x\leq y$) whenever $x\leq y$, for $x,y\in X$.
\item For a category $\C$ and an object $c_0$ of $\C$, the category $\aut_{\C}(c_0)$ is the subcategory of $\C$ with object $c_0$ and arrows the automorphisms of $c_0$. Note that this category is a one-object groupoid and hence, it will be regarded both as a category and as a group. In particular, for a topological space $X$ and $x_0\in X$, we have that $\pi_1(X,x_0)=\aut_{\Pi_1(X)}(x_0)$.

If $F$ is a topological space, the category $\aut_{\Top}(F)$ will be simply denoted by $\aut(F)$.
\end{itemize}

\subsection{Alexandroff spaces and preordered sets}
\label{subsect_posets}
An \emph{Alexandroff space} is a topological space in which arbitrary intersections of open sets are open. 
The full subcategory of $\Top$ whose objects are the Alexandroff spaces will be denoted by $\ATop$.

Note that for every element $x$ in an Alexandroff space $X$ there exists a \emph{minimal open neighbourhood} of $x$, which is denoted by $U^{X}_x$ (or simply by $U_x$). Namely, $U_x$ is the intersection of every open neighbourhood of $x$. A preorder $\leq$ is defined in an Alexandroff space $X$ by
\begin{displaymath}
x\leq y\text{ if and only if } U_x\subseteq U_y.
\end{displaymath}
The open sets of $X$ are precisely the lower sets of $X$ with respect to $\leq$ and, in particular, $U_x=\{y\in X \st y\leq x\}$ for every $x\in X$.

It is easy to see that this defines a one-to-one correspondence between Alexandroff topologies on a set $X$ and preorder relations in $X$, which induces an isomorphism between $\ATop$ and $\ord$. Hence, every Alexandroff space can be regarded as a preordered set and every continuous function between Alexandroff spaces can be regarded as an order-preserving map. We will make use of this fact throughout this article without further notice. 

Observe also that there are (full) inclusions \[\ATop\hookrightarrow\Top\text{ and }\ord\hookrightarrow\cat.\]

In \cite{mccord1966singular}, M. McCord proved that the minimal open subsets of an Alexandroff space are contractible subspaces. In particular, Alexandroff spaces are locally path-connected, and then the connected components of an Alexandroff space coincide with its path-connected components. In addition, if $X$ is an Alexandroff space then the connected components of $X$ are the connected components of $X$ as a preordered set, that is, if $x,y\in X$ then $x$ and $y$ are on the same connected component of $X$ if and only if there exist $n\in\N$ and $z_0,\ldots,z_n\in X$ such that $z_0=x$, $z_n=y$ and the elements $z_{i-1}$ and $z_i$ are comparable for all $i\in\{1,\ldots,n\}$. Observe that if $X$ is an Alexandroff space and $a,b\in X$ are such that $a\leq b$ then there is a canonical path in $X$ from $a$ to $b$, that will be called $\eta(a\leq b)$, which is defined by
\begin{displaymath}
\eta(a\leq b)(t)=\begin{cases} a&\text{ if $t<1$,}\\ b&\text{ if $t=1$.}\end{cases}
\end{displaymath}

McCord also defines two constructions on topological spaces which are particularly interesting in the context of Alexandroff spaces. These are the \emph{non-Hausdorff cone} and the \emph{non-Hausdorff suspension}. The non-Hausdorff cone of a topological space $X$ is the space $\mathbb{C}X$ over the set $X\cup\{+\}$, with $+$ an element not in $X$, with the topology generated by the open sets of $X$. Namely, the open sets of $\mathbb{C}X$ are those sets that are open in $X$, and the whole set $\mathbb{C}X$.
The non-Hausdorff suspension of a topological space $X$ is the space $\mathbb{S}X$ over the set $X\cup\{+,-\}$, with $+$ and $-$ not in $X$, with the topology generated by the open sets of $X$ and the sets $X\cup \{+\}$ and $X\cup\{-\}$.

These constructions define two functors $\mathbb{C},\mathbb{S}\colon\Top\to\Top$ which restrict to functors $\mathbb{C},\mathbb{S}\colon\ATop\to\ATop$. Note that for an Alexandroff space $X$, the space $\mathbb{C}X$ is the preordered set that is obtained by adding a maximum to $X$, while the space $\mathbb{S}X$ is the preordered set obtained by adding two  incomparable elements that are greater than (and not less than) every element of $X$.

\subsection{Weak homotopy type of Alexandroff spaces}

Recall that the \emph{Kolmogorov quotient} of a space $X$ is the T$_0$--space $\Ko X=X/\sim$ where $\sim$ is the equivalence relation in $X$ that identifies topologically indistinguishable points of $X$, that is, if $a,b\in X$ then $a\sim b$ if and only if for every open subset $U\subseteq X$, $a\in U \Leftrightarrow b\in U$. For every topological space $X$, let $\sigma_X\colon X\to\Ko X$ be the canonical quotient map. Note that for every open subset $U$ of $X$, $\sigma_X^{-1}\sigma_X(U)=U$ and hence $\sigma_X$ is an open map and the initial topology on $X$ with respect to $\sigma_X$ coincides with the topology of $X$. In addition, given a continuous map $f\colon X\to Y$, there exists a unique continuous map $\Ko (f)\colon \Ko X\to \Ko Y$ such that $\Ko(f) \sigma_X=\sigma_Y f$. It is not hard to see that $\Ko$ defines a functor from $\Top$ to $\Top_0$. In addition, if $\iota_0\colon \Top_0\to \Top$ is the inclusion functor then the collection $\{\sigma_X \st X\text{ is a topological space}\}$ defines a natural transformation $\sigma\colon\id_{\Top}\Rightarrow\iota_0\Ko$.

McCord proved in \cite{mccord1966singular} that for every Alexandroff space $X$, any section of the quotient map $\sigma_X$ is a homotopy inverse of $\sigma_X$. Hence, every Alexandroff space is homotopically equivalent to the T$_0$--space $\Ko X$.

Finally, McCord proved that for every Alexandroff T$_0$--space $X$ there exists a natural weak homotopy equivalence $f_X\colon |\K(X)|\to X$, where $\K(X)$ is the simplicial complex with vertices the elements of $X$ and simplices the finite non-empty chains of $X$, and where $|\K(X)|$ denotes the \emph{geometric realization} of the simplicial complex $\K(X)$. Thus, every Alexandroff space $X$ is weakly equivalent to the CW-complex $|\K(\Ko X)|$.

\subsection{Fundamental groupoids of Alexandroff spaces} \label{subsect_fundamental_groupoid}

Recall that there exists a functor $B\colon \cat\to\Top$ that maps every small category to its \emph{classifying space}, that is, to the geometric realization of its simplicial nerve \cite{segal1968classifying}. It is well known that if $X$ is a poset, then $BX$ is naturally homeomorphic to the space $|\K(X)|$. Hence, for every T$_0$ Alexandroff space $X$ there is a natural weak equivalence $\varphi_X\colon BX\to X$. In \cite[Theorem 2.6]{cianci2019coverings} it is shown that this result is in fact true for every Alexandroff space. In particular, $\pi_1(BX,x_0)$ is naturally isomorphic to $\pi_1(X,x_0)$ for every Alexandroff space $X$ and every $x_0\in X$.

Recall also that there exists a functor $\loc\colon\cat\to\grpd$ that maps every small category $\C$ to a groupoid $\loc\C$ which is the localization of $\C$ at its set of morphisms (in the sense of \cite{gabriel1967calculus}). In addition, there exists a functor $\iota_\C\colon \C\to\loc\C$ such that for every morphism-inverting functor $F\colon \C\to\Ds$ there exists a unique functor $\overline{F}\colon \loc \C\to \Ds$ such that $F=\overline{F}\iota_\C$. The groupoid $\loc\C$ and the functor $\iota_\C$ are defined up to a unique canonical isomorphism by this universal property. 

Now, let $X$ be an Alexandroff space and consider the functor $Z_X\colon X\to \Pi_1(X)$ that is the identity on objects and maps every arrow $x\leq x'$ in $X$ to the path-homotopy class $[\eta(x\leq x')]$. Since $\Pi_1(X)$ is a groupoid, this functor is morphism-inverting and thus, there exists a unique functor $\overline{Z_X}\colon \loc X\to \Pi_1(X)$ such that $Z_X=\overline{Z_X}\iota_X$. In \cite[Theorem 3.9]{cianci2019coverings} it is shown that the functor $\overline{Z_X}$ is an isomorphism and that, in fact, the collection $\{Z_B \st B \textnormal{ is an Alexandroff space}\}$ defines a natural isomorphism that can be restricted to a natural isomorphism (of groups) $\aut_{\loc X}(x_0)\cong \pi_1(X,x_0)$ for every $x_0\in X$.

\subsection{The Grothendieck construction}

Recall that the \emph{Grothendieck construction} of a functor $C\colon B\to \cat$ from a small category $B$ to $\cat$, is the category $\int C$ whose objects are pairs $(b,x)$ where $b$ is an object of $B$ and $x$ is an object of $C(b)$. The morphisms in $\int C$ from $(b,x)$ to $(b',x')$ are pairs $(f,g)$ where $f$ is a morphism in $B$ from $b$ to $b'$ and $g$ is a morphism in $C(b')$ from $C(f)(x)$ to $x'$. 
The canonical projection $\pi^C_B\colon \int C\to B$, which maps $(b,x)$ to $b$, is easily seen to be a functor and is usually regarded as an object over $B$.
A natural transformation $\alpha\colon C\Rightarrow D$ between functors $C,D\colon B\to \cat$ induces a functor $\alpha_*\colon \int C\to \int D$ defined by $\alpha_*(b,x)=(b,\alpha_b(x))$ for objects $(b,x)$ of $\int C$ and $\alpha_\ast(f,g)=(f,\alpha_{b'}(g))$ for morphisms $(f,g)$ of $\int C$. 
Furthermore, the Grothendieck construction is actually a functor $\int\colon \cat^{B} \to \cat/B$ from the category $\cat^{B}$ of functors from $B$ to $\cat$ and natural transformations to the category $\cat/B$ of objects over $B$ in $\cat$. 

Now, if $B$ is a preordered set (or equivalently, an Alexandroff space) and $F\colon B\to \cat$ is a functor sending objects of $B$ to preordered sets, then $\int F$ is again a preordered set, where for all $(b,x), (b',x')\in \int F$, we have that $(b,x)\leq (b',x')$ if and only if $b\leq b'$ in $B$ and $F(b\leq b')(x)\leq x'$ in $F(b')$. Thus, $\int F$ is both a category and a topological space.

\section{Topological Grothendieck construction}

In this section we define the \emph{topological Grothendieck construction} for functors $F\colon B\to\Top$, where $B$ is a preordered set, or equivalently, an Alexandroff space. This construction, which extends McCord's non-Hausdorff cone and suspension as well as the non-Hausdorff homotopy colimit of \cite{fernandez2016homotopy}, will play a crucial role in this article.

\begin{definition}
Let $B$ be an Alexandroff space (or equivalently, a preordered set) and let $D\colon B\to\Top$ be a functor. We define 
\begin{displaymath}
\int D = \bigcup\limits_{b\in B}\{b\}\times D(b).
\end{displaymath}
For each $b\in B$ and for each open subset $V$ of $D(b)$ we define 
\begin{displaymath}
J_D(b,V)=\bigcup\limits_{v\in U_b}\{v\}\times D(v\leq b)^{-1}(V).
\end{displaymath}
The set $J_D(b,V)$ will be denoted by $J(b,V)$ when there is no risk of confusion.

Let $\B=\{J(b,V) \st b\in B\textnormal{ and $V$ is an open subset of $D(b)$}\}$. It is not difficult to verify that $\B$ is a basis for a topology on $\int D$ since for all $b,b'\in B$, for all $V$ and $V'$ open subsets of $D(b)$ and $D(b')$ respectively and for all $(\beta,x)\in J(b,V)\cap J(b',V')$ we have that
\begin{displaymath}
(\beta,x)\in J(\beta,D(\beta\leq b)^{-1}(V)\cap D(\beta\leq b')^{-1}(V'))\subseteq J(b,V)\cap J(b',V').
\end{displaymath}
We consider $\int D$ as a topological space with the topology generated by $\B$. The topological space $\int D$ will be called  the \emph{topological Grothendieck construction} of $D$.
\end{definition}

With the notations of above, note that if $B=\varnothing$ then $\int D=\varnothing$. 

\begin{rem} \label{rem_embedding}
Let $B$ be a non-empty Alexandroff space, let $D\colon B\to\Top$ be a functor and let $b\in B$. Let $\iota_b\colon D(b)\to \int D$ be the map defined by $\iota_b(x)=(b,x)$. It is easy to check that $\iota_b$ is a continuous and injective map. Moreover, since for each open subset $V$ of $D(b)$ we have that $V=\iota_b^{-1}(J(b,V))$ it follows that $\iota_b$ is a topological embedding.
\end{rem}

The following proposition states that the definition of the topological Grothendieck construction is compatible with the definition of the Grothendieck construction in the cases that both of them can be applied.

\begin{prop}
Let $B$ be an Alexandroff space and let $D\colon B\to \ord$ be a functor. Thus, $D$ can be regarded both as a functor to $\cat$ and a functor to $\Top$. More precisely, let $\iota_C\colon\ord\to\cat$ be the usual inclusion functor and let $\iota_T\colon\ord\to\Top$ the functor that takes every preordered set to the corresponding Alexandroff space. We consider the compositions $D_C=\iota_C D$ and $D_T=\iota_T D$. Then, the Grothendieck construction $\int D_C$ (which is a preordered set) regarded as an Alexandroff space coincides with the topological Grothendieck construction $\int D_T$. 
\end{prop}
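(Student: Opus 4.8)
The plan is to show that the two spaces have the same underlying set (which is immediate from the definitions) and then that they have the same topology. Since both are Alexandroff spaces on the same set, it suffices to show that their associated preorders coincide, or equivalently — since an Alexandroff topology is determined by its minimal open sets — that for every point $(b,x) \in \int D$ the minimal open neighbourhood computed in $\int D_C$ equals the one computed in $\int D_T$. I would carry this out by computing both explicitly.

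First I would recall that on the underlying set $\bigcup_{b \in B}\{b\}\times D(b)$, the Grothendieck construction $\int D_C$ carries the preorder in which $(v,y) \leq (b,x)$ iff $v \leq b$ in $B$ and $D_C(v\leq b)(y) \leq x$ in $D(b)$. Since $D(b)$ is a preordered set regarded as an Alexandroff space, the condition $D(v\leq b)(y)\leq x$ is exactly the condition $y \in D(v\leq b)^{-1}(U^{D(b)}_x)$, because $U^{D(b)}_x = \{z \in D(b) \st z \leq x\}$ and preimages of lower sets under order-preserving maps are lower sets. Therefore the minimal open set of $(b,x)$ in the Alexandroff space $\int D_C$ is
\begin{displaymath}
U^{\int D_C}_{(b,x)} = \bigcup_{v \in U_b} \{v\}\times D(v\leq b)^{-1}(U^{D(b)}_x).
\end{displaymath}

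Next I would compute the minimal open set of $(b,x)$ in the topological space $\int D_T$. By the definition of the topological Grothendieck construction, the basic open sets are the $J_D(b,V)$, and the smallest such set containing $(b,x)$ is obtained by taking $V$ to be the minimal open neighbourhood $U^{D(b)}_x$ of $x$ in the Alexandroff space $D(b)$. This gives
\begin{displaymath}
J_D\bigl(b,U^{D(b)}_x\bigr) = \bigcup_{v \in U_b} \{v\}\times D(v\leq b)^{-1}\bigl(U^{D(b)}_x\bigr),
\end{displaymath}
which is visibly the same set as above. To complete the argument I would verify that $J_D(b,U^{D(b)}_x)$ really is the minimal open neighbourhood of $(b,x)$: any basic open set $J_D(b',V')$ containing $(b,x)$ forces $b \in U_{b'}$ and $x \in D(b\leq b')^{-1}(V')$, and using the nested inclusion of basic sets already recorded in the definition one shows $J_D(b,U^{D(b)}_x) \subseteq J_D(b',V')$. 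Hence the minimal open sets agree for every point, so the two Alexandroff topologies coincide and the spaces are equal.

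I do not expect a serious obstacle here; the statement is essentially a bookkeeping verification that unwinds both definitions to the same formula. The one point requiring a little care is the identification $y \in D(v\leq b)^{-1}(U^{D(b)}_x) \iff D(v\leq b)(y) \leq x$, which relies on the correspondence between the order-theoretic and topological descriptions of $D(v\leq b)$ as the same map under the isomorphism $\ATop \cong \ord$, together with the fact that minimal open sets are precisely the principal lower sets $U_x = \{z \st z \leq x\}$.
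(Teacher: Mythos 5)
Your proposal is correct and follows essentially the same route as the paper: the central identity $J_{D_T}(b,U^{D(b)}_x)=U^{\int D_C}_{(b,x)}$ and the verification that this set sits inside every basic open set containing $(b,x)$ are exactly the two computations the paper performs, merely reorganized as "the minimal open neighbourhoods agree" rather than as two inclusions of topologies. No gaps.
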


\begin{proof}
Clearly, the underlying sets of $\int D_C$ and $\int D_T$ coincide. Note that for all $b\in B$ and $x\in D(b)$, $J_{D_T}(b,U^{D_T(b)}_x)=U^{\int D_C}_{(b,x)}$. Thus, every open subset of $\int D_C$ is an open subset of $\int D_T$. On the other hand, for every $b\in B$, for every open subset $V$ of $D(b)$ and for every $(\beta,x)\in J_{D_T}(b,V)$ we have that $(\beta,x)\in U^{\int D_C}_{(\beta,x)}=J_{D_T}(\beta,U^{D_T(\beta)}_x)\subseteq J_{D_T}(b,V)$ since $\beta\leq b$ and $U^{D_T(\beta)}_x\subseteq D_T(\beta\leq b)^{-1}(V)$. Thus, for every $b\in B$ and for every open subset $V$ of $D(b)$ the set $J_{D_T}(b,V)$ is an open subset of $\int D_C$. The result follows.
\end{proof}

We give now some simple examples of the topological Grothendieck construction.

\begin{ex}\ 

(1) Let $B$ a non-empty Alexandroff space and let $F$ be a topological space. Let $C_F\colon B\to \Top$ be the constant functor with value $F$. Then $\int C_F=B\times F$ (with the product topology) and $\pi^{C_F}_B\colon B\times F\to B$ is the canonical projection.

(2) Let $X$ be an indiscrete topological space and let $B$ be an Alexandroff space. Let $D\colon B\to \Top$ be a functor such that $D(b)=X$ for all $b\in B$. Then the space $\int D$ is $B\times X$ with the product topology.

(3) Let $X$ be the topological space whose underlying set is $\{a,b,c\}$ and whose topology is $\T_X=\{\varnothing,\{b,c\},X\}$. Let $f_1\colon X\to X$ be the identity map, let $f_2\colon X\to X$ be defined by $f_2(a)=a$, $f_2(b)=b$ and $f_2(c)=b$ and let $f_3\colon X\to X$ be the constant map with value $b$. Let $\Sp$ be the Sierpinski space. For $j\in\{1,2,3\}$ let $F_j\colon\Sp\to\Top$ be the functor defined by $F_j(0)=F_j(1)=X$ and $F_j(0\leq 1)=f_j$.

Then the spaces $\int F_1$ and $\int F_2$ coincide with the space $\Sp\times X$ with the product topology. On the other hand, the space $\int F_3$ is the set $\Sp\times X$ with topology 
\begin{displaymath}
\T=\{\varnothing,\{(0,b),(0,c)\},\{(0,a),(0,b),(0,c)\},\{(0,a),(0,b),(0,c),(1,b),(1,c)\},\Sp\times X\}. 
\end{displaymath}
In particular, $\int F_1$ and $\int F_3$ are not homeomorphic.

(4) Let $X$ be a topological space and let $\ast$ be the singleton. Let $D\colon\Sp\to X$ be the functor defined by $D(0)=X$ and $D(1)=\ast$. Then $\int D$ is the non-Hausdorff cone of $X$.

(5) Let $X$ be a topological space and, as in the previous item, let $\ast$ be the singleton. Let $B$ be the topological space whose underlying set is $\{a,b,c\}$ and whose topology is $\{\varnothing,\{a\},\{a,b\},\{a,c\},\{a,b,c\}\}$. Note that the Hasse diagram of the poset associated to $B$ is
\begin{center}
\begin{tikzpicture}
\tikzstyle{every node}=[font=\footnotesize]
\draw (0.5,0) node(a){$\bullet$} node[below=1]{$a$};
\draw (0,1) node(b){$\bullet$} node[above=1]{$b$};
\draw (1,1) node(c){$\bullet$} node[above=1]{$c$};
\draw (a)--(b);
\draw (a)--(c);
\end{tikzpicture}
\end{center}

Let $D\colon B\to \Top$ be the functor defined by $D(a)=X$ and $D(b)=D(c)=\ast$. Then $\int D$ is the non-Hausdorff suspension of $X$.
\end{ex}

In examples (2) and (3) we can perceive a particular behaviour of the topological Grothendieck construction of a functor $D\colon B\to \Top$ when the spaces $D(b)$ do not satisfy the T$_0$ separation axiom. This is made more explicit in lemma \ref{lemma_Kolmogorov_quotient} and proposition \ref{prop_Kolmogorov_quotient}.

Recall that $\Ko$ denotes the the Kolmogorov quotient functor $\Top\to\Top_0$.

\begin{lemma} \label{lemma_Kolmogorov_quotient}
Let $X$ and $Y$ be topological spaces and let $f,g\colon X\to Y$ be continuous maps. Then $\Ko(f)=\Ko(g)$ if and only if $f^{-1}(V)=g^{-1}(V)$ for every open subset $V$ of $Y$.
\end{lemma}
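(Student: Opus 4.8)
The plan is to reduce the equality $\Ko(f)=\Ko(g)$ of the induced maps between Kolmogorov quotients to the pointwise condition $\sigma_Y f=\sigma_Y g$, and then to translate that pointwise condition, via the definition of topological indistinguishability, into the stated condition on preimages. Both translations are direct unwindings of the definitions recalled above, so the whole argument amounts to concatenating two equivalences, which simultaneously settles both implications of the "if and only if".

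First I would establish that $\Ko(f)=\Ko(g)$ holds if and only if $\sigma_Y f=\sigma_Y g$. The defining property of the functor $\Ko$ supplies the naturality equations $\Ko(f)\sigma_X=\sigma_Y f$ and $\Ko(g)\sigma_X=\sigma_Y g$. Since $\sigma_X$ is a quotient map it is in particular surjective, and so two maps out of $\Ko X$ coincide exactly when their precompositions with $\sigma_X$ coincide. Chaining these facts gives $\Ko(f)=\Ko(g)\Leftrightarrow \Ko(f)\sigma_X=\Ko(g)\sigma_X\Leftrightarrow \sigma_Y f=\sigma_Y g$.

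Next I would read $\sigma_Y f=\sigma_Y g$ pointwise: for each $x\in X$ the equation $\sigma_Y(f(x))=\sigma_Y(g(x))$ says exactly that $f(x)$ and $g(x)$ are identified by $\sigma_Y$, that is, that they are topologically indistinguishable in $Y$, which by definition means $f(x)\in V\Leftrightarrow g(x)\in V$ for every open $V\subseteq Y$. Finally I would observe that the family of biconditionals "$f(x)\in V\Leftrightarrow g(x)\in V$ for all $x$ and all open $V$" is the very same statement as "$f^{-1}(V)=g^{-1}(V)$ for all open $V$", read by quantifying first over $V$: fixing an open $V$, one has $x\in f^{-1}(V)\Leftrightarrow f(x)\in V\Leftrightarrow g(x)\in V\Leftrightarrow x\in g^{-1}(V)$, whence $f^{-1}(V)=g^{-1}(V)$, and conversely. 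Concatenating these equivalences yields the lemma.

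I do not expect a genuine obstacle here; the only step requiring a moment of care is the first one, where surjectivity of the quotient map $\sigma_X$ must be invoked in order to pass from equality after precomposition with $\sigma_X$ to equality of the induced maps $\Ko(f)$ and $\Ko(g)$ themselves. Everything else is a mechanical rewriting of the definitions of $\Ko$ and of topological indistinguishability.
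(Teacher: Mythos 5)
Your proof is correct and takes essentially the same route as the paper's: both reduce $\Ko(f)=\Ko(g)$ to $\sigma_Y f=\sigma_Y g$ via the naturality equations and the surjectivity of $\sigma_X$, and then unwind topological indistinguishability of $f(x)$ and $g(x)$ into the condition $f^{-1}(V)=g^{-1}(V)$ for all open $V$. The only cosmetic difference is in the converse, where the paper passes through preimages of open subsets of $\Ko Y$ and invokes the T$_0$ property of the quotient, whereas you appeal directly to the definition of the identification $\sim$ in $Y$; both steps are immediate.
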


\begin{proof}
Let $\sigma_X\colon X\to \Ko X$ and $\sigma_Y\colon Y\to \Ko Y$ be the quotient maps. 

Let $V$ be an open subset of $Y$. For each $x\in X$ we have that $\Ko(f)(\sigma_X(x))=\Ko(g)(\sigma_X(x))$, that is, $\sigma_Y(f(x))=\sigma_Y(g(x))$. Therefore,
\begin{displaymath}
x\in f^{-1}(V) \Leftrightarrow f(x)\in V \Leftrightarrow g(x) \in V \Leftrightarrow x\in g^{-1}(V).
\end{displaymath}
Thus, $f^{-1}(V)=g^{-1}(V)$.

To prove the converse, note that for each open subset $W$ of $\Ko Y$ we have that 
\begin{align*}
(\Ko(f)\sigma_X)^{-1}(W) & = (\sigma_Y f)^{-1}(W) = f^{-1}(\sigma_Y ^{-1}(W)) = g^{-1}(\sigma_Y ^{-1}(W)) = (\sigma_Y g)^{-1}(W) = \\ 
&= (\Ko(g)\sigma_X)^{-1}(W)
\end{align*}
Thus, for each $x\in X$ and for each open subset $W$ of $\Ko Y$ we have that $\Ko(f)\sigma_X(x)\in W$ if and only if $\Ko(g)\sigma_X(x)\in W$. Since $\Ko Y$ is a T$_0$--space it follows that, for all $x\in X$, $\Ko(f)\sigma_X(x)=\Ko(g)\sigma_X(x)$. Therefore $\Ko(f)=\Ko(g)$.
\end{proof}

\begin{prop} \label{prop_Kolmogorov_quotient}
Let $B$ be an Alexandroff space and let $F,G\colon B\to \Top$ be functors such that $F(b)=G(b)$ for all $b\in B$. If $\Ko F=\Ko G$ then $\int F=\int G$.
\end{prop}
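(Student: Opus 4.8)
The plan is to show that $\int F$ and $\int G$ have the same underlying set and the same generating basis, from which the equality of the two topological spaces follows at once. Since $F(b)=G(b)$ for all $b\in B$, the underlying sets $\bigcup_{b\in B}\{b\}\times F(b)$ and $\bigcup_{b\in B}\{b\}\times G(b)$ literally coincide, so the whole content of the statement is the comparison of the two topologies.

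First I would unpack the hypothesis $\Ko F=\Ko G$. Here $\Ko F$ and $\Ko G$ denote the functors $B\to\Top_0$ obtained by postcomposing $F$ and $G$ with the Kolmogorov quotient functor, so the equality $\Ko F=\Ko G$ says in particular that $\Ko(F(b\leq b'))=\Ko(G(b\leq b'))$ for every arrow $b\leq b'$ of $B$ (on objects the equality is automatic from $F(b)=G(b)$). Applying Lemma \ref{lemma_Kolmogorov_quotient} to the maps $F(b\leq b'),G(b\leq b')\colon F(b)\to F(b')$, this is equivalent to the assertion that $F(b\leq b')^{-1}(V)=G(b\leq b')^{-1}(V)$ for every open subset $V$ of $F(b')=G(b')$.

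The key step is then a direct comparison of the defining bases. Recall that the topology of $\int F$ is generated by the sets $J_F(b,V)=\bigcup_{v\in U_b}\{v\}\times F(v\leq b)^{-1}(V)$, and likewise $\int G$ is generated by the sets $J_G(b,V)=\bigcup_{v\in U_b}\{v\}\times G(v\leq b)^{-1}(V)$. Fix $b\in B$ and an open subset $V$ of $F(b)=G(b)$. Every $v\in U_b$ satisfies $v\leq b$, so the preimage identity from the previous step gives $F(v\leq b)^{-1}(V)=G(v\leq b)^{-1}(V)$; taking the union over $v\in U_b$ yields $J_F(b,V)=J_G(b,V)$. Hence the two bases coincide set for set, and therefore they generate the same topology on the common underlying set, so $\int F=\int G$.

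I do not expect any genuine obstacle here: the only point requiring care is the translation of the functorial equality $\Ko F=\Ko G$ into the pointwise condition on preimages, which is exactly what Lemma \ref{lemma_Kolmogorov_quotient} supplies. Everything else is a straightforward substitution into the definition of the basis $\B$.
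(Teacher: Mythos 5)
Your argument is correct and is exactly the paper's proof, just written out in more detail: the paper likewise notes that the underlying sets coincide and invokes Lemma \ref{lemma_Kolmogorov_quotient} to conclude that the generating sets $J_F(b,V)$ and $J_G(b,V)$ agree. Your explicit unpacking of $\Ko F=\Ko G$ into the preimage identity $F(v\leq b)^{-1}(V)=G(v\leq b)^{-1}(V)$ is precisely the step the paper leaves implicit.
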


\begin{proof}
Since $F(b)=G(b)$ for all $b\in B$, it is clear that the underlying sets of $\int F$ and $\int G$ coincide. And from \ref{lemma_Kolmogorov_quotient} it follows that the topologies of $\int F$ and $\int G$ are the same. Thus, $\int F=\int G$.
\end{proof}

\begin{definition}
Let $B$ be a non-empty Alexandroff space (or equivalently, a non-empty preordered set) and let $D\colon B\to\Top$ be a functor. We define $\pi^{D}_B\colon\int D\to B$ as the canonical projection given by $\pi^{D}_B(b,x)=b$. When there is no risk of confusion the projection $\pi^{D}_B$ will be denoted simply by $\pi_B$.

In the case that $B=\varnothing$, we have that $\int D=\varnothing$ and we define $\pi^{D}_B$ as the empty function.
\end{definition}

\begin{rem}\label{rem_int_constant_functor}
Let $B$ a non-empty Alexandroff space and let $F$ be a topological space. Let $C_F\colon B\to \Top$ be the constant functor with value $F$. 
Then $\pi^{C_F}_B$ is the canonical projection of $B\times F$ onto $F$.
\end{rem}

\begin{prop}
Let $B$ be an Alexandroff space and let $D\colon B\to\Top$ be a functor. Then $\pi_B\colon \int D\to B$ is continuous, and hence an object over $B$.
\end{prop}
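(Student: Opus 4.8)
The plan is to show that the preimage under $\pi_B$ of every open subset of $B$ is open in $\int D$. Since $B$ is an Alexandroff space, its topology is generated by the minimal open sets $U_b=\{v\in B\st v\leq b\}$, so it suffices to check that $\pi_B^{-1}(U_b)$ is open in $\int D$ for each $b\in B$ (the preimage of an arbitrary open set is then a union of such preimages, and $\pi_B^{-1}$ commutes with unions). If $B=\varnothing$ the statement is vacuous, so I would assume $B\neq\varnothing$ and hence that $\pi_B$ is the honest projection $(b,x)\mapsto b$.

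The key computation is to identify $\pi_B^{-1}(U_b)$ in terms of the basic open sets $J(\cdot,\cdot)$ of $\int D$. By definition, $\pi_B^{-1}(U_b)=\bigcup_{v\in U_b}\{v\}\times D(v)$, that is, all pairs $(v,x)$ with $v\leq b$. First I would observe that taking $V=D(b)$ in the definition of $J_D(b,V)$ gives
\begin{displaymath}
J_D(b,D(b))=\bigcup_{v\in U_b}\{v\}\times D(v\leq b)^{-1}(D(b))=\bigcup_{v\in U_b}\{v\}\times D(v),
\end{displaymath}
since each $D(v\leq b)\colon D(v)\to D(b)$ is a map into $D(b)$ and so its preimage of the whole space $D(b)$ is all of $D(v)$. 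Thus $\pi_B^{-1}(U_b)=J_D(b,D(b))$, which is a basic open set and in particular open in $\int D$. This is the crux of the argument.

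Having established this for the minimal open sets, I would finish by writing an arbitrary open subset $W$ of $B$ as the union of the minimal open neighbourhoods of its points, $W=\bigcup_{b\in W}U_b$ (valid because in an Alexandroff space the open sets are exactly the lower sets), whence $\pi_B^{-1}(W)=\bigcup_{b\in W}\pi_B^{-1}(U_b)=\bigcup_{b\in W}J_D(b,D(b))$ is a union of open sets and therefore open. This proves continuity of $\pi_B$, and being a continuous map to $B$ it is by definition an object of $\Top/B$.

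I do not anticipate a genuine obstacle here; the statement is routine once the right basic open set is chosen. The only point requiring a little care is the bookkeeping in the identification $\pi_B^{-1}(U_b)=J_D(b,D(b))$, namely recognizing that the preimage condition $D(v\leq b)^{-1}(V)$ degenerates to the entire fiber $D(v)$ precisely when $V$ is taken to be the full space $D(b)$. Everything else is formal manipulation with unions and with the Alexandroff description of open sets as lower sets.
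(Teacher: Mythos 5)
Your proof is correct and follows exactly the paper's argument: the paper's one-line proof is precisely the identity $\pi_B^{-1}(U_b)=J(b,D(b))$, which is your key computation. The additional bookkeeping about unions of minimal open sets is a standard elaboration of "the result follows."
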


\begin{proof}
Since $\pi_B^{-1}(U_b)=J(b,D(b))$ for every $b\in B$, the result follows.
\end{proof}

\begin{rem} \label{rem_int_is_functor}
Let $B$ be an Alexandroff space, let $D,E\colon B\to\Top$ be functors and let $\theta\colon D\Rightarrow E$ be a natural transformation. If $b\in B$ and $V$ is an open subset of $E(b)$ then, for all $(\beta,x)\in \int D$,
\begin{align*}
(\beta,\theta_{\beta}(x))\in J_E(b,V) & \Leftrightarrow \beta\leq b \ \land \  E(\beta\leq b)\theta_{\beta}(x) \in V 
\Leftrightarrow \beta\leq b \ \land\  \theta_{b}D(\beta\leq b)(x) \in V \Leftrightarrow \\
& \Leftrightarrow \beta\leq b \ \land\  D(\beta\leq b)(x) \in \theta_{b}^{-1}(V)
\Leftrightarrow (\beta,x)\in J_D(b,\theta_{b}^{-1}(V)).
\end{align*}
It follows that the function $\theta_*\colon\int D\to\int E$ defined by $\theta_*(b,x)=(b,\theta_b(x))$ is continuous and hence a map over $B$ from $\pi^D_B$ to $\pi^E_B$. 

It is easy to see then that the construction $\int$ defines a functor $\int\colon\Top^{B}\to\Top/B$, where $\Top^{B}$ denotes the category of functors from $B$ to $\Top$. In particular, with the previous notations, if $\theta$ is a natural isomorphism then $\theta_*$ is an isomorphism of maps over $B$ from $\pi^D_B$ to $\pi^E_B$.
\end{rem}

The following example shows that the functor $\int\colon\Top^{B}\to\Top/B$ is not essentially surjective.

\begin{ex} \label{ex_topological_Grothendieck_construction_is_not_essentially_surjective}
Let $B$ be the Sierpinski space $\Sp$ (as defined in subsection \ref{subsec_notation}, with unique non-trivial open subset $\{0\}$). Let $E$ be the topological space whose underlying set is $\{a,b,c\}$ and whose topology is $\{\varnothing,\{a\},\{a,b\},\{a,c\},\{a,b,c\}\}$. Let $p\colon E\to B$ be the continuous map given by $p(a)=0$ and $p(b)=p(c)=1$.

Suppose that there exists a functor $D\colon B\to \Top$ such that the projection $\pi_B^D$ is isomorphic to $p$ as maps over $B$. Let $\alpha\colon E\to \int D$ be a homeomorphism such that $\pi_B^D \alpha = p$. Clearly, $D(0)$ is a singleton and $D(1)$ is a topological space with cardinality 2. Moreover, applying \ref{rem_embedding} we obtain that $\alpha$ induces a homeomorphism $\{b,c\}\to D(1)$ and hence $D(1)$ is a discrete space. Let $x$ be the only element of $D(0)$, let $y=D(0\leq 1)(x)$ and let $z$ be the only element of $D(1)-\{y\}$. Note that $J_D(1,\{z\})=\{z\}$ and hence $\{z\}$ is an open subset of $\int D$. But $\alpha^{-1}(z)\in\{b,c\}$ and neither $\{b\}$ nor $\{c\}$ are open subsets of $E$, which entails a contradiction.

Therefore there does not exist any functor $D\colon B\to \Top$ such that the projection $\pi_B^D$ is isomorphic to $p$ as maps over $B$. In particular, the functor $\int\colon\Top^{B}\to\Top/B$ is not essentially surjective.
\end{ex}

The following theorem gives a characterization of the functors which satisfy that the canonical projection maps associated to their topological Grothendieck constructions are fiber bundles.

\begin{theo} \label{theo_characterization_fiber_bundles}
Let $B$ be an Alexandroff space and let $F$ be a topological space. Let $D\colon B\to \Top$ be a functor. For each $b\in B$ let $\sigma_b\colon D(b) \to \Ko D(b)$ be the quotient map.

Then $\pi_B\colon \int D\to B$ is a fiber bundle over $B$ with fiber $F$ if and only if each of the following holds.
\begin{enumerate}[(a)]
\item $\Ko D\colon B\to \Top_0$ is a morphism-inverting functor.
\item For all $b\in B$, $D(b)$ is homeomorphic to $F$.
\item For all $b_1,b_2\in B$ such that $b_1\leq b_2$ and for all $y\in \Ko D(b_2)$, there exists a bijective function $f_{b_1,b_2,y}\colon \sigma_{b_1}^{-1}((\Ko D(b_1\leq b_2))^{-1}(y))\to \sigma_{b_2}^{-1}(y)$.
\end{enumerate}
\end{theo}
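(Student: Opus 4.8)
\emph{The plan} is to exploit that the minimal open sets $U_b$ form a basis of $B$ in order to reduce the fiber bundle condition to a local statement at each point. Since $U_b$ is contained in every open neighbourhood of $b$, the restriction of any local trivialization around $b$ is a trivialization over $U_b$; hence $\pi_B$ is a fiber bundle with fiber $F$ if and only if, for every $b\in B$, the restriction $\pi_B^{-1}(U_b)\to U_b$ is isomorphic as an object over $U_b$ to the projection $U_b\times F\to U_b$. Observing that $\pi_B^{-1}(U_b)=\int(D|_{U_b})$ and that $U_b$ has $b$ as a maximum, the whole theorem reduces to characterizing, separately for each $b$, when $\int(D|_{U_b})\cong U_b\times F$ over $U_b$.

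The central step is to prove that such a trivialization exists if and only if there are homeomorphisms $\phi_v\colon F\to D(v)$, one for each $v\in U_b$, satisfying
\[
\Ko(\phi_v)=\Ko(D(u\le v))\,\Ko(\phi_u)\quad\text{for all }u\le v\text{ in }U_b;
\]
denote this relation by $(\star)$. Any isomorphism $H$ over $U_b$ has the form $H(v,w)=(v,\phi_v(w))$, and \ref{rem_embedding} forces each $\phi_v$ to be a homeomorphism $F\to D(v)$; this already yields condition (b). The content of the step is that $H$ is a homeomorphism precisely when $(\star)$ holds, and I would establish this by computing the preimages $H^{-1}(J(b',V))$ and the images $H(O\times W)$ of basic open sets. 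The delicate point, and the main obstacle of the proof, is that $F$ need not be Alexandroff, so one must use the minimal neighbourhoods $U_v$ of the base to recognize when a set of the form $\bigcup_v\{v\}\times A_v$ is open in $U_b\times F$. This analysis shows that continuity of $H$ amounts to the inclusions $\phi_v^{-1}(V)\subseteq\phi_u^{-1}D(u\le v)^{-1}(V)$ and that openness of $H$ yields the reverse inclusions; by \ref{lemma_Kolmogorov_quotient} the resulting equalities of preimages are exactly $(\star)$.

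For the realization I would record the elementary fact that, because the topology of any space $X$ is initial with respect to $\sigma_X$ and $\Ko X$ is T$_0$, a bijection $\phi\colon F\to D(v)$ satisfying $\sigma_v\phi=\psi\sigma_F$ and lifting a homeomorphism $\psi\colon\Ko F\to\Ko D(v)$ is automatically a homeomorphism, and that such $\phi$ exists if and only if the fibres of $\sigma_F$ and of $\sigma_v$ have matching cardinalities under $\psi$. Granting (a), (b) and (c), I would fix a homeomorphism $\phi_b\colon F\to D(b)$ and set $\Ko(\phi_v):=\Ko(D(v\le b))^{-1}\Ko(\phi_b)$, which is well defined because (a) makes every $\Ko(D(v\le b))$ invertible; a short computation then shows this family satisfies $(\star)$. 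Under (a), condition (c) reads $|\sigma_v^{-1}(x)|=|\sigma_b^{-1}(\Ko(D(v\le b))(x))|$, which provides exactly the cardinality matching needed to lift each $\Ko(\phi_v)$ to a homeomorphism $\phi_v$; these lifts may be chosen independently, since $(\star)$ constrains only the $\Ko(\phi_v)$. This produces the required trivialization over each $U_b$.

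Conversely, given the trivializations, $(\star)$ holds for every comparable pair $u\le v$ (using the chart over $U_v$), so $\Ko(D(u\le v))=\Ko(\phi_v)\Ko(\phi_u)^{-1}$ is an isomorphism and (a) follows. For (c), take $b_1\le b_2$ and $y\in\Ko D(b_2)$; by (a) the set $(\Ko D(b_1\le b_2))^{-1}(y)$ is a single class $x$, and setting $c=\Ko(\phi_{b_2})^{-1}(y)$ the relation $(\star)$ gives $\Ko(\phi_{b_1})(c)=x$, whence $\phi_{b_2}\phi_{b_1}^{-1}$ restricts to a bijection $\sigma_{b_1}^{-1}(x)\to\sigma_{b_2}^{-1}(y)$, which is precisely (c). I expect the genuinely technical work to be confined to the open/continuous computation of the second paragraph; the two implications between $(\star)$ and the conditions (a)--(c) are then essentially bookkeeping with Kolmogorov quotients and cardinalities of fibres.
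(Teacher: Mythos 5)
Your proposal is correct and follows essentially the same route as the paper: both arguments reduce local triviality to the minimal open sets $U_b$, extract from a trivialization the fibrewise homeomorphisms and show via the basic-open-set computation that comparable fibres have equal preimages of open sets (hence equal Kolmogorov quotients, which is your condition $(\star)$ and the paper's identity $D(b_1\leq b_2)^{-1}(V)=\alpha_1^{-1}\alpha_2(V)$), and conversely rebuild the trivialization from a fixed homeomorphism $D(b)\to F$ together with the cardinality-matching bijections of $\sigma$-fibres from (c). Your packaging through $(\star)$ and the observation that a bijection covering a homeomorphism of Kolmogorov quotients is automatically a homeomorphism is a mild reorganization of the paper's direct continuity checks, not a different method.
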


\begin{proof}
Suppose first that $\pi_B\colon \int D\to B$ is a fiber bundle over $B$ with fiber $F$. Clearly, item ($b$) holds since, for each $b\in B$, $\pi_B^{-1}(b)=\{b\}\times D(b)$.

Let $b_1,b_2\in B$ such that $b_1\leq b_2$. We will prove that $\Ko D(b_1\leq b_2)$ is a homeomorphism and that for all $y\in \Ko D(b_2)$, there exists a bijective function $f_{b_1,b_2,y}$ as in item ($c$). 

Let $U$ be a trivializing neighbourhood of $b_2$ and let $\varphi_U\colon \pi_B^{-1}(U)\to U\times F$ be a trivialization map. Clearly, the map $\varphi_U$ can be restricted to a homeomorphism $\varphi\colon \pi_B^{-1}(\{b_1,b_2\})\to \{b_1,b_2\}\times F$. In addition, there exist homeomorphisms $\alpha_1\colon D(b_1)\to F$ and $\alpha_2\colon D(b_2)\to F$ such that $\varphi(b_j,x)=(b_j,\alpha_j(x))$ for $j\in\{1,2\}$ and for all $x\in D(b_j)$.

We will prove now that for every open subset $V$ of $D(b_2)$, $D(b_1\leq b_2)^{-1}(V)= \alpha_1^{-1}\alpha_2(V)$. Let $V$ be an open subset of $D(b_2)$. Let $E=\pi_B^{-1}(\{b_1,b_2\})$. Since $J(b_2,V)\cap E$ is an open subset of $E$, we obtain that $\varphi(J(b_2,V)\cap E)$ is an open subset of $\{b_1,b_2\}\times F$. Note that
\begin{align*}
\varphi(J(b_2,V)\cap E) &= \varphi (\{b_1\}\times D(b_1\leq b_2)^{-1}(V) \cup \{b_2\}\times V) = \\
&= \{b_1\}\times \alpha_1(D(b_1\leq b_2)^{-1}(V)) \cup \{b_2\}\times \alpha_2(V).
\end{align*}
Since $\varphi(J(b_2,V)\cap E)$ is an open subset of $\{b_1,b_2\}\times F$ and $b_1\leq b_2$ we obtain that $\alpha_2(V) \subseteq \alpha_1(D(b_1\leq b_2)^{-1}(V))$ and hence $\alpha_1^{-1}\alpha_2(V) \subseteq D(b_1\leq b_2)^{-1}(V)$. On the other hand, let $x\in D(b_1\leq b_2)^{-1}(V)$. Let $x'=D(b_1\leq b_2)(x)$. Note that $\varphi(b_2,x')=(b_2,\alpha_2(x'))\in\{b_1,b_2\}\times \alpha_2(V)$. Since $\alpha_2$ is a homeomorphism we obtain that $\varphi^{-1}(\{b_1,b_2\}\times \alpha_2(V))$ is an open neighbourhood of $(b_2,x')$ in $E$. Thus, there exists an open subset $W$ of $D(b_2)$ such that $(b_2,x')\in J(b_2,W)\cap E\subseteq \varphi^{-1}(\{b_1,b_2\}\times \alpha_2(V))$. Note that $(b_1,x)\in J(b_2,W)$ since $D(b_1\leq b_2)(x)=x'\in W$. Thus, $\varphi(b_1,x)\in \{b_1,b_2\}\times \alpha_2(V)$ and hence $\alpha_1(x)\in \alpha_2(V)$. Then $x\in \alpha_1^{-1}\alpha_2(V)$. Therefore, $D(b_1\leq b_2)^{-1}(V)= \alpha_1^{-1}\alpha_2(V)$.

Thus, applying \ref{lemma_Kolmogorov_quotient} we obtain that $\Ko D(b_1\leq b_2) = \Ko (\alpha_2^{-1}\alpha_1)$ and since $\alpha_1$ and $\alpha_2$ are homeomorphisms it follows that $\Ko D(b_1\leq b_2)$ is a homeomorphism as well. 

Now, let $y\in \Ko D(b_2)$ and let $y'=(\Ko D(b_1\leq b_2))^{-1}(y)$. Hence, $y'=(\Ko (\alpha_2^{-1}\alpha_1))^{-1}(y)$ and since $\sigma_{b_2}\alpha_2^{-1}\alpha_1=\Ko(\alpha_2^{-1}\alpha_1)\sigma_{b_1}$ and $\alpha_2^{-1}\alpha_1$ is a homeomorphism we obtain that $\alpha_2^{-1}\alpha_1(\sigma_{b_1}^{-1}(y'))=\sigma_{b_2}^{-1}(y)$. Thus we may define $f_{b_1,b_2,y}\colon \sigma_{b_1}^{-1}(y')\to \sigma_{b_2}^{-1}(y)$ as a restriction of $\alpha_2^{-1}\alpha_1$.

Conversely, suppose that $(a)$, $(b)$ and $(c)$ hold. Let $b\in B$ and let $h\colon D(b)\to F$ be a homeomorphism. For each $\beta \in B$ such that $\beta\leq b$ and for each $x\in D(\beta)$ let $y_{\beta,x}=\Ko D(\beta\leq b)(\sigma_\beta(x)) \in \Ko D(b)$. Let $\varphi\colon \pi_B^{-1}(U_b)\to U_b\times F$ and $\phi\colon U_b\times F\to \pi_B^{-1}(U_b)$ be defined by
\begin{displaymath}
\varphi(\beta,x)=(\beta,hf_{\beta,b,y_{\beta,x}}(x))
\end{displaymath}
for every $(\beta,x)\in \pi_B^{-1}(U_b)$, and
\begin{displaymath}
\phi(\beta,z)=(\beta,f_{\beta,b,\sigma_b(h^{-1}(z))}^{-1}(h^{-1}(z)))
\end{displaymath}
for every $(\beta,z)\in U_b\times F$, respectively.

Clearly, if $p\colon U_b\times F\to U_b$ is the projection map then the restriction $\pi_B|\colon \pi_B^{-1}(U_b)\to U_b $ of $\pi_B$ is equal to the composition $p_{U_b}\circ \varphi$. In addition, for each $(\beta,x)\in \pi_B^{-1}(U_b)$ we have that
\begin{align*}
\phi (\varphi(\beta,x))&=\phi(\beta,hf_{\beta,b,y_{\beta,x}}(x))=(\beta,f_{\beta,b,\sigma_b(f_{\beta,b,y_{\beta,x}}(x))}^{-1}(f_{\beta,b,y_{\beta,x}}(x)))= \\ &= (\beta,f_{\beta,b,y_{\beta,x}}^{-1}(f_{\beta,b,y_{\beta,x}}(x))) =(\beta,x).
\end{align*}
On the other hand, for each $(\beta,z)\in U_b\times F$, let $x_{\beta,z}=f_{\beta,b,\sigma_b(h^{-1}(z))}^{-1}(h^{-1}(z))\in D(\beta)$. Note that $\sigma_\beta(x_{\beta,z}) = \Ko D (\beta \leq b)^{-1}(\sigma_b(h^{-1}(z)))$ and hence $y_{\beta,x_{\beta,z}}=\sigma_b(h^{-1}(z))$. Thus we have that
\begin{align*}
\varphi(\phi(\beta,z)) &= \varphi(\beta,x_{\beta,z}) =(\beta,hf_{\beta,b,\sigma_b(h^{-1}(z))}(x_{\beta,z}))=(\beta,h(h^{-1}(z)))=(\beta,z).
\end{align*}
Hence, $\varphi$ and $\phi$ are mutually inverse maps.

We will prove now that $\varphi$ is a continuous map. Let $v\in U_b$ and let $W$ be an open subset of $F$. Let $(\beta,x)\in \pi_B^{-1}(U_b)$. Then
\begin{align*}
(\beta,x)\in \varphi^{-1}(U_v\times W) & \Leftrightarrow 
\beta \leq v \,\land\, f_{\beta,b,y_{\beta,x}}(x)\in h^{-1}(W) \Leftrightarrow \\
& \Leftrightarrow \beta \leq v \,\land\, \sigma_b(f_{\beta,b,y_{\beta,x}}(x))\in \sigma_b(h^{-1}(W)) \Leftrightarrow \\
& \Leftrightarrow \beta \leq v \,\land\, y_{\beta,x}\in \sigma_b(h^{-1}(W)) \Leftrightarrow \\
& \Leftrightarrow \beta \leq v \,\land\, \Ko D(\beta\leq b)(\sigma_\beta(x))\in \sigma_b(h^{-1}(W)) \Leftrightarrow \\
& \Leftrightarrow \beta \leq v \,\land\, \Ko D(v\leq b)\Ko D(\beta\leq v)\sigma_\beta(x)\in \sigma_b(h^{-1}(W)) \Leftrightarrow \\
& \Leftrightarrow \beta \leq v \,\land\, \Ko D(v\leq b)\sigma_v D(\beta\leq v)(x)\in \sigma_b(h^{-1}(W)) \Leftrightarrow \\
& \Leftrightarrow \beta \leq v \,\land\, D(\beta\leq v)(x)\in (\Ko D(v\leq b)\sigma_v)^{-1}\sigma_b(h^{-1}(W)) \Leftrightarrow \\
& \Leftrightarrow (\beta,x) \in J(v,(\Ko D(v\leq b)\sigma_v)^{-1}(\sigma_b(h^{-1}(W)))).
\end{align*}
Thus, $\varphi^{-1}(U_v\times W)=J(v,(\Ko D(v\leq b)\sigma_v)^{-1}(\sigma_b(h^{-1}(W))))$. It follows that $\varphi$ is a continuous map.

Now we will prove that $\phi$ is a continuous map. Let $v\leq b$ and let $W$ be an open subset of $D(v)$. Let $(\beta,z)\in U_b\times F$ and let $x_{\beta,z}=f_{\beta,b,\sigma_b(h^{-1}(z))}^{-1}(h^{-1}(z))$. Then
\begin{align*}
(\beta,z)\in\phi^{-1}(J(v,W)) & \Leftrightarrow \beta\leq v \,\land\, D(\beta\leq v)(x_{\beta,z})\in W \Leftrightarrow \\
& \Leftrightarrow \beta\leq v \,\land\, \sigma_v D(\beta\leq v)(x_{\beta,z})\in \sigma_v(W) \Leftrightarrow \\
& \Leftrightarrow \beta\leq v \,\land\,  \Ko D(\beta\leq v)\sigma_\beta(x_{\beta,z})\in \sigma_v(W) \Leftrightarrow \\
& \Leftrightarrow \beta\leq v \,\land\,  \Ko D(v\leq b)^{-1}\Ko D(\beta\leq b)\sigma_\beta(x_{\beta,z})\in \sigma_v(W) \Leftrightarrow \\
& \Leftrightarrow \beta\leq v \,\land\,  \Ko D(v\leq b)^{-1}\sigma_b(h^{-1}(z))\in \sigma_v(W) \Leftrightarrow \\
& \Leftrightarrow \beta\leq v \,\land\,  \sigma_b(h^{-1}(z))\in \Ko D(v\leq b)\sigma_v(W) \Leftrightarrow \\
& \Leftrightarrow (\beta,z)\in U_v\times h(\sigma_b^{-1}(\Ko D(v\leq b)\sigma_v(W))).
\end{align*}
Thus, $\phi^{-1}(J(v,W))=U_v\times h(\sigma_b^{-1}(\Ko D(v\leq b)\sigma_v(W)))$. It follows that $\phi$ is continuous.

Therefore, $\pi_B\colon \int D\to B$ is a fiber bundle over $B$ with fiber $F$.
\end{proof}

\begin{coro} \label{coro_int_D_is_fiber_bundle}
Let $B$ be a connected non-empty Alexandroff space and let $b_0\in B$. Let $D\colon B\to \Top$ be a morphism-inverting functor. Then $\pi_B\colon \int D\to B$ is a fiber bundle over $B$ with fiber $D(b_0)$.
\end{coro}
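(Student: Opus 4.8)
The plan is to apply Theorem~\ref{theo_characterization_fiber_bundles} with $F=D(b_0)$ and to verify its three conditions (a), (b) and (c). Conditions (a) and (c) concern only comparable pairs $b_1\leq b_2$ and follow directly from the hypothesis that $D$ is morphism-inverting, together with the naturality of the quotient maps $\sigma$; condition (b) is the only genuinely global statement, and it is precisely there that the connectedness of $B$ enters. For condition (a), since $D$ is morphism-inverting, for every arrow $b_1\leq b_2$ in $B$ the map $D(b_1\leq b_2)$ is a homeomorphism; as $\Ko$ is a functor it preserves isomorphisms, so $\Ko D(b_1\leq b_2)=\Ko(D(b_1\leq b_2))$ is a homeomorphism as well, and hence $\Ko D$ is morphism-inverting.

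For condition (b), I would fix $b\in B$ and use the description of connected components of an Alexandroff space recalled in the preliminaries: since $B$ is connected there exist $z_0,\ldots,z_n\in B$ with $z_0=b_0$, $z_n=b$ and $z_{i-1}$ comparable with $z_i$ for each $i$. For each such $i$ the comparison arrow (in whichever direction it points) is sent by the morphism-inverting functor $D$ to a homeomorphism, so $D(z_{i-1})$ is homeomorphic to $D(z_i)$. Composing these homeomorphisms yields a homeomorphism $D(b_0)\to D(b)$, whence $D(b)$ is homeomorphic to $F=D(b_0)$.

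For condition (c), fix $b_1\leq b_2$ and $y\in\Ko D(b_2)$, and set $y'=(\Ko D(b_1\leq b_2))^{-1}(y)$, which is well defined by (a). I would show that $D(b_1\leq b_2)$ restricts to the required bijection $f_{b_1,b_2,y}\colon\sigma_{b_1}^{-1}(y')\to\sigma_{b_2}^{-1}(y)$. The key ingredient is the commuting square $\sigma_{b_2}D(b_1\leq b_2)=\Ko D(b_1\leq b_2)\sigma_{b_1}$ provided by the naturality of $\sigma$: it guarantees that $D(b_1\leq b_2)$ sends $\sigma_{b_1}^{-1}(y')$ into $\sigma_{b_2}^{-1}(y)$; injectivity of $D(b_1\leq b_2)$ then gives injectivity of the restriction, and surjectivity of both $D(b_1\leq b_2)$ and $\Ko D(b_1\leq b_2)$ gives surjectivity of the restriction.

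I do not anticipate any substantial obstacle, as the characterization theorem carries the technical weight and each verification is routine. The one point requiring genuine care is condition (b): connectedness is indispensable there, since over distinct connected components the fibers could otherwise fail to be mutually homeomorphic, and then no single fiber $F$ would exist.
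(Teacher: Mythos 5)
Your proposal is correct and follows exactly the paper's own route: apply Theorem~\ref{theo_characterization_fiber_bundles}, get (a) from functoriality of $\Ko$, get (b) from connectedness via a chain of comparable elements, and get (c) by restricting the homeomorphism $D(b_1\leq b_2)$ using the naturality square $\sigma_{b_2}D(b_1\leq b_2)=\Ko D(b_1\leq b_2)\sigma_{b_1}$. You simply spell out the routine verifications in more detail than the paper does.
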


\begin{proof}
We will prove that items ($a$), ($b$) and ($c$) of \ref{theo_characterization_fiber_bundles} hold. Clearly ($a$) holds since $D$ is a morphism-inverting functor. In addition, since $B$ is connected and $D$ is morphism-inverting we obtain that ($b$) also holds.

Now, let $b_1,b_2\in B$ such that $b_1\leq b_2$ and let $y\in \Ko D(b_2)$. Let $\sigma_{b_1}\colon D(b_1)\to \Ko D(b_1)$ and $\sigma_{b_2}\colon D(b_2)\to \Ko D(b_2)$ be the quotient maps. Since $\sigma_{b_2}D(b_1\leq b_2)=\Ko D(b_1\leq b_2)\sigma_{b_1}$ and $D(b_1\leq b_2)$ is a homeomorphism it follows that $D(b_1\leq b_2)$ induces a bijective function $\sigma_{b_1}^{-1}((\Ko D(b_1\leq b_2))^{-1}(y))\to \sigma_{b_2}^{-1}(y)$. Thus, ($c$) holds.

Therefore, by \ref{theo_characterization_fiber_bundles}, $\pi_B\colon \int D\to B$ is a fiber bundle over $B$ with fiber $D(b_0)$.
\end{proof}

Note that the previous result does not hold if $B$ is not connected since the spaces $D(b)$, $b\in B$, may not be homeomorphic. However, we have the following version of the previous corollary when the Alexandroff space $B$ is not connected. Its proof is similar to that of \ref{coro_int_D_is_fiber_bundle} and will be omitted. 

\begin{coro} \label{coro_functor_to_Aut_F_fiber_bundle}
Let $B$ be an Alexandroff space and let $F$ be a topological space. Let $C\colon B\to \aut(F)$ be a functor and let $\iota\colon\aut(F)\to\Top$ be the inclusion functor. Then the map $\pi^{\iota C}_B\colon \int \iota C\to B$ is a fiber bundle with fiber $F$.
\end{coro}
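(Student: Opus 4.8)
The plan is to verify the three conditions $(a)$, $(b)$, $(c)$ of Theorem \ref{theo_characterization_fiber_bundles} for the functor $D=\iota C$, following the pattern of the proof of Corollary \ref{coro_int_D_is_fiber_bundle}. The essential observation is that, since $\aut(F)$ is a one-object category whose sole object is $F$ and whose arrows are the self-homeomorphisms of $F$, the composite $\iota C\colon B\to\Top$ sends every object $b\in B$ to $\iota C(b)=F$ and every arrow $b\leq b'$ to a homeomorphism $\iota C(b\leq b')\colon F\to F$. In particular $\iota C$ is morphism-inverting and, in contrast to Corollary \ref{coro_int_D_is_fiber_bundle}, the fibers are \emph{literally} equal to $F$ at every point $b$; this is precisely what will allow me to dispense with any connectedness hypothesis on $B$.

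First I would check condition $(b)$: it is immediate, since $\iota C(b)=F$ is (trivially) homeomorphic to $F$ for every $b\in B$, regardless of whether $B$ is connected. For condition $(a)$, I would use that $\Ko$ is a functor and therefore preserves isomorphisms: as each $\iota C(b\leq b')$ is a homeomorphism, each $\Ko(\iota C)(b\leq b')$ is a homeomorphism of T$_0$ spaces, so $\Ko(\iota C)\colon B\to\Top_0$ is morphism-inverting.

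For condition $(c)$, I would fix $b_1\leq b_2$ in $B$ and $y\in\Ko(\iota C)(b_2)=\Ko F$, and note that here all the quotient maps $\sigma_{b}$ coincide with the canonical $\sigma_F\colon F\to\Ko F$. From the naturality relation $\sigma_F\circ\iota C(b_1\leq b_2)=\Ko(\iota C)(b_1\leq b_2)\circ\sigma_F$ together with the fact that $\iota C(b_1\leq b_2)$ is a homeomorphism, I would conclude, exactly as in Corollary \ref{coro_int_D_is_fiber_bundle}, that $\iota C(b_1\leq b_2)$ restricts to a bijection $f_{b_1,b_2,y}\colon\sigma_F^{-1}\big((\Ko(\iota C)(b_1\leq b_2))^{-1}(y)\big)\to\sigma_F^{-1}(y)$, which is the function required in $(c)$. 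With $(a)$, $(b)$ and $(c)$ established, Theorem \ref{theo_characterization_fiber_bundles} immediately gives that $\pi^{\iota C}_B\colon\int\iota C\to B$ is a fiber bundle with fiber $F$.

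There is no genuine obstacle here: the statement is a direct corollary of the characterization theorem. The only point requiring care---and the reason the result holds without assuming $B$ connected, unlike Corollary \ref{coro_int_D_is_fiber_bundle}---is that the hypothesis that $C$ factors through $\aut(F)$ forces every $\iota C(b)$ to equal $F$ itself, so condition $(b)$ holds globally rather than being deduced from connectedness. (Equivalently, one could split $B$ into its connected components, apply Corollary \ref{coro_int_D_is_fiber_bundle} on each, and observe that all the resulting fibers are the same space $F$, so their disjoint union is a fiber bundle over $B$ with fiber $F$.)
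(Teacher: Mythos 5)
Your proof is correct and follows exactly the route the paper intends: the paper omits the proof, stating only that it is similar to that of Corollary \ref{coro_int_D_is_fiber_bundle}, namely a verification of conditions $(a)$, $(b)$, $(c)$ of Theorem \ref{theo_characterization_fiber_bundles}, which is precisely what you do. Your observation that the hypothesis $C\colon B\to\aut(F)$ forces every fiber to equal $F$ literally, so that condition $(b)$ holds without any connectedness assumption, is exactly the point that makes this variant work.
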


\section{Classification of fiber bundles over Alexandroff spaces with T$_0$ fiber}

In this section we will prove that any fiber bundle over an Alexandroff space is isomorphic to the projection map associated to the topological Grothendieck construction of a suitable morphism-inverting functor, which can be canonically defined in the case that the fiber is a T$_0$--space. Using this result we will give a classification theorem for fiber bundles over Alexandroff spaces with T$_0$ fiber.

The following proposition will be needed for our purposes.

\begin{prop} \label{lemma_id_times_alpha_sierpinski}
Let $B$ be a topological space with underlying set $\{b_0,b_1\}$ (with $b_0\neq b_1$) such that $b_0 \leq b_1$. Let $F$ be a topological space and let $p_B\colon B\times F\to B$ be the canonical projection. Let $\varphi\colon B\times F\to B\times F$ be an automorphism of the trivial fiber bundle $p_B$.
\begin{enumerate}
\item Let $p_F\colon B\times F\to F$ be the canonical projection. For each $b\in B$ let $j_b\colon F\to B\times F$ be the map defined by $j_b(x)=(b,x)$ and let $\alpha_b=p_F \varphi j_b$. Then $\Ko (\alpha_{b_0})=\Ko (\alpha_{b_1})$.
\item Let $\varphi_0\colon \{b_0\}\times F \to \{b_0\}\times F $ and $\varphi_1\colon \{b_1\}\times F \to \{b_1\}\times F$ be restrictions of $\varphi$ and let $c_{b_1}\colon \{b_0\} \to \{b_1\}$ be the only possible map. Then 
$\Ko((c_{b_1}\times \id_F)\varphi_0)=\Ko(\varphi_1(c_{b_1}\times \id_F))$.
\item If $F$ is a T$_0$--space, then there exists a map $\alpha\colon F\to F$ such that $\varphi=\id_{B}\times \alpha$. Moreover, such a map $\alpha$ is unique and a homeomorphism.
\end{enumerate}
\end{prop}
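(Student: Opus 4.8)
The plan is to exploit the very special topology of $B\times F$. Since $b_0\le b_1$ and $b_0\ne b_1$, the space $B$ is a copy of the Sierpinski space $\Sp$ with $\{b_0\}$ its only nontrivial open set, so the only open subset of $B$ containing $b_1$ is $B$ itself. This yields the crucial observation underlying the whole proposition: any open subset $O$ of $B\times F$ that contains a point $(b_1,x)$ must also contain $(b_0,x)$, because a basic open $U\times V\subseteq O$ with $b_1\in U$ forces $U=B$. I would first record that, since $\varphi$ is an automorphism of the bundle $p_B$, it preserves fibers, so $\varphi(b,x)=(b,\alpha_b(x))$ with $\alpha_b=p_F\varphi j_b$; restricting $\varphi$ and $\varphi^{-1}$ to each fiber $\{b\}\times F$ (which is homeomorphic to $F$ via $j_b$) shows that each $\alpha_b$ is a homeomorphism of $F$.

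For part (1), by Lemma \ref{lemma_Kolmogorov_quotient} it suffices to prove that $\alpha_{b_0}^{-1}(V)=\alpha_{b_1}^{-1}(V)$ for every open $V\subseteq F$. I would compute $\varphi^{-1}(B\times V)=\{b_0\}\times\alpha_{b_0}^{-1}(V)\,\cup\,\{b_1\}\times\alpha_{b_1}^{-1}(V)$; since this set is open, applying the crucial observation to its points of the form $(b_1,x)$ gives $\alpha_{b_1}^{-1}(V)\subseteq\alpha_{b_0}^{-1}(V)$. Applying the same reasoning to the open set $\varphi(B\times V)=\{b_0\}\times\alpha_{b_0}(V)\,\cup\,\{b_1\}\times\alpha_{b_1}(V)$ (using that $\varphi$, being a homeomorphism, is an open map) yields $\alpha_{b_1}(V)\subseteq\alpha_{b_0}(V)$. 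The reverse inclusion $\alpha_{b_0}^{-1}(V)\subseteq\alpha_{b_1}^{-1}(V)$ then follows by a short manipulation: substituting $\alpha_{b_1}^{-1}(V)$ for $V$ in the second relation gives $V\subseteq\alpha_{b_0}(\alpha_{b_1}^{-1}(V))$, and applying the injection $\alpha_{b_0}^{-1}$ produces the claimed inclusion. Combining the two inclusions gives $\alpha_{b_0}^{-1}(V)=\alpha_{b_1}^{-1}(V)$ and hence $\Ko(\alpha_{b_0})=\Ko(\alpha_{b_1})$.

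Parts (2) and (3) should then be essentially formal. For part (2), unwinding the definitions shows that $(c_{b_1}\times\id_F)\varphi_0$ and $\varphi_1(c_{b_1}\times\id_F)$ are the maps $\{b_0\}\times F\to\{b_1\}\times F$ sending $(b_0,x)$ to $(b_1,\alpha_{b_0}(x))$ and $(b_1,\alpha_{b_1}(x))$ respectively; their preimages of a basic open $\{b_1\}\times V$ are $\{b_0\}\times\alpha_{b_0}^{-1}(V)$ and $\{b_0\}\times\alpha_{b_1}^{-1}(V)$, which coincide by part (1), so Lemma \ref{lemma_Kolmogorov_quotient} gives the equality of Kolmogorov quotients. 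For part (3), when $F$ is T$_0$ I would upgrade part (1) to the honest equality $\alpha_{b_0}=\alpha_{b_1}$: the identity $\alpha_{b_0}^{-1}(V)=\alpha_{b_1}^{-1}(V)$ for all open $V$ means that for each $x\in F$ the points $\alpha_{b_0}(x)$ and $\alpha_{b_1}(x)$ lie in exactly the same open sets, hence are topologically indistinguishable and therefore equal. Setting $\alpha:=\alpha_{b_0}=\alpha_{b_1}$ gives $\varphi=\id_B\times\alpha$; this $\alpha$ is a homeomorphism (it is the fiber homeomorphism $\alpha_{b_0}$ from the outset) and is unique, since its values are forced by evaluating $\varphi$ on a single fiber.

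I expect essentially all of the work to sit in part (1): the whole proposition rests on the specialization observation that the relation $b_0\le b_1$ propagates inside $B\times F$ from $(b_1,x)$ to $(b_0,x)$, and on applying it to both $\varphi$ and $\varphi^{-1}$ to obtain the two preimage inclusions. The potential subtlety is precisely that a single application (say continuity of $\varphi$ alone) only gives one inclusion, and that the spaces $D(b)$ need not be T$_0$; this is why both the continuity and the openness of $\varphi$ must be used, and why the conclusion of part (1) is stated only at the level of Kolmogorov quotients. Once part (1) is in hand, parts (2) and (3) are bookkeeping, with the T$_0$ hypothesis in (3) being exactly what is needed to remove the quotient.
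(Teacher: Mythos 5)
Your proof is correct and takes essentially the same route as the paper's: you extract the two inclusions $\alpha_{b_1}^{-1}(V)\subseteq\alpha_{b_0}^{-1}(V)$ and $\alpha_{b_1}(V)\subseteq\alpha_{b_0}(V)$ from the openness of $\varphi^{-1}(B\times V)$ and $\varphi(B\times V)$ together with the specialization $b_0\leq b_1$, combine them via bijectivity of the fiber homeomorphisms to get $\alpha_{b_0}^{-1}(V)=\alpha_{b_1}^{-1}(V)$, and then deduce (1), (2) and (3) from \ref{lemma_Kolmogorov_quotient} and the T$_0$ hypothesis exactly as the paper does.
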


\begin{proof}\ 

(1) Note that $\varphi(b,x)=(b,\alpha_b(x))$ for all $(b,x)\in B\times F$. In addition, $\alpha_{b_0}$ and $\alpha_{b_1}$ are homeomorphisms.

Let $U$ be an open subset of $F$. Note that $\varphi(B\times U)=(\{b_0\}\times \alpha_{b_0}(U))\cup (\{b_1\}\times\alpha_{b_1}(U))$. Since $\varphi(B\times U)$ is an open subset of $B\times F$ and $b_0\leq b_1$ it follows that $\alpha_{b_1}(U)\subseteq \alpha_{b_0}(U)$. Similarly, $\alpha_{b_1}^{-1}(U)\subseteq \alpha_{b_0}^{-1}(U)$ for each open subset $U$ of $F$. Hence, for every open subset $V$ of $F$,
\begin{displaymath}
V=\alpha_{b_1}(\alpha_{b_1}^{-1}(V))\subseteq \alpha_{b_0}(\alpha_{b_1}^{-1}(V))\subseteq \alpha_{b_0}(\alpha_{b_0}^{-1}(V))=V.
\end{displaymath}
Thus, $\alpha_{b_0}^{-1}(V)=\alpha_{b_1}^{-1}(V)$ for every open subset $V$ of $F$. Then, $\Ko (\alpha_{b_0})=\Ko (\alpha_{b_1})$ by \ref{lemma_Kolmogorov_quotient}.

(2) Let $\alpha_{b_0}$ and $\alpha_{b_1}$ be defined as in the previous item. Let $V$ be an open subset of $\{b_1\}\times F$. Then there exists an open subset $U$ of $F$ such that $V=\{b_1\}\times U$. By the proof of the previous item $\alpha_{b_0}^{-1}(U)=\alpha_{b_1}^{-1}(U)$. Hence,
\begin{align*}
((c_{b_1}\times \id_F)\varphi_0)^{-1}(V)&=\varphi_0^{-1}((c_{b_1}\times \id_F)^{-1}(V))=\varphi_0^{-1}(\{b_0\}\times U)= \{b_0\}\times \alpha_{b_0}^{-1}(U) = \\
&= \{b_0\}\times \alpha_{b_1}^{-1}(U) = (c_{b_1}\times \id_F)^{-1}(\{b_1\}\times \alpha_{b_1}^{-1}(U)) = \\ 
&= (c_{b_1}\times \id_F)^{-1}(\varphi_1^{-1}(V)) = (\varphi_1(c_{b_1}\times \id_F))^{-1}(V).
\end{align*}
Thus $\Ko((c_{b_1}\times \id_F)\varphi_0)=\Ko(\varphi_1(c_{b_1}\times \id_F))$ by \ref{lemma_Kolmogorov_quotient}.

(3) Again, let $\alpha_{b_0}$ and $\alpha_{b_1}$ be defined as in the previous items. By (1), $\Ko (\alpha_{b_0})=\Ko (\alpha_{b_1})$, and since $F$ is a T$_0$--space we obtain that $\alpha_{b_0}=\alpha_{b_1}$. The result follows.
\end{proof}

The following result follows immediately from item (3) of \ref{lemma_id_times_alpha_sierpinski}.

\begin{coro} 
Let $B$ be a connected non-empty Alexandroff space and let $F$ be a T$_0$--space. Let $p_B\colon B\times F\to B$ be the canonical projection and let $\varphi\colon B\times F\to B\times F$ be an automorphism of the trivial fiber bundle $p_B$. Then there exists a map $\alpha\colon F\to F$ such that $\varphi=\id_B\times \alpha$. Moreover, such a map $\alpha$ is unique and a homeomorphism.
\end{coro}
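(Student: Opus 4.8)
The plan is to reduce the statement to the two--point case already handled in item (3) of Proposition \ref{lemma_id_times_alpha_sierpinski}, and then to propagate the local information across all of $B$ using connectedness. First I would fix a point $b_0\in B$ and define the candidate map $\alpha\colon F\to F$ by $\alpha=p_F\varphi j_{b_0}$, where $j_{b_0}(x)=(b_0,x)$ and $p_F\colon B\times F\to F$ is the projection; this is the fiberwise component of $\varphi$ at $b_0$. Since $\varphi$ is an automorphism of $p_B$, for every $b\in B$ there is a map $\alpha_b=p_F\varphi j_b$ with $\varphi(b,x)=(b,\alpha_b(x))$, and each $\alpha_b$ is a homeomorphism of $F$. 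The goal is then precisely to show that $\alpha_b=\alpha_{b_0}$ for all $b\in B$, which will give $\varphi=\id_B\times\alpha$ with $\alpha=\alpha_{b_0}$; uniqueness and the homeomorphism property then follow immediately from the corresponding facts for $\alpha_{b_0}$.

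The key step is to establish that the assignment $b\mapsto\alpha_b$ is \emph{locally constant} in the relevant sense. The natural way to do this is comparability: I would show that if $a,b\in B$ are comparable, say $a\leq b$, then $\alpha_a=\alpha_b$. This is exactly the content of item (3) of Proposition \ref{lemma_id_times_alpha_sierpinski} applied to the two--point subspace $\{a,b\}$ of $B$ with its induced order. Concretely, the restriction $\varphi|\colon\{a,b\}\times F\to\{a,b\}\times F$ is an automorphism of the trivial bundle $p_{\{a,b\}}$, so by that proposition (using crucially that $F$ is T$_0$) there is a single map realizing $\varphi$ on $\{a,b\}\times F$, forcing $\alpha_a=\alpha_b$. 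One only needs to check that the restriction of $\varphi$ to the open (or at least well-behaved) subspace $\pi_B^{-1}(\{a,b\})=\{a,b\}\times F$ is again a bundle automorphism over $\{a,b\}$, which is routine since $\varphi$ commutes with $p_B$.

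Having the equality $\alpha_a=\alpha_b$ for comparable points, I would then invoke the description of connected components of an Alexandroff space recalled in subsection \ref{subsect_posets}: since $B$ is connected, any two points $b,b'\in B$ are joined by a finite fence $b=z_0,z_1,\ldots,z_n=b'$ in which consecutive elements $z_{i-1},z_i$ are comparable. Applying the comparability equality along each link of the fence gives $\alpha_b=\alpha_{z_0}=\alpha_{z_1}=\cdots=\alpha_{z_n}=\alpha_{b'}$, so all the $\alpha_b$ coincide with $\alpha_{b_0}$. This yields $\varphi(b,x)=(b,\alpha_{b_0}(x))$ for every $(b,x)\in B\times F$, i.e.\ $\varphi=\id_B\times\alpha$ with $\alpha=\alpha_{b_0}$.

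The main obstacle, and the only place where the T$_0$ hypothesis on $F$ is genuinely used, is the comparability step: without T$_0$ one can only conclude $\Ko(\alpha_a)=\Ko(\alpha_b)$ (item (1) of the proposition), not the equality $\alpha_a=\alpha_b$ itself, and Example 3 earlier in the paper shows that distinct functors can have the same topological Grothendieck construction precisely when the fiber fails to be T$_0$. Once the two--point case is in hand, the connectedness argument is a purely combinatorial propagation and presents no difficulty; I would therefore spend care only on verifying that restricting $\varphi$ to $\pi_B^{-1}(\{a,b\})$ produces a bona fide bundle automorphism to which Proposition \ref{lemma_id_times_alpha_sierpinski}(3) applies.
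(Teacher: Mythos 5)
Your proposal is correct and is exactly the argument the paper intends: the paper states this corollary as following immediately from item (3) of the two-point proposition, and your fleshed-out version (restrict $\varphi$ over each comparable pair to get $\alpha_a=\alpha_b$, then propagate along a fence using the combinatorial description of connectedness for Alexandroff spaces) is the standard way to make that "immediately" precise. Your remark on where the T$_0$ hypothesis is used matches the paper's discussion as well.
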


The following theorem is one of the main results of this article.

\begin{theo} \label{theo_fiber_bundles_give_functors}
Let $B$ be an Alexandroff space and let $F$ be a topological space. Let $p\colon E\to B$ be a fiber bundle with fiber $F$. Then there exists a morphism-inverting functor $\D_p\colon B\to \Top$ such that the map $\pi_B\colon \int \D_p\to B$ is a fiber bundle isomorphic to $p$. 
\end{theo}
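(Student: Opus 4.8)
The plan is to take $\D_p(b)=p^{-1}(b)$, the fiber of $p$ over $b$, for each $b\in B$, and to define the structure maps by local parallel transport. Since $p$ is a fiber bundle, each $b$ has a trivializing neighbourhood $N$, and as $U_b\subseteq N$ is open, the restriction $p|\colon p^{-1}(U_b)\to U_b$ is a trivial bundle; I would fix a trivialization $\varphi_b\colon p^{-1}(U_b)\to U_b\times F$ and write $\varphi_b^w\colon p^{-1}(w)\to F$ for its restriction to the fiber over $w\in U_b$. For $v\leq b$ (equivalently $v\in U_b$) I would set $\D_p(v\leq b)=(\varphi_b^b)^{-1}\varphi_b^v\colon p^{-1}(v)\to p^{-1}(b)$, which is a homeomorphism; thus $\D_p$ will be morphism-inverting by construction, provided it is a functor. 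Each fiber $p^{-1}(b)$ is homeomorphic to $F$, so item (b) of \ref{theo_characterization_fiber_bundles} holds automatically.

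The crux is functoriality. One has $\D_p(b\leq b)=\id$ immediately, and for $u\leq v\leq b$ the identity $\D_p(v\leq b)\D_p(u\leq v)=\D_p(u\leq b)$ reduces to showing that the transition homeomorphism $w\mapsto \varphi_b^w(\varphi_v^w)^{-1}$ of $F$ takes the same value at $u$ and at $v$. By \ref{lemma_id_times_alpha_sierpinski}(1), applied to this automorphism of the trivial bundle over $\{u,v\}$, these two values agree only after applying $\Ko$. Hence the naive definition is functorial when $F$ is $T_0$ but need not be otherwise. This non-$T_0$ ambiguity is exactly the main obstacle I expect to face.

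To get around it I would exploit two facts. First, any two admissible transports differ by an automorphism $g$ of a fiber with $\Ko g=\id$, and such a $g$ fixes every open set (open sets are saturated under topological indistinguishability, i.e.\ $\sigma^{-1}\sigma(O)=O$); consequently the basic sets $J(b,V)$ depend only on the Kolmogorov classes $\Ko\D_p(v\leq b)$, which are unambiguous and assemble, by the $T_0$ case above, into a genuine morphism-inverting functor $\Ko\D_p\colon B\to\Top_0$. Second, to obtain an honest functor lifting $\Ko\D_p$, I would work one connected component at a time and use the identification of morphism-inverting functors on $B$ with functors on $\loc B\cong\Pi_1(B)$ from Subsection~\ref{subsect_fundamental_groupoid}: fixing a basepoint $b_0$ and homeomorphisms $p^{-1}(b_0)\to p^{-1}(b)$ lifting the already coherent maps of $\Ko\D_p$, the remaining datum is a monodromy homomorphism $\pi_1(B,b_0)\to\aut(p^{-1}(b_0))$ lifting the one induced by $\Ko\D_p$. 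I would argue that this lift is supplied by the bundle itself: on the cover by minimal open sets the genuine $\aut(F)$-valued transition functions satisfy the cocycle condition, and around a loop they compose to an honest homeomorphism of the fiber, so a genuine $\rho$ lifting the Kolmogorov monodromy exists; defining $\D_p$ from $\rho$ and the chosen homeomorphisms then yields a morphism-inverting functor with $\Ko\D_p$ as above.

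Finally I would define $\Theta\colon E\to\int\D_p$ by $\Theta(e)=(p(e),e)$, a bijection over $B$. Using a trivialization $\varphi_b$ over $U_b$ together with the first observation, the preimage $\Theta^{-1}(J(b,V))=\varphi_b^{-1}(U_b\times\varphi_b^b(V))$ is open, and letting $b$ and $V$ vary these sets generate the topology of $E$; hence $\Theta$ is a homeomorphism over $B$ and $\pi_B\colon\int\D_p\to B$ is isomorphic to $p$ (that it is a fiber bundle with fiber $F$ being guaranteed by \ref{coro_int_D_is_fiber_bundle} on each component). The delicate point throughout—and the step I expect to be hardest to make fully rigorous—is the passage from the transition data, which is only functorial modulo indistinguishability automorphisms, to a strictly functorial $\D_p$; the saving grace is that this discrepancy lives in the kernel of $\Ko$ and therefore leaves $\int\D_p$ untouched.
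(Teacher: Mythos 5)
Your overall architecture matches the paper's: take $\D_p(b)=p^{-1}(b)$ with structure maps given by transport through trivializations over minimal open sets, observe via \ref{lemma_id_times_alpha_sierpinski} that this is only coherent after applying $\Ko$, note that the topology of $\int\D_p$ is insensitive to the ambiguity (this is \ref{prop_Kolmogorov_quotient}), and then rigidify to a strict functor. Your closing homeomorphism argument is also essentially sound, since the sets $\varphi_b^{-1}(U_b\times W)$ do form a basis of $E$. The divergence is in how the rigidification is done, and that is where your proposal has a genuine gap. You reduce the problem to producing a monodromy homomorphism $\rho\colon\pi_1(B,b_0)\to\aut(p^{-1}(b_0))$ lifting the one induced by $\Ko\D_p$, and you justify its existence by saying that ``around a loop the transition functions compose to an honest homeomorphism of the fiber.'' That each edge-path yields an honest homeomorphism is true but does not give a map on $\pi_1(B,b_0)$: two fences representing the same homotopy class yield composite transports that agree only after applying $\Ko$, so ``loop $\mapsto$ composite transport'' does not descend to homotopy classes, and no homomorphism is produced by this argument. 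The lift $\rho$ does exist, but for a different reason: the subgroup of $\aut(\Ko p^{-1}(b_0))$ consisting of automorphisms preserving the cardinalities of the indistinguishability classes (which contains the image of the Kolmogorov monodromy, since the honest transports witness the requisite bijections) admits a group-theoretic section of $\Ko$, built by fixing a reference set $S_\mu$ for each cardinality $\mu$ and a bijection $f_y\colon\sigma^{-1}(y)\to S_{\#\sigma^{-1}(y)}$ for each $y$, and sending $\bar g$ to the map that is $f_{\bar g(y)}^{-1}f_y$ on $\sigma^{-1}(y)$. Without this (or an equivalent device) your construction of $\rho$, and hence of $\D_p$, is not complete.

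This normal-form device is exactly what the paper uses, but applied globally rather than at a basepoint: it first builds the functor $\D=\Ko\D_p$ on Kolmogorov quotients (well-defined by \ref{lemma_id_times_alpha_sierpinski}), then chooses sets $S_\mu$ and bijections $f_{b,y}\colon\sigma_b^{-1}(y)\to S_{\#\sigma_b^{-1}(y)}$ for all $b$ and $y$ simultaneously, and defines $\D_p(b\leq b')(x)=f_{b',\D(b\leq b')(\sigma_b(x))}^{-1}f_{b,\sigma_b(x)}(x)$. Strict functoriality is then a two-line computation, and the detour through $\loc B\cong\Pi_1(B)$, the basepoint, the choice of homeomorphisms $h_b$ compatible with $\Ko\D_p$, and the connectedness hypothesis are all avoided. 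If you want to salvage your route, replace your loop argument by the section construction above; but you will find that once you have introduced the $f_{b,y}$ you no longer need the fundamental group at all.
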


\begin{proof}
Clearly, we may assume that $B\neq\varnothing$. For each $b\in B$ let $\sigma_b\colon p^{-1}(b)\to \Ko (p^{-1}(b))$ be the quotient map. 

We define a functor $\D\colon B\to \Top$ as follows. For $b\in B$, let $\D(b)=\Ko(p^{-1}(b))$. Now, for $b,b'\in B$ such that $b\leq b'$, choose any trivializing neighbourhood $U$ of $b'$ and let $\varphi_U\colon p^{-1}(U)\to U\times F$ be a trivialization map for $U$. Observe that the map $\varphi_U$ can be restricted to homeomorphisms $\varphi_{U,b}\colon p^{-1}(b) \to \{b\}\times F$ and $\varphi_{U,b'}\colon p^{-1}(b') \to \{b'\}\times F$. Let $\cst_{b'}\colon\{b\}\to \{b'\}$ be the only possible map. Let $\delta_{b,b'}$ be the composition 
\begin{displaymath}
p^{-1}(b)\xrightarrow{\varphi_{U,b}}\{b\}\times F\xrightarrow{\cst_{b'}\times \id_F}\{b'\}\times F\xrightarrow{\varphi_{U,b'}^{-1}}p^{-1}(b').
\end{displaymath}
We define $\D(b\leq b')\colon \Ko(p^{-1}(b))\to \Ko(p^{-1}(b'))$ by $\D(b\leq b')=\Ko(\delta_{b,b'})$. 

We need to show that $\D$ is well-defined, so suppose that $V$ is another trivializing neighbourhood of $b'$ and let $\varphi_V\colon p^{-1}(V)\to V\times F$ be a trivialization map for $V$. Let $\varphi_{V,b}\colon p^{-1}(b) \to \{b\}\times F$, $\varphi_{V,b'}\colon p^{-1}(b') \to \{b'\}\times F$ and $\varphi_{V,\{b,b'\}}\colon p^{-1}(\{b,b'\}) \to \{b,b'\}\times F$ be restrictions of $\varphi_V$ and let $\varphi_{U,\{b,b'\}}\colon p^{-1}(\{b,b'\}) \to \{b,b'\}\times F$ be a restriction of $\varphi_U$. Note that $\varphi_{U,\{b,b'\}}$ and $\varphi_{V,\{b,b'\}}$ are homeomorphisms and that the map $\varphi_{V,\{b,b'\}} (\varphi_{U,\{b,b'\}})^{-1}\colon \{b,b'\}\times F\to \{b,b'\}\times F$ is a bundle automorphism of the projection map $p_{\{b,b'\}}\colon \{b,b'\}\times F\to \{b,b'\}$. By \ref{lemma_id_times_alpha_sierpinski}, the diagram
\begin{displaymath}\label{diagram_definition_Dp}
\begin{tikzpicture}[x=2cm,y=1.5cm]
\tikzstyle{every node}=[font=\footnotesize]
\draw (0,1) node(1){$\Ko (p^{-1}(b))$};
\draw (2,0) node(2){$\Ko (\{b\}\times F)$};
\draw (2,2) node(3){$\Ko (\{b\}\times F)$};
\draw (4,0) node(4){$\Ko (\{b'\}\times F)$};
\draw (4,2) node(5){$\Ko (\{b'\}\times F)$};
\draw (6,1) node(6){$\Ko (p^{-1}(b'))$};
\draw[->] (1) -> (2) node [midway,below,shift={(-0.05,-0.05)}] {$\Ko (\varphi_{V,b})$};
\draw[->] (1) -> (3) node [midway,above,shift={(-0.05,0.05)}] {$\Ko (\varphi_{U,b})$};
\draw[->] (3) -> (2) node [midway,left] {$\Ko (\varphi_{V,b} \varphi_{U,b}^{-1})$};
\draw[->] (2) -> (4) node [midway,below] {$\Ko (\cst_{b'}\times \id_F)$};
\draw[->] (3) -> (5) node [midway,above] {$\Ko (\cst_{b'}\times \id_F)$};
\draw[->] (5) -> (4) node [midway,right] {$\Ko (\varphi_{V,b'} \varphi_{U,b'}^{-1})$};
\draw[->] (5) -> (6) node [midway,above,shift={(0.1,-0.05)}] {$\Ko (\varphi_{U,b'}^{-1})$};
\draw[->] (4) -> (6) node [midway,below,shift={(0.1,-0.05)}] {$\Ko (\varphi_{V,b'}^{-1})$};
\end{tikzpicture}
\end{displaymath}
commutes. Therefore, $\D$ is well-defined. Note that $\D$ is a morphism-inverting functor.

Now we will use the functor $\D$ to define the functor $\D_p$. Let $\sigma_F\colon F\to\Ko F$ be the quotient map. For each $\mu\in\{\#\sigma_F^{-1}(z) \st z\in \Ko F \}$ choose a set $S_\mu$ such that $\# S_\mu = \mu$. For each $b\in B$ and for each $y\in \Ko(p^{-1}(b))$ choose a bijective function $f_{b,y}\colon \sigma_b^{-1}(y)\to S_{\#\sigma_b^{-1}(y)}$. Note that, for all $b,b'\in B$ such that $b\leq b'$ and for all $y\in \Ko(p^{-1}(b))$, the homeomorphism $\delta_{b,b'}$ induces a bijective function $\sigma_b^{-1}(y)\rightarrow \sigma_{b'}^{-1}(\Ko(\delta_{b,b'})(y))=\sigma_{b'}^{-1}(\D(b\leq b')(y))$, and thus $\#\sigma_{b'}^{-1}(\D(b\leq b')(y)) = \#\sigma_b^{-1}(y)$.

We define the functor $\D_p\colon B\to \Top$ as follows. For $b\in B$, let $\D_p(b)=p^{-1}(b)$. Now, for $b,b'\in B$ such that $b\leq b'$, let $\D_p(b\leq b')\colon p^{-1}(b)\to p^{-1}(b')$ be defined by 
\begin{displaymath}
\D_p(b\leq b')(x)=f_{b',\D(b\leq b')(\sigma_b(x))}^{-1}f_{b,\sigma_b(x)}(x).
\end{displaymath}
Note that the map $\D_p(b\leq b')$ is well-defined since $\# \sigma_{b'}^{-1}(\D(b\leq b')(\sigma_b(x))) = \# \sigma_b^{-1}(\sigma_b(x))$ for all $x\in p^{-1}(b)$. Note also that, for each $x\in p^{-1}(b)$, $\sigma_{b'}\D_p(b\leq b')(x)=\D(b\leq b')(\sigma_b(x))$. Thus, $\sigma_{b'}\D_p(b\leq b')=\D(b\leq b')\sigma_b$ and hence $\D_p(b\leq b')$ is a continuous map. 

Clearly, $\D_p(b\leq b)=\id_{p^{-1}(b)}$. Now let $b,b',b''\in B$ such that $b\leq b' \leq b''$ and let $x\in p^{-1}(b)$. Then
\begin{align*}
\D_p(b'\leq b'')\D_p(b\leq b')(x) &= f_{b'',\D(b'\leq b'')(\sigma_{b'}(\D_p(b\leq b')(x)))}^{-1}f_{b',\sigma_{b'}(\D_p(b\leq b')(x))}(\D_p(b\leq b')(x)) = \\
&= f_{b'',\D(b'\leq b'')\D(b\leq b')\sigma_b(x)}^{-1}f_{b',\D(b\leq b')\sigma_b(x)}(\D_p(b\leq b')(x)) = \\
&= f_{b'',\D(b\leq b'')\sigma_b(x)}^{-1} f_{b,\sigma_b(x)}(x) = \D_p(b\leq b'')(x).
\end{align*}
Thus, $\D_p$ is indeed a functor.

We will prove now that $\D_p$ is a morphism-inverting functor. Let $b,b'\in B$ such that $b\leq b'$ and let $g\colon p^{-1}(b')\to p^{-1}(b)$ be defined by $g(x)=f_{b,\D(b\leq b')^{-1}(\sigma_{b'}(x))}^{-1}f_{b',\sigma_{b'}(x)}(x)$. Note that the map $g$ is well-defined since, for all $x\in p^{-1}(b')$,
\begin{align*}
\#\sigma_b^{-1}(\D(b\leq b')^{-1}(\sigma_{b'}(x)))=\#\sigma_{b'}^{-1}(\D(b\leq b')\D(b\leq b')^{-1}(\sigma_{b'}(x)))=\#\sigma_{b'}^{-1}(\sigma_{b'}(x)).
\end{align*}
Note also that $\sigma_b g=\D(b\leq b')^{-1}\sigma_{b'}$. Thus $g$ is a continuous map.
Now, for all $x\in p^{-1}(b)$,
\begin{align*}
g(\D_p(b\leq b')(x)) &= f_{b,\D(b\leq b')^{-1}(\sigma_{b'}(\D_p(b\leq b')(x)))}^{-1}f_{b',\sigma_{b'}(\D_p(b\leq b')(x))}(\D_p(b\leq b')(x)) = \\
&= f_{b,\D(b\leq b')^{-1}\D(b\leq b')\sigma_b(x)}^{-1}f_{b',\D(b\leq b')\sigma_b(x)}(\D_p(b\leq b')(x)) = \\
&= f_{b,\sigma_b(x)}^{-1} f_{b,\sigma_b(x)}(x) = x.
\end{align*}
On the other hand, for all $x\in p^{-1}(b')$,
\begin{align*}
\D_p(b\leq b')(g(x)) &= f_{b',\D(b\leq b')(\sigma_b(g(x)))}^{-1}f_{b,\sigma_b(g(x))}(g(x)) = \\
&= f_{b',\D(b\leq b')(\D(b\leq b')^{-1}\sigma_{b'}(x))}^{-1}f_{b,\D(b\leq b')^{-1}\sigma_{b'}(x)}(g(x)) = \\
&= f_{b',\sigma_{b'}(x)}^{-1}f_{b,\D(b\leq b')^{-1}\sigma_{b'}(x)}f_{b,\D(b\leq b')^{-1}\sigma_{b'}(x)}^{-1}f_{b',\sigma_{b'}(x)}(x)= x.
\end{align*}
Hence, the functions $\D_p(b\leq b')$ and $g$ are mutually inverse. Therefore, $\D_p$ is a morphism-inverting functor.

We will prove now that $\pi_B\colon \int \D_p\to B$ is a fiber bundle isomorphic to $p$. Clearly, it suffices to prove that $\pi_B\colon \int \D_p\to B$ and $p$ are isomorphic as maps over $B$. Let $\phi\colon E\to \int \D_p$ be defined by $\phi(x)=(p(x),x)$. The function $\phi$ is clearly bijective with inverse $\phi^{-1}\colon \int \D_p\to E$ defined by $\phi^{-1}(b,x)=x$ and it is immediate that $\pi_B\phi=p$. Thus, it remains to prove that $\phi$ and its inverse are continuous maps.

We will prove first that $\phi$ is a continuous map. Let $b\in B$ and let $V\subseteq p^{-1}(b)$ be an open subset. We will prove that $\phi^{-1}(J(b,V))$ is an open subset of $E$. Let $U$ be a trivializing neighbourhood of $b$ and let $\varphi\colon p^{-1}(U)\to U\times F$ be a trivialization map. Let $p_F\colon U\times F\to F$ and $p_U\colon U\times F\to U$ be the corresponding projection maps. Since $V\subseteq p^{-1}(b)$ we obtain that $p_U\varphi(V)\subseteq\{b\}$, and hence $\varphi(V)=\{b\}\times p_F\varphi(V)$. Note also that, for all $x\in p^{-1}(U_b)$,
\begin{align*}
\sigma_b\D_p(p(x)\leq b)(x)=\D(p(x)\leq b)\sigma_{p(x)}(x)=\Ko(\delta_{p(x),b}) \sigma_{p(x)}(x)=\sigma_b \delta_{p(x),b} (x).
\end{align*}
Thus, for every $x\in E$,
\begin{align*}
x\in \phi^{-1}(J(b,V)) & \Leftrightarrow (p(x),x)\in J(b,V) \Leftrightarrow p(x)\in U_{b}\,\land\,\D_p(p(x)\leq b)(x)\in V\Leftrightarrow\\
&\Leftrightarrow p(x)\in U_{b} \,\land\, \delta_{p(x),b} (x) \in V \Leftrightarrow\\
&\Leftrightarrow p(x)\in U_{b} \,\land\,(\cst_{b}\times\id_F)\varphi(x)\in\varphi(V)\Leftrightarrow\\
&\Leftrightarrow p_U\varphi(x)\in U_{b}\,\land\,p_F\varphi(x)\in p_F\varphi(V)\Leftrightarrow\\
&\Leftrightarrow \varphi(x)\in U_{b}\times p_F\varphi(V)\Leftrightarrow x\in \varphi^{-1}(U_{b}\times p_F\varphi(V)).
\end{align*}
Hence, $\phi^{-1}(J(b,V))=\varphi^{-1}(U_{b}\times p_F\varphi(V))$. Since the restriction $\varphi|\colon p^{-1}(b)\to\{b\}\times F$ is a homeomorphism it follows that $\varphi(V)$ is an open subset of $\{b\}\times F$ and thus $p_F\varphi(V)$ is an open subset of $F$. Hence $\phi^{-1}(J(b,V))$ is an open subset of $E$ and therefore $\phi$ is a continuous map.

We will prove now that $\phi$ is an open map. Let $V$ be an open subset of $E$. We will prove that, for all $x\in V$,
\[\phi(x)=(p(x),x)\in J(p(x),V\cap p^{-1}(p(x)))\subseteq \phi(V).\]

Let $x\in V$. Clearly $\phi(x)=(p(x),x)\in J(p(x),V\cap p^{-1}(p(x)))$. Let $(b,y)\in J(p(x),V\cap p^{-1}(p(x)))$. Thus $b\leq p(x)$ and $y\in p^{-1}(b)$ since $(b,y)\in\int \D_p$. We will prove that $y\in V$ and thus $(b,y)=\phi(y)\in\phi(V)$ and the result will follow.

Let $U$ be a trivializing neighbourhood of $p(x)$ and let $\varphi\colon p^{-1}(U)\to U\times F$ be a trivialization map for $U$. Let $y'=\delta_{b,p(x)}(y)$. Hence $\varphi(y')=(\cst_{p(x)}\times\id_F)(\varphi(y))$. Thus there exists $x'\in F$ such that $\varphi(y)=(b,x')$ and $\varphi(y')=(p(x),x')$.
Since $(b,y)\in J(p(x),V\cap p^{-1}(p(x)))$, we obtain that $\D_p(b\leq p(x))(y)\in V\cap p^{-1}(p(x))$. Note that 
\begin{align*}
\sigma_{p(x)}\D_p(b\leq p(x))(y)= \D(b\leq p(x))\sigma_{b}(y)=\Ko(\delta_{b,p(x)}) \sigma_{b}(y)=\sigma_{p(x)} \delta_{b,p(x)} (y)=\sigma_{p(x)}(y').
\end{align*}
Since $V\cap p^{-1}(p(x))$ is an open subset of $p^{-1}(p(x))$ we obtain that $y'\in V\cap p^{-1}(p(x))$. Hence \[(p(x),x')=\varphi(y')\in \varphi(V\cap p^{-1}(U)).\]
Since $V\cap p^{-1}(U)$ is an open subset of $p^{-1}(U)$, it follows that $\varphi(V\cap p^{-1}(U))$ is an open subset of $U\times F$. Hence $\varphi(y)=(b,x')\in \varphi(V\cap p^{-1}(U))$ and thus $y\in V$. 
\end{proof}

\begin{rem} \label{rem_Dp_morphism_inverting_T0_fiber}
With the notations of the previous theorem and its proof observe that, if $F$ is a T$_0$--space, then $\#\sigma_F^{-1}(z)=1$ for all $z\in \Ko F$. In addition, for all $b\in B$, the quotient map $\sigma_b$ is a homeomorphism. Thus, for all $b,b'\in B$ such that
$b\leq b'$ we obtain that 
$\sigma_{b'}\D_p(b\leq b') = \D(b\leq b') \sigma_b = \Ko(\delta_{b,b'})\sigma_b = \sigma_{b'}\delta_{b,b'}$
and thus $\D_p(b\leq b') =\delta_{b,b'}$. In addition, since $\Ko X \cong X$ for T$_0$--spaces $X$, by the naturality of the quotient maps $\sigma_X$ it easily follows from the diagram of page \pageref{diagram_definition_Dp} that the definition of the maps $\delta_{b,b'}$ does not depend on the chosen trivialization map. Therefore, if $F$ is a T$_0$--space, the definition of the functor $\D_p$ only depends on the fiber bundle $p$.
\end{rem}

The previous remark motivates the following definition.

\begin{definition}
Let $B$ be an Alexandroff space and let $F$ be any T$_0$--space. Let $p\colon E\to B$ be a fiber bundle with fiber $F$. The morphism-inverting functor $\D_p\colon B\to \Top$ constructed in the previous proof will be called the \emph{canonical representation of the fiber bundle $p$}.
\end{definition}

\begin{rem} \label{rem_independence_on_fiber}
The canonical representation of a fiber bundle with T$_0$--fiber over an Alexandroff space does not depend on the choice of the fiber space. Indeed, let $B$ be an Alexandroff space and let $F$ be any T$_0$--space. Let $p\colon E\to B$ be a fiber bundle with fiber $F$. Suppose that we also regard $p$ as a fiber bundle with fiber $F'$. Clearly, there exists a homeomorphism $\alpha\colon F \to F'$. Let $b,b'\in B$ such that $b\leq b'$. Let $U$ be a trivializing neighbourhood of $b'$ (considering the space $F$ as the fiber) and let $\varphi_U\colon p^{-1}(U) \to U\times F$ be a trivialization map for $U$. Let $\varphi'_U=(\id_U\times \alpha)\varphi_U\colon p^{-1}(U) \to U\times F'$. Note that $\varphi'_U$ is a trivialization map for $U$ regarding now the space $F'$ as the fiber. Following the notations of the proof of theorem \ref{theo_fiber_bundles_give_functors}, we obtain a commutative diagram
\begin{displaymath}
\begin{tikzpicture}[x=1.5cm,y=1.5cm]
\tikzstyle{every node}=[font=\footnotesize]
\draw (0,1) node(1){$p^{-1}(b)$};
\draw (2,0) node(2){$\{b\}\times F'$};
\draw (2,2) node(3){$\{b\}\times F$};
\draw (4,0) node(4){$\{b'\}\times F'$};
\draw (4,2) node(5){$\{b'\}\times F$};
\draw (6,1) node(6){$p^{-1}(b')$};
\draw[->] (1) -> (2) node [midway,below] {$\varphi'_{U,b}$};
\draw[->] (1) -> (3) node [midway,above] {$\varphi_{U,b}$};
\draw[->] (3) -> (2) node [midway,left] {$\id_{\{b\}}\times\alpha$};
\draw[->] (2) -> (4) node [midway,below] {$\cst_{b'}\times \id_F$};
\draw[->] (3) -> (5) node [midway,above] {$\cst_{b'}\times \id_F$};
\draw[->] (5) -> (4) node [midway,right] {$\id_{\{b'\}}\times\alpha$};
\draw[->] (5) -> (6) node [midway,above] {$\varphi_{U,b'}^{-1}$};
\draw[->] (4) -> (6) node [midway,below=4pt] {$(\varphi'_{U,b'})^{-1}$};
\end{tikzpicture}
\end{displaymath}
And since the map $\D_p(b\leq b')$ does not depend on the choice of the trivialization map it follows that it does not depend on the choice of the fiber space either.
\end{rem}

\begin{rem}
Let $B$ be an Alexandroff space, let $p$ and $q$ be fiber bundles over $B$ with T$_0$ fibers and let $f$ be a map over $B$ from $p$ to $q$. For each $b\in B$ let $\overline{f}_b\colon p^{-1}(b)\to q^{-1}(b)$ be defined as a restriction of $f$. Let $\overline{f}=\{\overline{f}_b\colon b\in B\}$. The collection of maps $\overline{f}$ is not, in general, a natural transformation from $\D_p$ to $\D_q$.

For example, let $\Sp$ be the Sierpinski space and let $p\colon \Sp\times\Sp\to\Sp$ be the projection onto the first coordinate. Clearly, the canonical representation of $p$ is the functor $\D_p\colon \Sp\to\Top$ defined by $\D_p(0)=\{0\}\times \Sp$, $\D_p(1)=\{1\}\times \Sp$ and $\D_p(0\leq 1)=\cst_1\times \id_\Sp$, where $\cst_1\colon \{0\}\to\{1\}$ is the only possible map. Let $\alpha\colon \Sp\times\Sp \to \Sp\times\Sp$ be the fiber bundle morphism defined by $\alpha(0,x)=(0,x)$ for all $x\in\Sp$ and $\alpha(1,x)=(1,1)$ for all $x\in\Sp$ (equivalently, $\alpha(b,x)=(b,\max\{b,x\})$ for all $(b,x)\in\Sp\times\Sp$). Let $\overline{\alpha}_0\colon \{0\}\times\Sp\to \{0\}\times\Sp$ and $\overline{\alpha}_1\colon \{1\}\times\Sp\to \{1\}\times\Sp$ be restrictions of $\alpha$. It is easy to check that the collection $\overline{\alpha}=\{\overline{\alpha}_0,\overline{\alpha}_1\}$ is not a natural transformation from $\D_p$ to itself.

Thus, the assignment $p\mapsto \D_p$ that sends each fiber bundle over $B$ with T$_0$ fiber to its canonical representation, can not be extended, in general, to a functor by means of the assignment $f\mapsto \overline{f}$ that sends each morphism $f$ of fiber bundles over $B$ to the collection of maps $\overline{f}$ defined previously.
\end{rem}

As a first application of theorem \ref{theo_fiber_bundles_give_functors} we obtain the following.

\begin{coro} \label{coro_fiber_bundle_with_simply_connected_base}
Let $B$ be a simply connected non-empty Alexandroff space and let $p$ be a fiber bundle over $B$. Then $p$ is a trivial fiber bundle.
\end{coro}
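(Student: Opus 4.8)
The plan is to reduce the statement, via Theorem~\ref{theo_fiber_bundles_give_functors}, to a question about morphism-inverting functors, and then to exploit simple connectivity through the localization groupoid $\loc B$. First I would apply Theorem~\ref{theo_fiber_bundles_give_functors} to $p\colon E\to B$ to obtain a morphism-inverting functor $\D_p\colon B\to\Top$ together with an isomorphism, as maps over $B$, between $p$ and $\pi^{\D_p}_B\colon\int\D_p\to B$. Since $\D_p$ is morphism-inverting, the universal property of the localization recalled in subsection~\ref{subsect_fundamental_groupoid} provides a unique functor $\overline{\D_p}\colon\loc B\to\Top$ with $\D_p=\overline{\D_p}\,\iota_B$.

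Next I would determine the structure of $\loc B$. As $B$ is a non-empty Alexandroff space it is path-connected, so $\loc B\cong\Pi_1(B)$ is a connected groupoid; and as $B$ is simply connected, the isomorphisms $\aut_{\loc B}(b_0)\cong\pi_1(B,b_0)$ show that every vertex group of $\loc B$ is trivial. Hence between any two objects of $\loc B$ there is exactly one morphism. Fixing $b_0\in B$ and writing $F=\D_p(b_0)=p^{-1}(b_0)$, for each $b\in B$ let $u_b\colon b_0\to b$ be this unique morphism and set $\theta_b=\overline{\D_p}(u_b)\colon F\to\D_p(b)$; each $\theta_b$ is a homeomorphism because $\loc B$ is a groupoid and functors preserve isomorphisms. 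I claim $\theta=(\theta_b)_{b\in B}$ is a natural isomorphism from the constant functor $C_F$ to $\D_p$: for $b\leq b'$ the naturality square commutes since $\D_p(b\leq b')\circ\theta_b=\overline{\D_p}\bigl(\iota_B(b\leq b')\circ u_b\bigr)=\overline{\D_p}(u_{b'})=\theta_{b'}$, where the middle equality holds because $\iota_B(b\leq b')\circ u_b$ and $u_{b'}$ are both morphisms $b_0\to b'$ and therefore coincide.

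Finally I would transport this back to bundles. By Remark~\ref{rem_int_is_functor} the natural isomorphism $\theta$ induces an isomorphism $\theta_*\colon\int C_F\to\int\D_p$ of maps over $B$, and by Remark~\ref{rem_int_constant_functor} the source $\pi^{C_F}_B\colon\int C_F\to B$ is the projection $B\times F\to B$. Composing the three isomorphisms gives $p\cong\pi^{\D_p}_B\cong\pi^{C_F}_B$, so $p$ is trivial. The main obstacle is the identification of $\loc B$ as the groupoid with a unique morphism between each ordered pair of objects: once simple connectivity is seen to force this, the naturality of $\theta$ is automatic and the remainder is routine bookkeeping with the established functoriality of the topological Grothendieck construction.
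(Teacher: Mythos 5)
Your proposal is correct and follows essentially the same route as the paper: both reduce $p$ to $\pi^{\D_p}_B$ via Theorem~\ref{theo_fiber_bundles_give_functors}, factor $\D_p$ through $\loc B\cong\Pi_1(B)$, use simple connectivity to see that $\loc B$ is indiscrete, and conclude that $\D_p$ is naturally isomorphic to a constant functor; you merely make the natural isomorphism explicit where the paper cites the general fact that the identity of an indiscrete category is naturally isomorphic to a constant functor. One small slip: path-connectedness of $B$ does not follow from its being a non-empty Alexandroff space (a two-point discrete space is a counterexample) but rather from the hypothesis of simple connectivity (together with local path-connectedness of Alexandroff spaces), so the argument stands.
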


\begin{proof}
By \ref{theo_fiber_bundles_give_functors} there exists a morphism-inverting functor $\D_p\colon B \to \Top$ such that the map $\pi_B\colon \int \D_p\to B$ is a fiber bundle isomorphic to $p$. Let $\iota_B\colon B\to \loc B$ be defined as in subsection \ref{subsect_fundamental_groupoid}. Since $\D_p$ is a morphism-inverting functor, there exists a functor $\overline{\D_p}\colon \loc B\to \Top$ such that $\D_p=\overline{\D_p}\iota_B$. Now, since $B$ is simply connected and the groupoids $\loc B$ and $\Pi_1(B)$ are isomorphic, we obtain that $\loc B$ is an indiscrete category. It follows that the identity functor $\loc B \to \loc B$ is naturally isomorphic to a constant functor. And since $\D_p=\overline{\D_p}\iota_B=\overline{\D_p}\id_{\loc B}\iota_B$ we obtain that the functor $\D_p$ is naturally isomorphic to a constant functor. Hence, from \ref{rem_int_is_functor}, \ref{theo_fiber_bundles_give_functors} and \ref{rem_int_constant_functor} we obtain that $p$ is a trivial fiber bundle.
\end{proof}

We will now give a classification theorem for fiber bundles over Alexandroff spaces with T$_0$ fiber. Propositions \ref{prop_canonical_representations_naturally_isomorphic} and \ref{prop_functor_to_Aut_F_natural_iso} and lemma \ref{lemma_top0_vs_autF} are convenient for this purpose.

\begin{prop} \label{prop_canonical_representations_naturally_isomorphic}
Let $B$ be an Alexandroff space and let $F$ be any T$_0$--space. Let $p$ and $q$ be fiber bundles over $B$ with fiber $F$. Then $p$ and $q$ are isomorphic fiber bundles if and only if their canonical representations $\D_p$ and $\D_q$ are naturally isomorphic.
\end{prop}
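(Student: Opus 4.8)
The plan is to prove the two implications separately, leaning on Theorem~\ref{theo_fiber_bundles_give_functors} and the functoriality of $\int$ recorded in Remark~\ref{rem_int_is_functor}. For the easy implication ($\Leftarrow$), suppose $\theta\colon \D_p\Rightarrow\D_q$ is a natural isomorphism. By Remark~\ref{rem_int_is_functor} the induced map $\theta_\ast\colon \int\D_p\to\int\D_q$ is an isomorphism of objects over $B$. By Theorem~\ref{theo_fiber_bundles_give_functors}, $p$ is isomorphic as a fiber bundle (hence as a map over $B$) to $\pi_B^{\D_p}$, and $q$ to $\pi_B^{\D_q}$. Composing these isomorphisms with $\theta_\ast$ yields an isomorphism over $B$ between $p$ and $q$ which, being a homeomorphism over $B$, is an isomorphism of fiber bundles.

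For the hard implication ($\Rightarrow$), write $p\colon E\to B$ and $q\colon E'\to B$ and let $f\colon E\to E'$ be a fiber bundle isomorphism over $B$, so $qf=p$ and $f$ is a homeomorphism. Since $F$ is T$_0$, Remark~\ref{rem_Dp_morphism_inverting_T0_fiber} gives $\D_p(b)=p^{-1}(b)$, $\D_q(b)=q^{-1}(b)$, and $\D_p(b\le b')=\delta^p_{b,b'}$, $\D_q(b\le b')=\delta^q_{b,b'}$, where (crucially) these maps are independent of the trivializations used to define them. I would set $\theta_b\colon \D_p(b)\to\D_q(b)$ to be the restriction $f|_{p^{-1}(b)}$, which is a homeomorphism because $f$ is a homeomorphism over $B$. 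The key idea for naturality is to transport trivializations: given $b\le b'$, choose a trivializing neighbourhood $U$ of $b'$ for $p$ with trivialization $\varphi_U\colon p^{-1}(U)\to U\times F$, and observe that $\psi_U:=\varphi_U\circ (f^{-1}|_{q^{-1}(U)})$ is a trivialization of $q$ over $U$, since $pf^{-1}=q$. Its fiberwise restrictions satisfy $\psi_{U,c}=\varphi_{U,c}\theta_c^{-1}$ for $c\in\{b,b'\}$.

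Computing $\delta^q_{b,b'}$ from $\psi_U$ and $\delta^p_{b,b'}$ from $\varphi_U$ then gives
\[
\delta^q_{b,b'}=\psi_{U,b'}^{-1}(\cst_{b'}\times\id_F)\psi_{U,b}=\theta_{b'}\,\varphi_{U,b'}^{-1}(\cst_{b'}\times\id_F)\varphi_{U,b}\,\theta_b^{-1}=\theta_{b'}\,\delta^p_{b,b'}\,\theta_b^{-1}.
\]
By the trivialization-independence of Remark~\ref{rem_Dp_morphism_inverting_T0_fiber}, this reads $\D_q(b\le b')\circ\theta_b=\theta_{b'}\circ\D_p(b\le b')$, which is precisely the naturality square. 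Hence $\theta=\{\theta_b\}$ is a natural isomorphism $\D_p\Rightarrow\D_q$, completing this direction.

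The step I expect to need the most care is exactly this naturality check, because the remark preceding the proposition explicitly warns that the fiberwise restrictions of a general bundle \emph{morphism} need not assemble into a natural transformation. The resolution is twofold: an \emph{isomorphism} $f$ allows one to push a single trivialization of $p$ forward to a trivialization of $q$ (which fails for non-invertible morphisms, as in that remark's example), and the T$_0$ hypothesis makes $\delta^p_{b,b'}$ and $\delta^q_{b,b'}$ genuinely independent of the chosen trivialization; combining these two facts is what forces the square to commute. In writing this up I would verify carefully that $\psi_U$ really is a trivialization (that is, $p_U\psi_U=q|_{q^{-1}(U)}$) and that the restriction identities $\psi_{U,c}=\varphi_{U,c}\theta_c^{-1}$ hold on the nose, since the entire argument hinges on them.
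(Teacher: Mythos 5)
Your proposal is correct and follows essentially the same route as the paper: for the forward direction the paper likewise restricts the bundle isomorphism fiberwise, transports the trivialization $\varphi_U$ of $p$ to the trivialization $\psi_U=\varphi_U\alpha_U^{-1}$ of $q$, and invokes the trivialization-independence of $\D_p(b_1\leq b_2)$ and $\D_q(b_1\leq b_2)$ to get the commuting naturality square; the converse is handled exactly as you do, via Remark \ref{rem_int_is_functor} and Theorem \ref{theo_fiber_bundles_give_functors}.
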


\begin{proof}
Suppose that $p$ and $q$ are isomorphic fiber bundles and let $\alpha$ be a homeomorphism such that $q\alpha=p$. For each $b\in B$, let $\theta_b\colon \D_p(b)\to \D_q(b)$ be defined as a restriction of $\alpha$. It is clear that $\theta_b$ is a homeomorphism for all $b\in B$. We will prove that the maps $\theta_b$, $b\in B$, define a natural transformation $\theta\colon \D_p\Rightarrow \D_q$. To this end, let $b_1,b_2\in B$ such that $b_1\leq b_2$. Let $U$ be a trivializing neighbourhood of $b_2$ for the fiber bundle $p$, let $\varphi_U\colon p^{-1}(U)\to U\times F$ be a trivialization map and let $\alpha_U\colon p^{-1}(U)\to q^{-1}(U)$ be defined as a restriction of $\alpha$. It follows that $U$ is also a trivializing neighbourhood for the fiber bundle $q$ and that $\psi_U=\varphi_U\alpha^{-1}_U\colon q^{-1}(U)\to U\times F$ is a trivialization map for $q$. Since the maps $\D_p(b_1\leq b_2)$ and $\D_q(b_1\leq b_2)$ are independent of the choice of the trivialization maps, there is a commutative diagram
\begin{displaymath}
\begin{tikzpicture}[x=1.5cm,y=1.5cm]
\tikzstyle{every node}=[font=\footnotesize]
\draw (0,2) node(1){$p^{-1}(b_1)$};
\draw (6,2) node(4){$p^{-1}(b_2)$};
\draw (2,1) node(2){$\{b_1\}\times F$};
\draw (4,1) node(3){$\{b_2\}\times F$};
\draw (0,0) node(5){$q^{-1}(b_1)$};
\draw (6,0) node(8){$q^{-1}(b_2)$};

\draw[->] (1)--(2) node [midway,above] {$\varphi_{U,b_1}$};
\draw[->] (2)--(3) node [midway,above] {$\cst_{b_2}\times \id_F$};
\draw[->] (3)--(4) node [midway,above] {$\varphi_{U,b_2}^{-1}$};
\draw[->] (1) to node [midway,above] {$\D_p(b_1\leq b_2)$} (4);

\draw[->] (5) -> (2) node [midway,below] {$\psi_{U,b_1}$};
\draw[->] (3) -> (8) node [midway,below] {$\psi_{U,b_2}^{-1}$};
\draw[->] (5) to node [midway,below] {$\D_q(b_1\leq b_2)$} (8);

\draw[->] (1)--(5) node [midway,left]{$\theta_{b_1}$};
\draw[->] (4)--(8) node [midway,right]{$\theta_{b_2}$};
\end{tikzpicture}
\end{displaymath}
where, as in the proof of theorem \ref{theo_fiber_bundles_give_functors}, $\varphi_{U,b_1}$ and $\varphi_{U,b_2}$ denote restrictions of $\varphi_U$ and $\psi_{U,b_1}$ and $\psi_{U,b_2}$ denote restrictions of $\psi_U$. Thus, $\theta\colon \D_p\Rightarrow \D_q$ is a natural transformation.

Conversely, suppose that the canonical representations $\D_p$ and $\D_q$ are naturally isomorphic. By \ref{rem_int_is_functor}, the fiber bundles $\pi_B^{\D_p}\colon \int \D_p \to B$ and $\pi_B^{\D_q}\colon \int \D_q \to B$ are isomorphic. And by \ref{theo_fiber_bundles_give_functors}, the fiber bundle $\pi_B^{\D_p}$ is isomorphic to $p$ and the fiber bundle $\pi_B^{\D_q}$ is isomorphic to $q$. Therefore, $p$ and $q$ are isomorphic fiber bundles.
\end{proof}

\begin{prop} \label{prop_functor_to_Aut_F_natural_iso}
Let $B$ be an Alexandroff space and let $F$ be a T$_0$--space. Let $C\colon B\to \aut(F)$ be a functor and let $\iota\colon\aut(F)\to\Top$ be the inclusion functor. Let $\D_{\pi^{\iota C}_B}$ be the canonical representation of the fiber bundle $\pi^{\iota C}_B\colon \int \iota C\to B$. Then the functors $\iota C$ and $\D_{\pi^{\iota C}_B}$ are naturally isomorphic.
\end{prop}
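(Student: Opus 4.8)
The plan is to write down an explicit natural isomorphism $\theta\colon \iota C\Rightarrow \D_p$, where I abbreviate $p=\pi^{\iota C}_B$. By \ref{coro_functor_to_Aut_F_fiber_bundle} the map $p$ is a fiber bundle with fiber $F$, so its canonical representation $\D_p$ is defined; and since $F$ is a T$_0$--space, \ref{rem_Dp_morphism_inverting_T0_fiber} applies. Because $\iota C$ has constant value $F$, we have $\int \iota C=\bigcup_{b\in B}\{b\}\times F$, so that $p^{-1}(b)=\{b\}\times F=\D_p(b)$ for every $b\in B$, while $\iota C(b)=F$. I would therefore take $\theta_b\colon \iota C(b)=F\to \D_p(b)=\{b\}\times F$ to be the embedding $\iota_b$ of \ref{rem_embedding}, that is $\theta_b(x)=(b,x)$, which is a homeomorphism onto $p^{-1}(b)$.

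The crux is to compute the maps $\D_p(b\leq b')$ in terms of $C$. By \ref{rem_Dp_morphism_inverting_T0_fiber} we have $\D_p(b\leq b')=\delta_{b,b'}$, and since (again by \ref{rem_Dp_morphism_inverting_T0_fiber}) this map does not depend on the chosen trivialization, I am free to use a convenient one over $U_{b'}$. First I would check that $\psi\colon p^{-1}(U_{b'})\to U_{b'}\times F$ given by $\psi(\beta,x)=(\beta,C(\beta\leq b')(x))$ is a trivialization map: it clearly commutes with the projections onto $U_{b'}$, and using that $\{J(v,V) : v\leq b'\}$ is a basis for $p^{-1}(U_{b'})$ together with the functoriality and invertibility of the arrows $C(v\leq b')$ one obtains
\begin{displaymath}
\psi^{-1}(U_v\times W)=J(v,C(v\leq b')^{-1}(W)) \quad\text{and}\quad \psi(J(v,V))=U_v\times C(v\leq b')(V)
\end{displaymath}
for $v\leq b'$, so that $\psi$ and $\psi^{-1}$ are both continuous. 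Unwinding the definition of $\delta_{b,b'}$ as the composite $\varphi_{U,b'}^{-1}\circ(\cst_{b'}\times \id_F)\circ \varphi_{U,b}$ with this trivialization, and using $C(b'\leq b')=\id_F$, then yields $\D_p(b\leq b')(b,x)=\delta_{b,b'}(b,x)=(b',C(b\leq b')(x))$.

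Finally I would verify the naturality of $\theta$: for $b\leq b'$ and $x\in F$ both composites of the relevant square equal $(b',C(b\leq b')(x))$, since
\begin{displaymath}
\theta_{b'}\bigl(\iota C(b\leq b')(x)\bigr)=\bigl(b',C(b\leq b')(x)\bigr)=\D_p(b\leq b')\bigl(\theta_b(x)\bigr).
\end{displaymath}
As each $\theta_b$ is a homeomorphism, this shows that $\theta$ is a natural isomorphism. The only genuine work is the verification that $\psi$ is a trivialization map, which I expect to be the main obstacle; however it reduces to a routine manipulation of the basic open sets $J(v,V)$ using the functoriality and invertibility of the arrows of $C$.
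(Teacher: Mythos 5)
Your proposal is correct and follows essentially the same route as the paper: the same componentwise homeomorphisms $x\mapsto(b,x)$, the same trivialization $\psi(\beta,x)=(\beta,C(\beta\leq b')(x))$ over $U_{b'}$, and the same appeal to \ref{rem_Dp_morphism_inverting_T0_fiber} to identify $\D_p(b\leq b')$ with $\delta_{b,b'}$ computed from that trivialization. The only cosmetic difference is that you verify directly that $\psi$ is a homeomorphism via the identities for $\psi^{-1}(U_v\times W)$ and $\psi(J(v,V))$, whereas the paper cites the construction in the proof of \ref{theo_characterization_fiber_bundles}.
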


\begin{proof}
We may assume that $B\neq\varnothing$. Let $b\in B$. By \ref{rem_embedding} the map $i_b\colon F \to \int \iota C$ defined by $i_b(x)=(b,x)$ is a topological embedding. And since $i_b(F)=(\pi^{\iota C}_B)^{-1}(b)=\D_{\pi^{\iota C}_B}(b)$ we obtain that the restriction of $i_b$ to its image defines a homeomorphism $\alpha_b\colon F\to \D_{\pi^{\iota C}_B}(b)$.

We will prove that the collection of arrows $\{\alpha_b \st b\in B\}$ is a natural isomorphism from $\iota C$ to $\D_{\pi^{\iota C}_B}$. Let $b_1,b_2\in B$ such that $b_1\leq b_2$. Let $\varphi\colon (\pi^{\iota C}_B)^{-1}(U_{b_2})\to U_{b_2}\times F$ be the map defined by
$\varphi(\beta,x)=(\beta,C(\beta\leq b_2)(x))$ for every $(\beta,x)\in (\pi^{\iota C}_B)^{-1}(U_{b_2})$. From the proof of \ref{theo_characterization_fiber_bundles} it follows that $\varphi$ is a trivialization map. As in the proof of theorem \ref{theo_fiber_bundles_give_functors}, let $\varphi_{b_1}\colon (\pi^{\iota C}_B)^{-1}(b_1)\to \{b_1\}\times F$ and $\varphi_{b_2}\colon (\pi^{\iota C}_B)^{-1}(b_2)\to \{b_2\}\times F$ be restrictions of $\varphi$. By remark \ref{rem_Dp_morphism_inverting_T0_fiber}, $\D_{\pi^{\iota C}_B}(b_1\leq b_2)=\varphi_{b_2}^{-1}(\cst_{b_2}\times \id_{F})\varphi_{b_1}$ and thus $\D_{\pi^{\iota C}_B}(b_1\leq b_2)\alpha_{b_1}=\alpha_{b_2} C(b_1\leq b_2)$. The result follows.
\end{proof}

The following lemma is an easy consequence of standard arguments in category theory. We include a proof for completeness.

\begin{lemma} \label{lemma_top0_vs_autF}
Let $B$ be an Alexandroff space and let $F$ be a topological space. Let $\D\colon B\to \Top$ be a morphism-inverting functor such that, for each $b\in B$, $\D(b)$ is homeomorphic to $F$. Let $\iota\colon\aut(F)\to\Top$ be the inclusion functor. Then there exists a functor $\E\colon B\to \aut(F)$ such that the functors $\iota \E$ and $\D$ are naturally isomorphic.
\end{lemma}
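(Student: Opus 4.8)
The plan is to use the standard transport-of-structure argument, conjugating the maps of $\D$ through a chosen system of homeomorphisms. First I would dispose of the trivial case $B=\varnothing$, where $\E$ is the empty functor and there is nothing to check. Assuming $B\neq\varnothing$, I would invoke the hypothesis to choose, for every $b\in B$, a homeomorphism $\alpha_b\colon F\to\D(b)$; this uses the axiom of choice.

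Next I would define the candidate functor $\E\colon B\to\aut(F)$. Since $\aut(F)$ has the single object $F$, there is nothing to specify on objects, and on each arrow $b\leq b'$ of $B$ I would set $\E(b\leq b')=\alpha_{b'}^{-1}\,\D(b\leq b')\,\alpha_b$. Because $\D$ is morphism-inverting, each $\D(b\leq b')$ is a homeomorphism, so $\E(b\leq b')$ is a homeomorphism of $F$ and hence a genuine morphism of $\aut(F)$. I would then verify functoriality. The identity law $\E(b\leq b)=\alpha_b^{-1}\D(b\leq b)\alpha_b=\id_F$ is immediate from $\D(b\leq b)=\id_{\D(b)}$, and the composition law follows from functoriality of $\D$ by a telescoping cancellation of the inner $\alpha_{b'}\alpha_{b'}^{-1}$: for $b\leq b'\leq b''$,
\[
\E(b'\leq b'')\E(b\leq b')=\alpha_{b''}^{-1}\D(b'\leq b'')\alpha_{b'}\alpha_{b'}^{-1}\D(b\leq b')\alpha_b=\alpha_{b''}^{-1}\D(b\leq b'')\alpha_b=\E(b\leq b'').
\]
Here I would note that, since $B$ is a preordered set, between any two objects there is at most one arrow, so no further coherence has to be checked.

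Finally, the family $\{\alpha_b\st b\in B\}$ is the desired natural isomorphism. By construction the defining equation rearranges to $\D(b\leq b')\,\alpha_b=\alpha_{b'}\,\E(b\leq b')$, which is exactly the naturality square for $\alpha\colon\iota\E\Rightarrow\D$, bearing in mind that $\iota\E(b)=F$ and $\iota\E(b\leq b')=\E(b\leq b')$. Since each $\alpha_b$ is a homeomorphism, $\alpha$ is a natural isomorphism, and the lemma follows. I do not anticipate any genuine obstacle; the only point requiring slight care is the compatibility of the conjugation with composition, i.e. that $\E$ is well defined as a functor, which is precisely the transport-of-structure computation displayed above.
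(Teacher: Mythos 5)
Your proof is correct and is essentially identical to the paper's: both choose a system of homeomorphisms between $F$ and the spaces $\D(b)$ and conjugate the maps of $\D$ through them, with the chosen homeomorphisms furnishing the natural isomorphism. The only difference is the direction of the chosen homeomorphisms, which is immaterial.
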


\begin{proof}
For each $b\in B$ choose a homeomorphism $\gamma_b\colon \D(b)\to F$. Let $\E\colon B\to\aut(F)$ be the functor defined by $\E(b)=F$ for each $b\in B$ and $\E(b_1\leq b_2)=\gamma_{b_2}\D(b_1\leq b_2)\gamma_{b_1}^{-1}$ for $b_1, b_2\in B$ with $b_1\leq b_2$. Clearly, $\E$ is a functor and the homeomorphisms $\gamma_b$, $b\in B$, define a natural isomorphism from $\D$ to $\iota \E$. 
\end{proof}

We give now the aforementioned classification theorem for fiber bundles over Alexandroff spaces with T$_0$ fiber.

\begin{theo} \label{theo_classification_fiber_bundles_Grothendieck_construction}
Let $B$ be an Alexandroff space and let $F$ be any T$_0$--space. Then there exists a canonical bijection between isomorphism classes of fiber bundles over $B$ with fiber $F$ and isomorphism classes of functors from $B$ to $\aut(F)$. This bijection is induced by the canonical representation and its inverse is induced by the topological Grothendieck construction.
\end{theo}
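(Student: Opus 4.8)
The plan is to construct the two maps directly and show they are mutually inverse bijections, using the propositions assembled just before the statement. The two sets in question are: isomorphism classes of fiber bundles over $B$ with fiber $F$, and isomorphism classes of functors $B\to\aut(F)$. The candidate maps are already essentially built. Going from bundles to functors: given a fiber bundle $p$ with fiber $F$, take its canonical representation $\D_p\colon B\to\Top$ (Remark \ref{rem_Dp_morphism_inverting_T0_fiber}), which is a morphism-inverting functor with each $\D_p(b)$ homeomorphic to $F$; then apply Lemma \ref{lemma_top0_vs_autF} to obtain a functor $\E_p\colon B\to\aut(F)$ with $\iota\E_p\cong\D_p$. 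Going from functors to bundles: given $C\colon B\to\aut(F)$, form the topological Grothendieck construction $\pi^{\iota C}_B\colon\int\iota C\to B$, which is a fiber bundle with fiber $F$ by Corollary \ref{coro_functor_to_Aut_F_fiber_bundle}.

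\medskip
First I would check that these assignments are well-defined on isomorphism classes. For the bundle-to-functor direction, Proposition \ref{prop_canonical_representations_naturally_isomorphic} says $p\cong q$ as bundles if and only if $\D_p\cong\D_q$ as functors; combined with Lemma \ref{lemma_top0_vs_autF} (and the fact that naturally isomorphic functors into $\Top$ that factor through $\aut(F)$ give isomorphic functors into $\aut(F)$) this shows the class of $\E_p$ depends only on the class of $p$. For the functor-to-bundle direction, Remark \ref{rem_int_is_functor} guarantees that a natural isomorphism $C\cong C'$ induces an isomorphism of the associated bundles $\pi^{\iota C}_B\cong\pi^{\iota C'}_B$ over $B$, so the bundle class depends only on the functor class.

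\medskip
Next I would verify that the two maps are mutually inverse. Starting from a functor $C\colon B\to\aut(F)$, forming $\pi^{\iota C}_B$ and taking its canonical representation, Proposition \ref{prop_functor_to_Aut_F_natural_iso} yields $\iota C\cong\D_{\pi^{\iota C}_B}$ as functors $B\to\Top$; translating this natural isomorphism back through $\aut(F)$ recovers the class of $C$, so one composite is the identity. Conversely, starting from a bundle $p$, we pass to $\E_p$ with $\iota\E_p\cong\D_p$, then form $\pi^{\iota\E_p}_B$; by Theorem \ref{theo_fiber_bundles_give_functors} the bundle $\pi^{\D_p}_B$ is isomorphic to $p$, and since $\iota\E_p\cong\D_p$, Remark \ref{rem_int_is_functor} gives $\pi^{\iota\E_p}_B\cong\pi^{\D_p}_B\cong p$, so the other composite is also the identity.

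\medskip
\textbf{The main obstacle} is bookkeeping rather than a deep new idea: every object here is only defined up to isomorphism (homeomorphisms of fibers, choices of trivializations, choices of the homeomorphisms $\gamma_b$ in Lemma \ref{lemma_top0_vs_autF}), so I must be careful that the constructed maps descend cleanly to isomorphism classes and that the claimed naturality squares commute independently of all these choices. The crucial technical input making this work smoothly is the T$_0$ hypothesis on $F$: it forces each quotient map $\sigma_b$ to be a homeomorphism and, via Proposition \ref{lemma_id_times_alpha_sierpinski}(3), makes the transition maps $\delta_{b,b'}=\D_p(b\leq b')$ independent of the chosen trivialization (Remark \ref{rem_Dp_morphism_inverting_T0_fiber}), which is exactly what is needed for the canonical representation to be genuinely canonical. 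I would therefore present the argument by citing each of the four preparatory results in turn and assembling them, leaving the verification that the natural isomorphisms respect the factorization through $\aut(F)$ as the only point requiring explicit checking.
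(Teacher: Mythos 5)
Your proposal is correct and follows essentially the same route as the paper's own proof: the same two maps (canonical representation composed with Lemma \ref{lemma_top0_vs_autF}, and the topological Grothendieck construction via Corollary \ref{coro_functor_to_Aut_F_fiber_bundle}), well-definedness via Proposition \ref{prop_canonical_representations_naturally_isomorphic} and Remark \ref{rem_int_is_functor}, and mutual inversion via Theorem \ref{theo_fiber_bundles_give_functors} and Proposition \ref{prop_functor_to_Aut_F_natural_iso}. Your explicit remark that a natural isomorphism $\iota\E_p\cong\iota\E_q$ in $\Top$ lifts to one in $\aut(F)^B$ (since its components are automorphisms of $F$) is a point the paper leaves implicit, and it is handled correctly.
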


\begin{proof}
Clearly, we may assume that $B\neq\varnothing$. Let $[\Fib_B(F)]$ denote the set of isomorphism classes of fiber bundles over $B$ with fiber $F$ and let $[B,\aut(F)]$ denote the isomorphism classes of functors from $B$ to $\aut(F)$. Let $\iota\colon\aut(F)\to\Top$ be the inclusion functor.

We define a function $\lambda\colon[\Fib_B(F)] \to [B,\aut(F)]$ as follows. Let $p$ be a fiber bundle over $B$ with fiber $F$ and let $[p]$ be its isomorphism class. Let $\D_p \colon B\to\Top$ be the canonical representation of $p$. By lemma \ref{lemma_top0_vs_autF} there exists a functor $\E_p\colon B\to \aut(F)$ such that $\iota \E_p$ and $\D_p$ are naturally isomorphic. We define $\lambda([p])$ as the isomorphism class of the functor $\E_p$. 

We will prove now that $\lambda$ is well-defined. Let $p$ and $q$ be isomorphic fiber bundles over $B$ with fiber $F$, let $\D_p$ and $\D_q$ be the canonical representations of $p$ and $q$ respectively and let $\E_p,\E_q\colon B\to \aut(F)$ be functors such that $\iota \E_p$ is naturally isomorphic to $\D_p$ and $\iota \E_q$ is naturally isomorphic to $\D_q$. By \ref{prop_canonical_representations_naturally_isomorphic}, $\D_p$ and $\D_q$ are naturally isomorphic and hence $\E_p$ and $\E_q$ are naturally isomorphic. Therefore, $\lambda$ is well-defined.

On the other hand, from \ref{coro_functor_to_Aut_F_fiber_bundle} and \ref{rem_int_is_functor} it follows that the topological Grothendieck construction induces a function $\mu\colon[B,\aut(F)]\to[\Fib_B(F)]$. Finally, from \ref{theo_fiber_bundles_give_functors}, \ref{rem_int_is_functor} and \ref{prop_functor_to_Aut_F_natural_iso} it follows that the functions $\lambda$ and $\mu$ are mutually inverse.
\end{proof}

\begin{ex}
Let $S^0$ denote the $0$--sphere. Note that the non-Hausdorff suspension of $S^0$ is the poset $\Su S^0$ defined by the following Hasse diagram, where we have labeled the minimal elements by $a$ and $b$ and the maximal elements by $c$ and $d$.
\begin{center}
\begin{tikzpicture}
\tikzstyle{every node}=[font=\footnotesize]
\draw (0,0) node(a){$\bullet$} node[left=1]{$a$};
\draw (1,0) node(b){$\bullet$} node[right=1]{$b$};
\draw (0,1) node(c){$\bullet$} node[left=1]{$c$};
\draw (1,1) node(d){$\bullet$} node[right=1]{$d$};
\draw (a)--(c);
\draw (a)--(d);
\draw (b)--(c);
\draw (b)--(d);
\end{tikzpicture}
\end{center}
By theorem \ref{theo_classification_fiber_bundles_Grothendieck_construction}, the isomorphism classes of fiber bundles over $\Su S^0$ with fiber $\Su S^0$ are in one-to-one correspondence with the isomorphism classes of functors from $\Su S^0$ to $\aut(\Su S^0)$.
Note that $\aut(\Su S^0)=\{\id_{\Su S^0},\tau_{ab},\tau_{cd},\tau_{ab}\tau_{cd}\}$, where $\tau_{xy}$ denotes the transposition that maps $x$ to $y$ (and $y$ to $x$).

For each $\alpha\in \aut(\Su S^0)$ let $\mathcal{G}_\alpha\colon\Su S^0 \to \aut(\Su S^0)$ be the functor defined by $\mathcal{G}_\alpha(a\leq c)=\mathcal{G}_\alpha(a\leq d)=\mathcal{G}_\alpha(b\leq c)=\id_{\Su S^0}$ and $\mathcal{G}_\alpha(b\leq d)=\alpha$. It is not difficult to check that for any functor $G\colon\Su S^0 \to \aut(\Su S^0)$ there exists exactly one element $\alpha\in\aut(\Su S^0)$ such that $G$ is naturally isomorphic to $\mathcal{G}_\alpha$.

Therefore, there exist exactly four isomorphism classes of fiber bundles over $\Su S^0$ with fiber $\Su S^0$ which correspond to the functors $\mathcal{G}_\alpha$ for $\alpha\in \aut(\Su S^0)$.

It is interesting to observe that the total spaces of those fiber bundles are homeomorphic to the spaces $\Tf_{0,0}$, $\Tf_{1,1}$, $\Kl_{1,0}$ and $\Kl_{0,1}$ given in \cite{cianci2018splitting}, which are the \emph{minimal finite models} of the torus and the Klein bottle.
\end{ex}

\begin{rem}
Theorem \ref{theo_classification_fiber_bundles_Grothendieck_construction} may not hold if the fiber $F$ is not a T$_0$--space. Indeed, let $F$ be the indiscrete space with underlying set $\{1,2\}$. As in the previous example, let $\Su S^0$ be the non-Hausdorff suspension of the $0$--sphere $S^0$, let $a$ and $b$ be the minimal elements of $\Su S^0$ and let $c$ and $d$ be its maximal elements.

Let $f_1\colon F\to F$ be the identity map and let $f_2\colon F\to F$ be defined by $f_2(1)=2$ and $f_2(2)=1$. For $j\in\{1,2\}$ let $D_j\colon\Su S^0\to\aut(F)$ be the functor defined by $D_j(a\leq c)=D_j(a\leq d)=D_j(b\leq c)=\id_F$ and $D_j(b\leq d)=f_j$. A simple argument shows that the functors $D_1$ and $D_2$ are not naturally isomorphic. However, it is not difficult to verify that the only fiber bundle over $\Su S^0$ with fiber $F$ is the trivial bundle. This can be proved using the definition of fiber bundle and standard general topology arguments or applying \ref{theo_fiber_bundles_give_functors}, \ref{lemma_top0_vs_autF}, \ref{rem_int_is_functor}, \ref{rem_int_constant_functor} and \ref{prop_Kolmogorov_quotient}.
\end{rem}

\section{Construction of a universal bundle} \label{sect_universal_bundle}

In this section we will construct a universal bundle with fiber a given T$_0$--space.

\begin{prop} \label{prop_canonical_representation_of_pullback}
Let $X$ and $B$ be Alexandroff spaces and let $f\colon X\to B$ be a continuous map. Let $D\colon B\to \Top$ be a functor. Then there is a pullback diagram
\begin{center}
\begin{tikzpicture}[x=2cm,y=2cm]
\draw (0,0) node (X) {$X$};
\draw (0,1) node (Df) {$\int Df$};
\draw (1,0) node (B) {$B$};
\draw (1,1) node (D) {$\int D$};
\draw (0.5,0.5) node {\textnormal{pull}};

\draw[->] (X)--(B) node [midway,below] {$f$};
\draw[->] (Df)--(D) node [midway,above] {$g$};
\draw[->] (Df)--(X) node [midway,left] {$\pi^{Df}_X$};
\draw[->] (D)--(B) node [midway,right] {$\pi^D_B$};
\end{tikzpicture}
\end{center}
where the map $g\colon \int Df \to \int D$ is defined by $g(x,y)=(f(x),y)$.
\end{prop}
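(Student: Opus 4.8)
The plan is to verify directly that the square satisfies the universal property of a pullback in $\Top$. Since $f$ is a continuous map between Alexandroff spaces it is order-preserving, so the composite $Df=D\circ f\colon X\to\Top$ is a well-defined functor and the underlying set of $\int Df$ is $\{(x,y)\st x\in X,\ y\in D(f(x))\}$; on this set $g(x,y)=(f(x),y)$ indeed lands in $\int D$ because $y\in D(f(x))$. Commutativity is immediate, since both $\pi^D_B\circ g$ and $f\circ\pi^{Df}_X$ send $(x,y)$ to $f(x)$.

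First I would establish continuity of the two maps out of $\int Df$. Continuity of $\pi^{Df}_X$ is a special case of the earlier result that the canonical projection of any topological Grothendieck construction is continuous. For $g$, I would compute the preimage of a basic open set $J_D(b,V)$ and show that for each $(x,y)$ in it the basic open set $J_{Df}(x,D(f(x)\leq b)^{-1}(V))$ is contained in $g^{-1}(J_D(b,V))$. This verification rests on two facts: that $f$ is order-preserving, so $x'\leq x$ forces $f(x')\leq f(x)\leq b$, and the functoriality identity $D(f(x')\leq b)=D(f(x)\leq b)\circ D(f(x')\leq f(x))$ together with $Df(x'\leq x)=D(f(x')\leq f(x))$.

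Then I would check the universal property. Given a space $Z$ and continuous maps $u\colon Z\to X$ and $v\colon Z\to\int D$ with $f u=\pi^D_B v$, writing $v(z)=(f(u(z)),w_z)$ forces $w_z\in D(f(u(z)))=Df(u(z))$, so there is exactly one set-map $h\colon Z\to\int Df$ compatible with both projections, namely $h(z)=(u(z),w_z)$. The only real content is the continuity of $h$, and the key computation I expect to carry out is the identity
\begin{displaymath}
h^{-1}(J_{Df}(x_0,W))=u^{-1}(U_{x_0})\cap v^{-1}(J_D(f(x_0),W))
\end{displaymath}
for $x_0\in X$ and $W$ open in $Df(x_0)=D(f(x_0))$. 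Both sets on the right are open because $u$ and $v$ are continuous, which yields continuity of $h$ and hence the universal property.

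The main obstacle—really the one delicate point—is justifying this displayed identity. The subtlety is that membership $h(z)\in J_{Df}(x_0,W)$ requires $u(z)\leq x_0$, whereas $v(z)\in J_D(f(x_0),W)$ only requires $f(u(z))\leq f(x_0)$, and these are not equivalent since $f$ need not reflect the order. The resolution is that the extra intersection with $u^{-1}(U_{x_0})$ reinstates the condition $u(z)\leq x_0$, after which the two \emph{second-coordinate} conditions coincide because $Df(u(z)\leq x_0)=D(f(u(z))\leq f(x_0))$, and the condition $f(u(z))\leq f(x_0)$ becomes redundant. Once this is settled the identity follows and the proof is complete.
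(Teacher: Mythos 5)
Your proposal is correct and follows essentially the same route as the paper: the paper likewise proves continuity of $g$ by expressing $g^{-1}(J_D(b,V))$ as a union of basic open sets $J_{Df}(x,D(f(x)\leq b)^{-1}(V))$ over $x\in f^{-1}(U_b)$, and establishes continuity of the induced map via exactly your identity $\gamma^{-1}(J_{Df}(x,W))=\alpha^{-1}(U_x)\cap\beta^{-1}(J_D(f(x),W))$. Your explicit discussion of why the intersection with $u^{-1}(U_{x_0})$ is needed (since $f$ need not reflect the order) is the right justification of the one point the paper leaves as ``not difficult to verify.''
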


\begin{proof}
First, we will prove that the map $g$ is continuous. Let $b\in B$ and let $V$ be an open subset of $D(b)$. It is not difficult to verify that
\begin{displaymath}
g^{-1}(J_D(b,V))=\bigcup_{x\in f^{-1}(U_b)}J_{Df}(x,D(f(x)\leq b)^{-1}(V))
\end{displaymath}
Thus, $g$ is a continuous map.

Observe that $\pi^D_B g = f \pi^{Df}_X$.

Now, let $Z$ be a topological space and let $\alpha\colon Z\to X$ and $\beta\colon Z\to \int D$ be continuous maps such that $f\alpha=\pi^{D}_B\beta$. Note that for each $z\in Z$, $\beta(z)=(\beta_1(z),\beta_2(z))$ with $\beta_1(z)\in B$ and $\beta_2(z)\in D(\beta_1(z))$. And since $f\alpha=\pi^{D}_B\beta$ we obtain that $\beta_1(z)=f(\alpha(z))$ for all $z\in Z$. Let $\gamma\colon Z\to \int Df$ be defined by $\gamma(z)=(\alpha(z),\beta_2(z))$. It is clear that the function $\gamma$ is well-defined and satisfies $\pi^{Df}_X\gamma=\alpha$ and $g\gamma=\beta$. Moreover, $\gamma$ is the only function from $Z$ to $\int Df$ which satisfies this property. It remains to prove that $\gamma$ is continuous.

Let $x\in X$ and let $V$ be an open subset of $Df(x)$. It is not difficult to verify that
\begin{displaymath}
\gamma^{-1}(J_{Df}(x,V))=\alpha^{-1}(U_x)\cap \beta^{-1}(J_D(f(x),V))
\end{displaymath}
Therefore $\gamma$ is a continuous map.
\end{proof}

The following result is an easy consequence of the previous proposition.

\begin{prop}
Let $X$ and $B$ be connected Alexandroff spaces and let $f,g\colon X\to B$ be continuous maps such that $f\leq g$. Let $D\colon B\to \Top$ be a morphism-inverting functor. Then $\pi_X^{Df}\colon \int Df \to X $ and $\pi_X^{Dg}\colon \int Dg \to X$ are isomorphic fiber bundles.
\end{prop}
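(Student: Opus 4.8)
The plan is to produce an explicit natural isomorphism between the composite functors $Df$ and $Dg$ and then invoke Remark \ref{rem_int_is_functor}, which automatically upgrades a natural isomorphism of functors $X\to\Top$ into an isomorphism of their topological Grothendieck constructions as objects over $X$.

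First I would record that both constructions are genuine fiber bundles. Since $D$ is morphism-inverting and $f,g$ are order-preserving, the composites $Df$ and $Dg$ send each comparison $x_1\leq x_2$ in $X$ to $D(f(x_1)\leq f(x_2))$ and $D(g(x_1)\leq g(x_2))$ respectively, both of which are homeomorphisms; hence $Df$ and $Dg$ are morphism-inverting functors $X\to\Top$. As $X$ is connected and non-empty, Corollary \ref{coro_int_D_is_fiber_bundle} gives that $\pi_X^{Df}$ and $\pi_X^{Dg}$ are fiber bundles over $X$. (Alternatively, Proposition \ref{prop_canonical_representation_of_pullback} exhibits these maps as the pullbacks of the fiber bundle $\pi_B^D$ along $f$ and $g$, which again shows they are fiber bundles.)

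Next I would define, for each $x\in X$, the map $\tau_x=D(f(x)\leq g(x))\colon Df(x)\to Dg(x)$, which is meaningful because $f(x)\leq g(x)$ and which is a homeomorphism because $D$ is morphism-inverting. The only genuine verification is naturality: for $x_1\leq x_2$ in $X$ I must check that
\begin{displaymath}
D(g(x_1)\leq g(x_2))\,D(f(x_1)\leq g(x_1))=D(f(x_2)\leq g(x_2))\,D(f(x_1)\leq f(x_2)).
\end{displaymath}
By functoriality of $D$ the left-hand side equals $D(f(x_1)\leq g(x_2))$ via the chain $f(x_1)\leq g(x_1)\leq g(x_2)$, while the right-hand side equals $D(f(x_1)\leq g(x_2))$ via the chain $f(x_1)\leq f(x_2)\leq g(x_2)$. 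Both chains are legitimate because $f\leq g$ and $f,g$ are order-preserving, and since $B$ is a thin category the two arrows $f(x_1)\to g(x_2)$ coincide. Thus $\tau=\{\tau_x\}_{x\in X}$ is a natural transformation, and it is a natural isomorphism because each $\tau_x$ is a homeomorphism.

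Finally, Remark \ref{rem_int_is_functor} yields that $\tau_*\colon\int Df\to\int Dg$, given by $(x,y)\mapsto(x,\tau_x(y))$, is an isomorphism of maps over $X$ from $\pi_X^{Df}$ to $\pi_X^{Dg}$; combined with the first step this proves the two are isomorphic fiber bundles. I expect the only delicate point to be the naturality square, which is precisely where the hypothesis $f\leq g$ (rather than an arbitrary pair of maps) together with the thinness of $B$ is used; continuity and bijectivity of $\tau_*$ are then delivered for free by Remark \ref{rem_int_is_functor}.
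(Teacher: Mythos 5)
Your proof is correct and is essentially the paper's own argument: the paper obtains your $\tau$ as the whiskering $D\phi$ of the canonical natural transformation $\phi\colon f\Rightarrow g$ coming from $f\leq g$, and then likewise invokes Remark \ref{rem_int_is_functor}. Your explicit naturality check is just the unwound form of that whiskering, so the two proofs coincide.
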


\begin{proof}
Note that the continuous maps $f$ and $g$ can be regarded as functors, and since $f \leq g$, there exists a natural transformation $\phi\colon f\Rightarrow g$ (where, for each $x$ in $X$, $\phi_x$ is the only morphism from $f(x)$ to $g(x)$). Thus, we have a natural transformation $D\phi\colon Df\Rightarrow Dg$, which is a natural isomorphism since $D$ is a morphism-inverting functor. The result then follows from \ref{rem_int_is_functor}.
\end{proof}

\begin{prop} \label{prop_coro_canonical_representation_of_pullback}
Let $B$ and $X$ be Alexandroff spaces, let $p\colon E\to B$ be a fiber bundle with T$_0$ fiber and let $f\colon X\to B$ be a continuous map. Let $\D_p\colon B\to \Top$ be the canonical representation of the fiber bundle $p$. Consider the pullback diagram
\begin{center}
\begin{tikzpicture}[x=2cm,y=2cm]
\draw (0,0) node (X) {$X$};
\draw (0,1) node (P) {$P$};
\draw (1,0) node (B) {$B$};
\draw (1,1) node (E) {$E$};
\draw (0.5,0.5) node {\textnormal{pull}};

\draw[->] (X)--(B) node [midway,below] {$f$};
\draw[->] (P)--(E) ;
\draw[->] (P)--(X) node [midway,left] {$q$};
\draw[->] (E)--(B) node [midway,right] {$p$};
\end{tikzpicture}
\end{center}
Then, the canonical representation $\D_q$ is naturally isomorphic to $\D_p f$.
\end{prop}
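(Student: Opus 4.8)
The plan is to deduce the statement from the structural results already established, avoiding any direct computation with trivializations. First I would record that $\D_p f$ is a morphism-inverting functor $X\to\Top$: indeed $\D_p$ is morphism-inverting and $f$, regarded as an order-preserving map, is a functor, so their composite inverts morphisms. Moreover each value $(\D_p f)(x)=\D_p(f(x))=\Ko(p^{-1}(f(x)))$ is homeomorphic to the T$_0$--space $F$, because $p$ is a fiber bundle with fiber $F$ and $F$ is T$_0$, so that $p^{-1}(f(x))$ is a T$_0$--space homeomorphic to $F$ and hence equal to its own Kolmogorov quotient up to homeomorphism. Thus $\D_p f$ meets the hypotheses of lemma \ref{lemma_top0_vs_autF}.

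Next I would identify $q$ with the projection of a topological Grothendieck construction. By theorem \ref{theo_fiber_bundles_give_functors} the bundle $p$ is isomorphic over $B$ to $\pi^{\D_p}_B\colon\int\D_p\to B$. Pulling back along $f$ and using proposition \ref{prop_canonical_representation_of_pullback}, the pullback of $\pi^{\D_p}_B$ along $f$ is precisely $\pi^{\D_p f}_X\colon\int\D_p f\to X$. Since forming the pullback along $f$ sends bundles that are isomorphic over $B$ to bundles that are isomorphic over $X$ (uniqueness of pullbacks up to canonical isomorphism), it follows that $q$ is isomorphic over $X$ to $\pi^{\D_p f}_X$.

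Finally I would transport $\D_p f$ into $\aut(F)$ and close the loop. By lemma \ref{lemma_top0_vs_autF} there is a functor $\E\colon X\to\aut(F)$ with $\iota\E$ naturally isomorphic to $\D_p f$, where $\iota\colon\aut(F)\to\Top$ is the inclusion; by remark \ref{rem_int_is_functor} this natural isomorphism yields a bundle isomorphism $\pi^{\iota\E}_X\cong\pi^{\D_p f}_X\cong q$. Proposition \ref{prop_functor_to_Aut_F_natural_iso} then gives $\D_{\pi^{\iota\E}_X}\cong\iota\E$, while proposition \ref{prop_canonical_representations_naturally_isomorphic}, applied to the isomorphic bundles $q$ and $\pi^{\iota\E}_X$ over $X$ with common fiber $F$, gives $\D_q\cong\D_{\pi^{\iota\E}_X}$. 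Chaining these natural isomorphisms produces $\D_q\cong\D_{\pi^{\iota\E}_X}\cong\iota\E\cong\D_p f$, as desired.

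The step I expect to be the only real obstacle is the isomorphism $q\cong\pi^{\D_p f}_X$ of the middle paragraph: it hinges on the compatibility of the pullback operation with the identification of $p$ and $\pi^{\D_p}_B$, that is, on the fact that isomorphic objects over $B$ have isomorphic pullbacks over $X$. Once this is in place the remainder is formal bookkeeping with the cited statements. Should a more hands-on argument be preferred, one could instead exhibit the fiberwise homeomorphisms $\alpha_x\colon q^{-1}(x)\to p^{-1}(f(x))$ induced by the canonical map $P\to E$ and verify naturality against the structure maps $\delta_{f(x_1),f(x_2)}$, computing $\D_q(x_1\leq x_2)$ from a pulled-back trivialization via remark \ref{rem_Dp_morphism_inverting_T0_fiber}; this works but is longer than the structural route.
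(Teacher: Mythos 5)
Your proposal is correct and follows essentially the same route as the paper's proof: both identify $q$ up to isomorphism with $\pi^{\D_p f}_X$ via \ref{theo_fiber_bundles_give_functors} and \ref{prop_canonical_representation_of_pullback}, then transfer to functors into $\aut(F)$ via \ref{lemma_top0_vs_autF} and close with \ref{prop_canonical_representations_naturally_isomorphic} and \ref{prop_functor_to_Aut_F_natural_iso} (the paper packages this last bookkeeping step through \ref{theo_classification_fiber_bundles_Grothendieck_construction}, which is built from the same propositions you cite). The only difference is presentational.
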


\begin{proof}
By \ref{theo_fiber_bundles_give_functors}, the fiber bundle $\pi^{\D_p}_B\colon \int \D_p \to B$ is isomorphic to $p$ and the fiber bundle $\pi^{\D_q}_X\colon \int \D_q \to X$ is isomorphic to $q$. Thus, we have a pullback diagram
\begin{center}
\begin{tikzpicture}[x=2cm,y=2cm]
\draw (0,0) node (X) {$X$};
\draw (0,1) node (P) {$\int \D_q$};
\draw (1,0) node (B) {$B$};
\draw (1,1) node (E) {$\int \D_p$};
\draw (0.5,0.5) node {\textnormal{pull}};

\draw[->] (X)--(B) node [midway,below] {$f$};
\draw[->] (P)--(E) ;
\draw[->] (P)--(X) node [midway,left] {$\pi^{\D_q}_X$};
\draw[->] (E)--(B) node [midway,right] {$\pi^{\D_p}_B$};
\end{tikzpicture}
\end{center}
Hence, by \ref{prop_canonical_representation_of_pullback}, the fiber bundles $\pi^{\D_q}_X$ and $\pi^{\D_p f}_X$ are isomorphic. 

Let $\iota\colon \aut(F)\to \Top$ be the inclusion functor. By \ref{lemma_top0_vs_autF} there exist functors $\E_p,\E_q\colon X\to \aut(F)$ such that $\iota \E_p$ is naturally isomorphic to $\D_p f$ and $\iota \E_q$ is naturally isomorphic to $\D_q$. By \ref{rem_int_is_functor}, the fiber bundle $\pi^{\iota \E_p}_X$ is isomorphic to $\pi^{\D_p f}_X$ and the fiber bundle $\pi^{\iota \E_q}_X$ is isomorphic to $\pi^{\D_q}_X$. Thus, $\pi^{\iota \E_p}_X$ and $\pi^{\iota \E_q}_X$ are isomorphic fiber bundles and hence, from \ref{theo_classification_fiber_bundles_Grothendieck_construction}, we obtain that $\E_p$ and $\E_q$ are naturally isomorphic. Thus, the functors $\D_q$ and $\D_p f$ are naturally isomorphic.
\end{proof}

\begin{prop} \label{prop_equivalences_isomorphic_pullback_bundles}
Let $X$ and $Y$ be non-empty Alexandroff spaces and let $F$ be a T$_0$--space. Let $p\colon E\to Y$ be a fiber bundle with fiber $F$ and let $f,g\colon X\to Y$ be continuous maps. Let $p_f\colon E_f\to X$ be the pullback of $p$ along $f$ and let $p_g\colon E_g\to X$ be the pullback of $p$ along $g$.

Let $\D_p\colon Y \to \Top$ be the canonical representation of the fiber bundle $p$, let $\iota\colon \aut(F)\to \Top$ be the inclusion functor and let $\E_p\colon Y \to \aut(F)$ be a functor which satisfies that $\iota \E_p$ and $\D_p$ are naturally isomorphic.

Let $A\subseteq X$ be such that for each connected component $C$ of $X$, $\#(A\cap C)=1$. For each $x_0\in A$ let $(\E_p f)_{\ast,x_0},(\E_p g)_{\ast,x_0}\colon \pi_1(X,x_0) \to \pi_1(\aut(F),F)\cong\aut(F)$ be the group homomorphisms induced by the functors $\E_p f$ and $\E_p g$ respectively.

The following are equivalent:
\begin{enumerate}
\item The fiber bundles $p_f$ and $p_g$ are isomorphic.
\item The functors $\D_p f$ and $\D_p g$ are naturally isomorphic.
\item For each $x_0\in A$ there exists $\nu_{x_0}\in\Inn(\aut(F))$ such that $(\E_p g)_{\ast,x_0}=\nu_{x_0} (\E_p f)_{\ast,x_0}$.
\end{enumerate}
\end{prop}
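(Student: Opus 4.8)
The plan is to prove the cycle of equivalences in two parts: first $(1)\Leftrightarrow(2)$, using the behaviour of canonical representations under pullback, and then $(2)\Leftrightarrow(3)$, by translating the natural-isomorphism condition into a statement about functors into the groupoid $\aut(F)$ and invoking the classification of such functors up to natural isomorphism.

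For $(1)\Leftrightarrow(2)$: since $p_f$ and $p_g$ are pullbacks of the fiber bundle $p$ (with T$_0$ fiber $F$) along $f$ and $g$, they are again fiber bundles over $X$ with fiber $F$, so their canonical representations $\D_{p_f}$ and $\D_{p_g}$ are defined. By \ref{prop_coro_canonical_representation_of_pullback} we have natural isomorphisms $\D_{p_f}\cong\D_p f$ and $\D_{p_g}\cong\D_p g$, and by \ref{prop_canonical_representations_naturally_isomorphic} the bundles $p_f$ and $p_g$ are isomorphic if and only if $\D_{p_f}$ and $\D_{p_g}$ are naturally isomorphic. Combining these, $p_f\cong p_g$ if and only if $\D_p f\cong\D_p g$, which is exactly $(1)\Leftrightarrow(2)$.

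Next I would reduce $(2)$ to a statement in $\aut(F)$. Precomposing the given natural isomorphism $\iota\E_p\cong\D_p$ with $f$ and with $g$ yields $\iota\E_p f\cong\D_p f$ and $\iota\E_p g\cong\D_p g$, so $(2)$ holds if and only if $\iota\E_p f$ and $\iota\E_p g$ are naturally isomorphic as functors $X\to\Top$. Since both of these functors are constantly equal to $F$ on objects, the components of any natural isomorphism between them are self-homeomorphisms of $F$, i.e. arrows of the groupoid $\aut(F)$, and the naturality squares coincide with those in $\aut(F)$. Hence a natural isomorphism $\iota\E_p f\cong\iota\E_p g$ is precisely the datum of a natural isomorphism $\E_p f\cong\E_p g$, so $(2)$ is equivalent to $\E_p f$ and $\E_p g$ being naturally isomorphic functors $X\to\aut(F)$. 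Because $\aut(F)$ is a groupoid, these functors are morphism-inverting and factor uniquely through $\iota_X\colon X\to\loc X$; as all components in play are isomorphisms, precomposition with $\iota_X$ identifies natural isomorphisms $\E_p f\cong\E_p g$ with natural isomorphisms of the induced functors $\loc X\to\aut(F)$. Using $\loc X\cong\Pi_1(X)$ and the fact that $\loc$ (being left adjoint to the inclusion $\grpd\hookrightarrow\cat$) sends the decomposition of $X$ into connected components to a coproduct decomposition of $\loc X$, both functors and any natural isomorphism split over the connected components of $X$. Since $A$ meets each component in exactly one point, it then suffices to analyse a single component $C$ with basepoint $x_0=A\cap C$, where $\loc C$ is a connected groupoid with vertex group $\aut_{\loc C}(x_0)\cong\pi_1(X,x_0)$.

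The final and main step is the classification of functors from a connected groupoid to a group $G$: I will show that two such functors $\phi,\psi$ are naturally isomorphic if and only if the homomorphisms they induce on the vertex group are conjugate in $G$, and then apply this with $G=\aut(F)\cong\pi_1(\aut(F),F)$ to the homomorphisms $(\E_p f)_{\ast,x_0}$ and $(\E_p g)_{\ast,x_0}$, obtaining condition $(3)$ component by component. The easy direction is that a natural isomorphism $\theta$ forces $\psi(\gamma)=\theta_{x_0}\phi(\gamma)\theta_{x_0}^{-1}$ on loops $\gamma$ at $x_0$, so the induced homomorphisms differ by conjugation by $\theta_{x_0}$, an inner automorphism. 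I expect the converse to be the main obstacle: given a conjugating element, one must actually construct a natural isomorphism by choosing, for each object $x$ of the connected groupoid $\loc C$, a morphism $\delta_x\colon x_0\to x$ with $\delta_{x_0}=\mathrm{Id}$, transporting the conjugating element along $\delta_x$ to define the component at $x$, and then verifying the naturality square, which reduces to the hypothesis that the two homomorphisms agree after conjugation on the vertex group. Carrying this bookkeeping out uniformly over all components, together with confirming that the factorization and natural-isomorphism correspondence through $\loc X$ is genuinely a bijection, are the points requiring care, but they are routine once the single-component classification is in hand.
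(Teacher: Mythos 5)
Your proposal is correct and follows essentially the same route as the paper: the same two propositions give $(1)\Leftrightarrow(2)$, and the reduction $(2)\Leftrightarrow(3)$ proceeds by passing from $\D_p$ to $\E_p$, factoring through $\loc X\cong\Pi_1(X)$, splitting over connected components, and classifying functors into the group $\aut(F)$ up to conjugation. The only cosmetic difference is that the paper reduces to the full subcategory on the basepoint set $A$ (an equivalence of groupoids) before reading off the inner-automorphism condition, whereas you construct the natural isomorphism on a connected component directly by choosing morphisms $\delta_x\colon x_0\to x$; these amount to the same argument.
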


\begin{proof}
By \ref{prop_canonical_representations_naturally_isomorphic}, the fiber bundles $p_f$ and $p_g$ are isomorphic if and only if their canonical representations $\D_{p_f}$ and $\D_{p_g}$ are naturally isomorphic functors. By \ref{prop_coro_canonical_representation_of_pullback} this holds if and only if the functors $\D_p f$ and $\D_p g$ are naturally isomorphic.

Now, observe that the functors $\D_p f$ and $\D_p g$ are naturally isomorphic if and only if the functors $\E_p f$ and $\E_p g$ are naturally isomorphic. Let $\loc X$ be the localization of $X$ and let $\iota_X\colon X\to \loc X$ be defined as in subsection \ref{subsect_fundamental_groupoid}. Let $\overline{\E}_f,\overline{\E}_g\colon \loc X\to\aut(F)$ be functors such that $\overline{\E}_f \iota_X=\E_p f$ and $\overline{\E}_g \iota_X=\E_p g$. Let $\A$ be the full subcategory of $\loc X$ whose set of objects is $A$ and let $i_{\A}\colon \A \to \loc X$ be the inclusion functor. Observe that the functors $\E_p f$ and $\E_p g$ are naturally isomorphic if and only if the functors $\overline{\E}_f$ and $\overline{\E}_g$ are naturally isomorphic. And this holds if and only if the functors $\overline{\E}_f i_{\A}$ and $\overline{\E}_g  i_{\A}$ are naturally isomorphic since $i_{\A}$ is an equivalence of categories. 

For each $x_0\in A$ let $\aut_{\loc X}(x_0)$ be the full subcategory of $\loc X$ whose only object is $x_0$ and let $i_{x_0}\colon \aut_{\loc X}(x_0) \to \A$ be the inclusion functor. Observe that the functors $\overline{\E}_f i_{\A}$ and $\overline{\E}_g  i_{\A}$ are naturally isomorphic if and only if for each $x_0\in A$ there exists $\nu_{x_0}\in\Inn(\aut(F))$ such that $\overline{\E}_g i_{\A} i_{x_0} = \nu_{x_0}\overline{\E}_f  i_{\A} i_{x_0}$.

Let $\cat_*$ be the category of pointed small categories and basepoint-preserving functors. Recall that there exists a natural isomorphism $\zeta\colon \aut_{\loc(\cdot)}(\cdot)\cong \pi_1(B(\cdot),\cdot)\colon \cat_*\to\grp$ \cite{quillen1973higher} (see also \cite[Corollary 3.5]{cianci2019coverings}). For each $x_0\in A$ let $\alpha=\zeta_{(X,x_0)}^{-1}\colon \pi_1(X,x_0)\to \aut_{\loc X}(x_0)$. From the naturality of $\zeta$ it follows that $(\E_p f)_{\ast,x_0}=\overline{\E}_f  i_{\A} i_{x_0}  \alpha$ and $(\E_p g)_{\ast,x_0}=\overline{\E}_g  i_{\A} i_{x_0}  \alpha$. Thus, there exists $\nu_{x_0}\in\Inn(\aut(F))$ such that $\overline{\E}_g i_{\A} i_{x_0} = \nu_{x_0}\overline{\E}_f  i_{\A} i_{x_0}$ if and only if there exists $\nu_{x_0}\in\Inn(\aut(F))$ such that $(\E_p g)_\ast=\nu_{x_0} (\E_p f)_\ast$. The result follows.
\end{proof}

In \cite{thomason1980cat}, Thomason proves that $\cat$ admits a closed model category structure which is Quillen equivalent to the usual model category structure of the category of simplicial sets. Recall that a functor $F\colon C\to D$ in $\cat$ is a weak equivalence if and only if the induced continuous map $BF\colon BC\to BD$ is a homotopy equivalence. Thomason also proves that the cofibrant objects of this model category structure are posets. 

Several authors study which posets are cofibrant objects of the Thomason model structure. Bruckner and Pegel prove in \cite{bruckner2016cofibrant} that various classes of posets are cofibrant objects and May, Stephan and Zakharevich give in \cite{may2017homotopy} a poset of six elements which is not cofibrant. Recall also that the double subdivision of a poset is a cofibrant object in $\cat$ (see \cite[Proposition 4.6]{thomason1980cat}).

\begin{definition}
Let $X$ be an Alexandroff T$_0$--space and let $A\subseteq X$ be such that for each connected component $C$ of $X$, $\#(A\cap C)=1$. Let $F$ be a T$_0$--space and let $\mathcal{U}_F\colon Q\aut(F)\to\aut(F)$ be the cofibrant replacement of $\aut(F)$ in $\cat$ (recall that $Q\aut(F)$ is a poset). Let $f,g\colon X \to Q\aut(F)$ be continuous maps.

We say that $f$ is \emph{equivalent} to $g$ if for each $x_0\in A$ there exists $\nu_{x_0}\in\Inn(\aut(F))$ such that 
\begin{displaymath}
(\mathcal{U}_F g)_\ast=\nu_{x_0} (\mathcal{U}_F f)_\ast\colon \pi_1(X,x_0) \to \pi_1(\aut(F),F)\cong\aut(F).
\end{displaymath}
\end{definition}

It is not difficult to verify that the definition of equivalent maps does not depend on the subset $A$.

\begin{theo}
Let $F$ be a T$_0$--space and let $\mathcal{U}_F\colon Q\aut(F)\to\aut(F)$ be the cofibrant replacement of $\aut(F)$ in $\cat$. 

Then, for every non-empty cofibrant object $X$ of $\cat$ there exists a canonical bijection between isomorphism classes of fiber bundles over $X$ with fiber $F$ and equivalence classes of continuous maps from $X$ to $Q\aut(F)$. 
\end{theo}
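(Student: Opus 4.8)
The plan is to factor the desired correspondence as a composite of two bijections: first realizing fiber bundles through functors to $\aut(F)$, and then realizing those functors through continuous maps into the cofibrant replacement $Q\aut(F)$. To begin, since $X$ is a non-empty cofibrant object of $\cat$, Thomason's theorem guarantees that $X$ is a poset, hence a non-empty Alexandroff T$_0$--space. Thus Theorem \ref{theo_classification_fiber_bundles_Grothendieck_construction} applies and furnishes a canonical bijection between $[\Fib_X(F)]$, the set of isomorphism classes of fiber bundles over $X$ with fiber $F$, and $[X,\aut(F)]$, the set of natural-isomorphism classes of functors $X\to\aut(F)$. It therefore suffices to construct a canonical bijection between equivalence classes of continuous maps $X\to Q\aut(F)$ and $[X,\aut(F)]$.

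Since $X$ and $Q\aut(F)$ are both posets, a continuous map $f\colon X\to Q\aut(F)$ is the same as an order-preserving map, i.e. a functor in $\cat$, and composing with $\mathcal{U}_F$ produces a functor $\mathcal{U}_F f\colon X\to\aut(F)$. I would then define
\begin{displaymath}
\Phi\colon \{\textnormal{continuous maps } X\to Q\aut(F)\}/\!\sim\ \longrightarrow\ [X,\aut(F)], \qquad \Phi([f])=[\mathcal{U}_F f],
\end{displaymath}
and prove it is a bijection. For well-definedness and injectivity, the essential observation is that the relation $\sim$ was defined precisely by the conjugacy condition on the induced homomorphisms $(\mathcal{U}_F f)_{\ast,x_0}$, $x_0\in A$. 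On the other hand, the argument establishing the equivalence of statements (2) and (3) in the proof of Proposition \ref{prop_equivalences_isomorphic_pullback_bundles} applies verbatim to any pair of functors $X\to\aut(F)$, since it only uses the localization of $X$ together with the natural isomorphism $\zeta$; it shows that functors $C,D\colon X\to\aut(F)$ are naturally isomorphic if and only if for each $x_0\in A$ there is $\nu_{x_0}\in\Inn(\aut(F))$ with $D_{\ast,x_0}=\nu_{x_0}C_{\ast,x_0}$. Applying this to $C=\mathcal{U}_F f$ and $D=\mathcal{U}_F g$ shows that $f\sim g$ if and only if $\mathcal{U}_F f$ and $\mathcal{U}_F g$ are naturally isomorphic, so $\Phi$ is well defined and injective.

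For surjectivity I would invoke the cofibrancy of $X$. Since $\mathcal{U}_F$ is a cofibrant replacement, it arises from a factorization of $\varnothing\to\aut(F)$ and is therefore a trivial fibration in the Thomason model structure, while $\varnothing\to X$ is a cofibration because $X$ is cofibrant. Given an arbitrary functor $C\colon X\to\aut(F)$, the lifting axiom applied to the square with top edge $\varnothing\to Q\aut(F)$, bottom edge $C$, left edge $\varnothing\to X$ and right edge $\mathcal{U}_F$ produces a functor $f\colon X\to Q\aut(F)$ with $\mathcal{U}_F f=C$; as source and target are posets, $f$ is automatically continuous, and $\Phi([f])=[C]$. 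Hence $\Phi$ is a bijection, and composing it with the bijection of Theorem \ref{theo_classification_fiber_bundles_Grothendieck_construction} yields the asserted canonical bijection. The main obstacle is exactly this last step: the earlier verifications are bookkeeping with the conjugacy criterion, whereas surjectivity genuinely requires both that the cofibrant object $X$ lifts against the acyclic fibration $\mathcal{U}_F$ and that such a lift remains order-preserving, the latter being automatic only because both $X$ and $Q\aut(F)$ are posets.
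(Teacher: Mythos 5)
Your proof is correct and follows essentially the same route as the paper's: cofibrancy forces $X$ to be a poset, the conjugacy criterion extracted from the proof of Proposition \ref{prop_equivalences_isomorphic_pullback_bundles} gives well-definedness and injectivity, and lifting against the trivial fibration $\mathcal{U}_F$ gives surjectivity. The only cosmetic difference is that you package the bijection as the composite of $f\mapsto\mathcal{U}_F f$ with the classification of Theorem \ref{theo_classification_fiber_bundles_Grothendieck_construction}, whereas the paper sends $f$ directly to the pullback of the universal bundle $\pi^{\iota\mathcal{U}_F}_{Q\aut(F)}$ along $f$; these descriptions agree by Proposition \ref{prop_canonical_representation_of_pullback}.
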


\begin{proof}
Let $X$ be a non-empty cofibrant object of $\cat$. Thus, $X$ is a poset. Let $A\subseteq X$ be such that for each connected component $C$ of $X$, $\#(A\cap C)=1$. Let $[\Fib_X(F)]$ denote the set of isomorphism classes of fiber bundles over $X$ with fiber $F$ and let $\mathcal{F}$ denote the set of equivalence classes of continuous maps from $X$ to $Q\aut(F)$. Let $\iota\colon \aut(F)\to \Top$ be the inclusion functor. 
Let $\xi\colon \mathcal{F} \to [\Fib_X(F)]$ be the function that sends the equivalence class of a continuous map $f\colon X \to Q\aut(F)$ to the pullback of the fiber bundle $\pi_{Q\aut(F)}^{\iota \mathcal{U}_F}\colon \int \iota\mathcal{U}_F \to Q\aut(F)$ along $f$.

We will prove first that $\xi$ is well-defined. Suppose that $f,g\colon X \to Q\aut(F)$ are equivalent maps. Then for each $x_0\in A$ there exists $\nu_{x_0}\in\Inn(\aut(F))$ such that $(\mathcal{U}_F g)_\ast=\nu_{x_0} (\mathcal{U}_F f)_\ast\colon \pi_1(X,x_0) \to \pi_1(\aut(F),F)\cong\aut(F)$. By \ref{prop_functor_to_Aut_F_natural_iso}, the canonical representation of the fiber bundle $\pi_{Q\aut(F)}^{\iota\mathcal{U}_F}$ is naturally isomorphic to the functor $\iota \mathcal{U}_F$. Thus, by \ref{prop_equivalences_isomorphic_pullback_bundles}, the pullback bundles $\xi(f)$ and $\xi(g)$ are isomorphic. Hence $\xi$ is well-defined.

Observe that injectivity of $\xi$ follows from \ref{prop_equivalences_isomorphic_pullback_bundles} and the argument of the previous paragraph.

Now we will prove that $\xi$ is surjective. Let $p\colon E\to X$ be a fiber bundle over $X$ with fiber $F$ and let $\D_p\colon X\to \Top$ be its canonical representation. Let $\E_p\colon X \to \aut(F)$ be a functor which satisfies that $\iota \E_p$ and $\D_p$ are naturally isomorphic. Since $X$ is cofibrant and $\mathcal{U}_F$ is a trivial fibration, there exists a functor $f\colon X \to Q\aut(F)$ such that $\mathcal{U}_F f= \E_p$. Note that $f$ can be regarded as a continuous map between Alexandroff T$_0$--spaces. Let $\pi=\pi_{Q\aut(F)}^{\iota \mathcal{U}_F}$ and let $p_f$ be the pullback of $\pi$ along $f$. By \ref{prop_coro_canonical_representation_of_pullback} the canonical representation $\D_{p_f}$ of $p_f$ is naturally isomorphic to $\D_{\pi} f$, which is naturally isomorphic to $\iota \mathcal{U}_F f = \iota \E_p$. And since $\iota \E_p$ and $\D_p$ are naturally isomorphic it follows that $p_f$ and $p$ are isomorphic fiber bundles by \ref{prop_canonical_representations_naturally_isomorphic}.
\end{proof}

\begin{rem}
The previous theorem and its proof show that if $F$ is a T$_0$--space and $\mathcal{U}_F\colon Q\aut(F)\to\aut(F)$ is the cofibrant replacement of $\aut(F)$ in $\cat$ then the fiber bundle $\pi_{Q\aut(F)}^{\mathcal{U}_F}\colon \int \mathcal{U}_F \to Q\aut(F)$ serves as a universal bundle with fiber $F$ for bundles over posets which are cofibrant objects in $\cat$.
\end{rem}

\section{Fiber bundles over Alexandroff spaces are Hurewicz fibrations}

In this section we will use theorem \ref{theo_fiber_bundles_give_functors} to prove that fiber bundles over Alexandroff spaces are Hurewicz fibrations. To this end we will prove the following lemma for which we need to recall that a covering space of an Alexandroff space is an Alexandroff space \cite[Proposition 2.1]{barmak2016note} and that if $p\colon E\to B$ is a covering map between Alexandroff spaces then for every $e\in E$ the restriction $p|_{U_e}\colon U_e\to U_{p(e)}$ is a homeomorphism (see proof of \cite[Proposition 2.9]{barmak2016note}).

If $X$ and $Y$ are topological spaces, $K$ is a compact subspace of $X$ and $U$ is an open subset of $Y$ we denote $W(K,U)=\{f\colon X\to Y \st f \textnormal{ is a continuous map and }f(K)\subseteq U\}$.

\begin{lemma} \label{lemma_path_space_locally_connected}
Let $B$ be an Alexandroff space and let $\gamma\colon I \to B$ be a continuous map. Then there exist an open subset $V\subseteq B^I$ such that $\gamma\in V\subseteq W(\{0\},U_{\gamma(0)})\cap W(\{1\},U_{\gamma(1)})$ satisfying that for all $\alpha\in V$ and for all paths $\eta_0,\eta_1 \in B^I$ such that $\eta_0(0)=\gamma(0)$, $\eta_0(1)=\alpha(0)$, $\eta_0(I)\subseteq U_{\gamma(0)}$, $\eta_1(0)=\alpha(1)$, $\eta_1(1)=\gamma(1)$ and $\eta_1(I)\subseteq U_{\gamma(1)}$ it holds that $\gamma\simp \eta_0\ast \alpha \ast \eta_1$.
\end{lemma}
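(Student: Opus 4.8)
The plan is to reduce every $\alpha\in V$ to a fixed ``spine'' path built from the canonical paths $\eta(a\leq b)$, exploiting McCord's theorem that each minimal open set $U_x$ is contractible, hence simply connected; the consequence I will use repeatedly is that \emph{any two paths in $U_x$ with the same endpoints are path-homotopic in $U_x$, and therefore in $B$.}

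First I would build $V$. Since $\gamma^{-1}(U_{\gamma(0)})$ and $\gamma^{-1}(U_{\gamma(1)})$ are open neighbourhoods of $0$ and $1$, and $\{\gamma^{-1}(U_x)\}_{x\in B}$ is an open cover of the compact metric space $I$, the Lebesgue number lemma lets me choose a partition $0=t_0<t_1<\cdots<t_n=1$ with $n\geq 2$ and elements $c_1,\dots,c_n\in B$ such that $\gamma([t_{i-1},t_i])\subseteq U_{c_i}$ for every $i$, and moreover (by taking the mesh small enough that the first interval lands in $U_{\gamma(0)}$ and the last in $U_{\gamma(1)}$) with $c_1=\gamma(0)$ and $c_n=\gamma(1)$. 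I then set
\[
V=\Big(\bigcap_{i=1}^{n}W([t_{i-1},t_i],U_{c_i})\Big)\cap\Big(\bigcap_{i=1}^{n-1}W(\{t_i\},U_{\gamma(t_i)})\Big).
\]
This $V$ is open and contains $\gamma$, and $V\subseteq W([0,t_1],U_{\gamma(0)})\cap W([t_{n-1},1],U_{\gamma(1)})\subseteq W(\{0\},U_{\gamma(0)})\cap W(\{1\},U_{\gamma(1)})$, which gives the required inclusion. The crucial feature is that the factors $W(\{t_i\},U_{\gamma(t_i)})$ force $\alpha(t_i)\leq\gamma(t_i)$ for every $\alpha\in V$ and every interior node.

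Second, I would put both paths in a normal form. For $\alpha\in V$ write $g_i=\gamma_{[t_{i-1},t_i]}$ and $a_i=\alpha_{[t_{i-1},t_i]}$; the reparametrisation fact from the Preliminaries gives $\gamma\simp g_1*\cdots*g_n$ and $\alpha\simp a_1*\cdots*a_n$, with every $g_i,a_i$ having image in the simply connected set $U_{c_i}$. Replacing each $g_i$ by $\eta(\gamma(t_{i-1})\leq c_i)*\overline{\eta(\gamma(t_i)\leq c_i)}$ and each $a_i$ by $\eta(\alpha(t_{i-1})\leq c_i)*\overline{\eta(\alpha(t_i)\leq c_i)}$, and writing $w_i^\gamma=\overline{\eta(\gamma(t_i)\leq c_i)}*\eta(\gamma(t_i)\leq c_{i+1})$ and $w_i^\alpha=\overline{\eta(\alpha(t_i)\leq c_i)}*\eta(\alpha(t_i)\leq c_{i+1})$ (each a path from $c_i$ to $c_{i+1}$), the adjacent $\overline{\eta}*\eta$ blocks telescope. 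Since $c_1=\gamma(0)$ and $c_n=\gamma(1)$ the two extreme blocks of $\gamma$ are constant, and after replacing $\eta_0,\eta_1$ by the canonical paths $\overline{\eta(\alpha(0)\leq c_1)}$ and $\eta(\alpha(1)\leq c_n)$ inside the simply connected sets $U_{\gamma(0)}$ and $U_{\gamma(1)}$, the outermost blocks cancel as well. This yields $\gamma\simp w_1^\gamma*\cdots*w_{n-1}^\gamma$ and $\eta_0*\alpha*\eta_1\simp w_1^\alpha*\cdots*w_{n-1}^\alpha$.

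Finally I would prove $w_i^\gamma\simp w_i^\alpha$ rel $\{c_i,c_{i+1}\}$ for each interior node, which completes the argument by concatenation. This is exactly where $\alpha(t_i)\leq\gamma(t_i)$ enters: in $U_{c_i}$ the paths $\eta(\alpha(t_i)\leq c_i)$ and $\eta(\alpha(t_i)\leq\gamma(t_i))*\eta(\gamma(t_i)\leq c_i)$ share endpoints, hence are path-homotopic, and likewise in $U_{c_{i+1}}$; substituting these into $w_i^\alpha$ and cancelling the resulting $\overline{\eta(\alpha(t_i)\leq\gamma(t_i))}*\eta(\alpha(t_i)\leq\gamma(t_i))$ leaves precisely $w_i^\gamma$. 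I expect this comparability step to be the main obstacle: without the conditions $W(\{t_i\},U_{\gamma(t_i)})$, the points $\gamma(t_i)$ and $\alpha(t_i)$ could be incomparable minimal points lying below both $c_i$ and $c_{i+1}$, so that $w_i^\gamma*\overline{w_i^\alpha}$ would trace an essential loop in a non-Hausdorff circle and the two spine segments would fail to be homotopic. Forcing $\alpha(t_i)\leq\gamma(t_i)$ is exactly what rules this out, and once it holds the cancellation is routine.
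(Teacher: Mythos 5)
Your argument is correct, but it proceeds along a genuinely different route from the one in the paper. The paper passes to the universal cover $p\colon\widetilde B\to B$ (which exists for Alexandroff spaces by the results of Barmak cited in the text), obtains the neighbourhood $V$ implicitly from the continuity of the unique path-lifting function $\lambda\colon B^I\times_p\widetilde B\to\widetilde B^I$ at the point $(\gamma,\widetilde b)$, and then concludes by lifting $\eta_0\ast\alpha\ast\eta_1$ to a path in the simply connected space $\widetilde B$ with the same endpoints as $\widetilde\gamma$. You instead avoid covering-space theory altogether: you produce $V$ explicitly as a finite intersection of subbasic compact--open sets coming from a Lebesgue-number subdivision $0=t_0<\dots<t_n=1$ with $\gamma([t_{i-1},t_i])\subseteq U_{c_i}$, and you assemble the homotopy by hand inside the contractible (hence simply connected) minimal open sets $U_{c_i}$, reducing both $\gamma$ and $\eta_0\ast\alpha\ast\eta_1$ to the same ``spine'' $w_1\ast\dots\ast w_{n-1}$ of canonical paths $\eta(a\leq b)$; the extra factors $W(\{t_i\},U_{\gamma(t_i)})$ are exactly what forces $\alpha(t_i)\leq\gamma(t_i)$ and makes the segmentwise comparison $w_i^\alpha\simp w_i^\gamma$ go through, and your closing remark correctly identifies why the lemma would fail without them. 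The trade-off is clear: the paper's proof is shorter once the existence of the universal cover and the continuity of $\lambda$ are granted, while yours is longer but more elementary and self-contained (it uses only McCord's contractibility of the $U_x$ and the compact-open topology), and it has the side benefit of exhibiting a concrete basic neighbourhood $V$ rather than one extracted from a continuity statement. All the individual steps you outline (the choice of $c_1=\gamma(0)$, $c_n=\gamma(1)$ via a sufficiently fine partition, the replacement of $\gamma_{[t_{i-1},t_i]}$ and $\alpha_{[t_{i-1},t_i]}$ by up-and-down canonical paths within $U_{c_i}$, the cancellation of the outer blocks against $\eta_0$ and $\eta_1$, and the rel-endpoint homotopy $w_i^\alpha\simp w_i^\gamma$ using $\alpha(t_i)\leq\gamma(t_i)$) are valid, so I see no gap.
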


\begin{proof}
Without loss of generality we may assume that $B$ is connected, since we may replace $B$ with the connected component of $B$ that contains $\gamma(0)$ (note that, if $C$ is a connected component of $B$, then $C$ is an open subset of $B$ and thus $C^I=W(I,C)$ is an open subset of $B^I$).

Let $\widetilde B$ be the universal cover of $B$ and let $p\colon \widetilde B \to B$ be the associated covering map. Let $\widetilde b\in p^{-1}(\gamma(0))$ and let $\widetilde \gamma \in \widetilde B^I$ be the unique lift of $\gamma$ such that $\widetilde \gamma(0)=\widetilde b$. Let $p_0\colon U_{\widetilde \gamma(0)} \to U_{\gamma(0)}$ and $p_1\colon U_{\widetilde \gamma(1)} \to U_{\gamma(1)}$ be restrictions of $p$. Note that $p_0$ and $p_1$ are homeomorphisms.

Let $B^I \times_p \widetilde B$ be the pullback of the diagram $B^I \overset{\ev_0}{\longrightarrow} B \overset{p}{\longleftarrow} \widetilde B$. As usual, we regard $B^I \times_p \widetilde B$ as a subspace of the product space $B^I \times \widetilde B$. Let $\lambda\colon B^I \times_p \widetilde B \to \widetilde B^I$ be the unique path lifting function for $p$. Since $\lambda^{-1}(\ev_1^{-1}(U_{\widetilde \gamma(1)}))$ is an open neighbourhood of $(\gamma,\widetilde b)$ in $B^I \times_p \widetilde B$ there exists an open subset $V'\subseteq B^I$ such that $(\gamma,\widetilde b) \in V'\times U_{\widetilde b}$ and $(V'\times U_{\widetilde b})\cap (B^I \times_p \widetilde B)\subseteq \lambda^{-1}(\ev_1^{-1}(U_{\widetilde \gamma(1)}))$. Let $V= V'\cap W(\{0\},U_{\gamma(0)})\cap W(\{1\},U_{\gamma(1)})$. Note that $V$ is an open neighbourhood of $\gamma$.

We will prove now that the open subset $V$ satisfies the required condition. Let $\alpha\in V$ and let $\eta_0,\eta_1 \in B^I$ such that $\eta_0(0)=\gamma(0)$, $\eta_0(1)=\alpha(0)$, $\eta_0(I)\subseteq U_{\gamma(0)}$, $\eta_1(0)=\alpha(1)$, $\eta_1(1)=\gamma(1)$ and $\eta_1(I)\subseteq U_{\gamma(1)}$. Let $\widetilde \eta_0 = p_0^{-1}\eta_0$, let $\widetilde \eta_1 = p_1^{-1}\eta_1$ and let $\widetilde \alpha$ be the unique lift of $\alpha$ such that $\widetilde \alpha(0)= p_0^{-1}(\alpha(0))$. Note that $\widetilde \eta_0 (1) = p_0^{-1}\eta_0(1) = p_0^{-1}\alpha(0) = \widetilde \alpha(0)$. On the other hand, observe that $(\alpha,p_0^{-1}(\alpha(0)))\in (V\times U_{\widetilde b}) \cap (B^I \times_p \widetilde B) \subseteq \lambda^{-1}(\ev_1^{-1}(U_{\widetilde \gamma(1)}))$. Hence $\widetilde \alpha(1)\in U_{\widetilde \gamma(1)}$ and since $p\widetilde \alpha(1)=\alpha(1)=\eta_1(0)$ it follows that $\widetilde \alpha(1)= \widetilde \eta_1(0)$. Therefore, $(\widetilde \eta_0 \ast \widetilde \alpha ) \ast \widetilde \eta_1$ is a well-defined path in $\widetilde B$ from $\widetilde \eta_0(0)$ to $\widetilde \eta_1(1)$. 
Note that $\widetilde \eta_0(0)=p_0^{-1}\eta_0(0)=p_0^{-1}\gamma(0)=\widetilde b$ and similarly $\widetilde \eta_1(1)=\widetilde \gamma(1)$. Thus, $(\widetilde \eta_0 \ast \widetilde \alpha ) \ast \widetilde \eta_1$ and $\widetilde \gamma$ are paths in $\widetilde B$ from $\widetilde b$ to $\widetilde \gamma(1)$. Since $\widetilde B$ is simply connected we obtain that $\widetilde \gamma \simp \widetilde \eta_0 \ast \widetilde \alpha \ast \widetilde \eta_1$. Therefore, $\gamma\simp \eta_0\ast \alpha \ast \eta_1$.
\end{proof}

\begin{theo} 
Let $B$ be an Alexandroff space and let $p\colon E\to B$ be a fiber bundle over $B$. Then $p$ is a Hurewicz fibration.
\end{theo}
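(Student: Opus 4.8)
The plan is to reduce, via Theorem \ref{theo_fiber_bundles_give_functors}, to the case $p=\pi_B\colon\int\D_p\to B$ for a morphism-inverting functor $\D_p\colon B\to\Top$, and then to exhibit an explicit Hurewicz lifting function. Since $\D_p$ is morphism-inverting and $\loc B\cong\Pi_1(B)$ (subsection \ref{subsect_fundamental_groupoid}), $\D_p$ extends to a functor $\overline{\D_p}\colon\Pi_1(B)\to\Top$ satisfying $\overline{\D_p}([\eta(u\leq v)])=\D_p(u\leq v)$ for all $u\leq v$. I would then define
\[
\Lambda\colon \Big(\textstyle\int\D_p\Big)\times_{\pi_B}B^I\to\Big(\textstyle\int\D_p\Big)^I,\qquad \Lambda((b,x),\gamma)(t)=\big(\gamma(t),\overline{\D_p}([\gamma_{[0,t]}])(x)\big).
\]
This is well defined because $\gamma_{[0,t]}$ is a path from $b=\gamma(0)$ to $\gamma(t)$, so $\overline{\D_p}([\gamma_{[0,t]}])(x)\in\D_p(\gamma(t))$. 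The identities $\Lambda((b,x),\gamma)(0)=(b,x)$ and $\pi_B\Lambda((b,x),\gamma)=\gamma$ are immediate (the class of the constant path is the identity). Hence it remains only to prove that $\Lambda$ is continuous, which is the whole content of the theorem.

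By the exponential law for the locally compact Hausdorff space $I$, continuity of $\Lambda$ is equivalent to continuity of its adjoint $\widehat\Lambda\colon\big(\int\D_p\big)\times_{\pi_B}B^I\times I\to\int\D_p$, $((b,x),\gamma,t)\mapsto\big(\gamma(t),\overline{\D_p}([\gamma_{[0,t]}])(x)\big)$, and it suffices to show $\widehat\Lambda^{-1}(J(c,V))$ is open for each basic open set $J(c,V)$ of $\int\D_p$. Unwinding the definition, $\widehat\Lambda((b,x),\gamma,t)\in J(c,V)$ means $\gamma(t)\leq c$ together with $\D_p(\gamma(t)\leq c)\overline{\D_p}([\gamma_{[0,t]}])(x)\in V$, and the second condition rewrites, using $\overline{\D_p}([\eta(\gamma(t)\le c)])=\D_p(\gamma(t)\le c)$ and functoriality, as $\overline{\D_p}\big([\gamma_{[0,t]}*\eta(\gamma(t)\leq c)]\big)(x)\in V$.

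The key step is to pin down the path-homotopy class of $\gamma_{[0,t]}$ locally, for which Lemma \ref{lemma_path_space_locally_connected} is exactly tailored. First, the map $(\gamma,t)\mapsto\gamma_{[0,t]}$ is continuous $B^I\times I\to B^I$ (its adjoint $(\gamma,t,s)\mapsto\gamma(ts)$ is). Fix a point $((b_0,x_0),\gamma_0,t_0)$ in $\widehat\Lambda^{-1}(J(c,V))$, set $b'=\gamma_0(t_0)$ and $\rho_0=(\gamma_0)_{[0,t_0]}$, apply Lemma \ref{lemma_path_space_locally_connected} to $\rho_0$ to obtain an open $V_{\rho_0}\ni\rho_0$ in $B^I$, and let $N_1$ be its preimage under $(\gamma,t)\mapsto\gamma_{[0,t]}$. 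For $(\gamma,t)\in N_1$ one gets $b=\gamma(0)\leq b_0$ and $\gamma(t)\leq b'\leq c$ (so $\gamma(t)\le c$ is automatic); choosing the admissible connecting paths $\eta_0=\overline{\eta(b\leq b_0)}$ and $\eta_1=\eta(\gamma(t)\leq b')$ (which satisfy the hypotheses of the lemma) yields $\rho_0\simp\overline{\eta(b\leq b_0)}*\gamma_{[0,t]}*\eta(\gamma(t)\leq b')$. Since $U_c$ is contractible, a short computation gives $\gamma_{[0,t]}*\eta(\gamma(t)\leq c)\simp\eta(b\leq b_0)*\rho_0*\eta(b'\leq c)$, whence
\[
\overline{\D_p}\big([\gamma_{[0,t]}*\eta(\gamma(t)\leq c)]\big)=\D_p(b'\leq c)\,\overline{\D_p}([\rho_0])\,\D_p(b\leq b_0).
\]
Here $\D_p(b'\leq c)\overline{\D_p}([\rho_0])$ is a fixed homeomorphism $\D_p(b_0)\to\D_p(c)$ independent of the varying data; writing $V_0=\big(\D_p(b'\leq c)\overline{\D_p}([\rho_0])\big)^{-1}(V)$, the remaining condition becomes $\D_p(b\leq b_0)(x)\in V_0$, i.e.\ $(b,x)\in J(b_0,V_0)$. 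Thus $\widehat\Lambda^{-1}(J(c,V))$ contains the open neighbourhood $\{((b,x),\gamma,t): (b,x)\in J(b_0,V_0),\ (\gamma,t)\in N_1,\ \gamma(0)=b\}$ of the chosen point, and so is open.

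I expect the main obstacle to be precisely this continuity argument, and more specifically the realization that Lemma \ref{lemma_path_space_locally_connected} is the right tool to convert the ``holonomy'' $\overline{\D_p}([\gamma_{[0,t]}])$ into a fixed homeomorphism composed with the structure map $\D_p(b\leq b_0)$, so that the resulting open condition matches a basic open set $J(b_0,V_0)$ of the topological Grothendieck construction exactly. Once $\Lambda$ is shown to be continuous it is a Hurewicz lifting function for $\pi_B$, so $\pi_B$, and therefore $p$, is a Hurewicz fibration.
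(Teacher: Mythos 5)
Your proof is correct and takes essentially the same route as the paper: reduce to $\pi_B\colon\int\D_p\to B$ via Theorem \ref{theo_fiber_bundles_give_functors}, define the lifting function by $\Lambda(\gamma,b,x,t)=(\gamma(t),\overline{\D_p}([\gamma_{[0,t]}])(x))$, and prove continuity by pinning down the class $[\gamma_{[0,t]}]$ locally using Lemma \ref{lemma_path_space_locally_connected} exactly as the paper does. The only (harmless) difference is in handling the joint variation of $(\gamma,t)$: you observe directly that $(\gamma,t)\mapsto\gamma_{[0,t]}$ is continuous and pull back the lemma's open set, whereas the paper fixes $t$, confines $t'$ to a small interval $L$ with $\gamma'(\overline{L})\subseteq U_{\gamma(t)}$, and joins $\gamma'(t)$ to $\gamma'(t')$ by a path inside $U_{\gamma(t)}$ --- your version slightly streamlines that step.
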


\begin{proof}
By \ref{theo_fiber_bundles_give_functors}, there exists a morphism-inverting functor $\D\colon B\to\Top$ such that the projection $\pi^{\D}_B\colon \int \D \to B$ is a fiber bundle isomorphic to $p$. Clearly, it suffices to prove that $\pi^{\D}_B$ is a Hurewicz fibration.

Let $\loc\D\colon \loc B\to\Top$ be the functor induced by $\D$ and let $\overline{\D}$ be the composition $\Pi_1(B)\cong \loc B \overset{\loc\D}{\longrightarrow} \Top$. Let $B^I \times_{\pi^{\D}_B} \int \D$ be the pullback of the diagram $B^I \overset{\ev_0}{\longrightarrow} B \overset{\pi^{\D}_B}{\longleftarrow} \int \D$. We regard $B^I \times_{\pi^{\D}_B} \int\D$ as a subspace of the product space $B^I \times \int\D$.

Let $\Lambda\colon (B^I \times_{\pi^{\D}_B} \int \D)\times I \to \int \D$ be defined by $\Lambda(\gamma,b,x,t)=(\gamma(t),\overline{\D}([\gamma_{[0,t]}])(x))$ and let $\Lambda^\sharp\colon B^I \times_{\pi^{\D}_B} \int \D \to (\int \D)^I$ be the map induced by $\Lambda$ by the exponential law. We will prove that $\Lambda^\sharp$ is a path-lifting function for $\pi^{\D}_B$. Note that $\Lambda(\gamma,b,x,0)=(\gamma(0),\overline{\D}([\gamma_{[0,0]}])(x))=(b,x)$ and that $\pi^{\D}_B\Lambda(\gamma,b,x,t)=\gamma(t)$ for all $(\gamma,b,x)\in B^I \times_{\pi^{\D}_B} \int \D$ and $t\in I$. Thus, it remains to prove that $\Lambda$ is a continuous map.

Let $\beta\in B$ and let $W$ be an open subset of $\D(\beta)$. Let $(\gamma,b,x,t)\in\Lambda^{-1}(J_\D(\beta,W))$. Hence $\gamma(t)\leq \beta$ and $\D(\gamma(t)\leq\beta)(\overline{\D}([\gamma_{[0,t]}])(x))\in W$. Let $W_0=\overline{\D}([\gamma_{[0,t]}])^{-1}\D(\gamma(t)\leq\beta)^{-1}(W)$. Note that $W_0$ is an open neighbourhood of $x$ in $\D(\gamma(0))$.

By the previous lemma there exists an open subset $V\subseteq B^I$ such that $\gamma_{[0,t]}\in V\subseteq W(\{0\},U_{\gamma(0)})\cap W(\{1\},U_{\gamma(t)})$ satisfying that for all $\alpha\in V$ and for all paths $\eta_0,\eta_1 \in B^I$ such that $\eta_0(0)=\gamma(0)$, $\eta_0(1)=\alpha(0)$, $\eta_0(I)\subseteq U_{\gamma(0)}$, $\eta_1(0)=\alpha(1)$, $\eta_1(1)=\gamma(t)$ and $\eta_1(I)\subseteq U_{\gamma(t)}$ it holds that $\gamma_{[0,t]}\simp \eta_0\ast \alpha \ast \eta_1$.

Let $\nu\colon B^I \to B^I$ be defined by $\nu(\sigma)=\sigma_{[0,t]}$. It is not difficult to prove that $\nu$ is a continuous map. Let $L\subseteq I$ be an open interval such that $t\in L \subseteq \overline{L} \subseteq \gamma^{-1}(U_{\gamma(t)})$. Let 
$A= \nu^{-1}(V)\cap W(\overline{L},U_{\gamma(t)})$, let $N_0=A\times J_\D(\gamma(0),W_0)$ and let $N=N_0 \cap (B^I \times_{\pi^{\D}_B} \int \D)$. Note that $N$ is an open subset of $B^I \times_{\pi^{\D}_B} \int \D$.

We will prove that $(\gamma,b,x,t)\in N\times L \subseteq \Lambda^{-1}(J_\D(\beta,W))$. Clearly $\gamma\in A$ and $(b,x)\in J_\D(\gamma(0),W_0)$ since $\gamma(0)=b$. Hence $(\gamma,b,x,t)\in N\times L$. Now let $(\gamma',b',x',t')\in N\times L$. Note that $\gamma'(t')\leq \gamma(t)\leq\beta$ since $t'\in L$. If $t'\geq t$ let $\xi=\gamma'_{[t,t']}$, otherwise let $\xi=\overline{\gamma'_{[t',t]}}$. In any case, $\xi$ is a path in $B$ from $\gamma'(t)$ to $\gamma'(t')$ whose image is contained in $U_{\gamma(t)}$ since $\gamma'\in W(\overline{L},U_{\gamma(t)})$. Note that $\gamma'(0)\leq\gamma(0)$ since $\gamma'_{[0,t]}\in V$. Let $\eta_0=\eta(\gamma'(0)\leq\gamma(0))$.

We have that
\begin{align*}
\gamma'_{[0,t']}\ast \eta(\gamma'(t')\leq\beta) & \simp \eta_0 \ast \overline{\eta_0} \ast \gamma'_{[0,t]}\ast \xi \ast \eta(\gamma'(t')\leq\gamma(t)) \ast \eta(\gamma(t)\leq\beta) \simp \\ 
& \simp \eta_0 \ast \gamma_{[0,t]} \ast \eta(\gamma(t)\leq\beta) 
\end{align*}
since $\eta_0(I)\subseteq U_{\gamma(0)}$ and $(\xi \ast \eta(\gamma'(t')\leq\gamma(t)))(I)\subseteq U_{\gamma(t)}$.
Thus
\begin{align*}
\D(\gamma'(t')\leq \beta)(\overline{\D}([\gamma'_{[0,t']}])(x')) &= \overline{\D}([\gamma'_{[0,t']}\ast \eta(\gamma'(t')\leq\beta)  ])(x') = \\
&= \overline{\D}([ \eta_0 \ast \gamma_{[0,t]} \ast \eta(\gamma(t)\leq\beta)  ])(x') = \\ 
&= \D(\gamma(t)\leq\beta) \overline{\D}([\gamma_{[0,t]}]) \D(\gamma'(0)\leq\gamma(0))(x') 
\end{align*}
and since $\D(\gamma'(0)\leq\gamma(0))(x')\in W_0$ it follows that $\D(\gamma'(t')\leq \beta)(\overline{\D}([\gamma'_{[0,t']}])(x'))\in W$. Thus, $\Lambda(\gamma',b',x',t')\in J_\D(\beta,W)$. It follows that $\Lambda^{-1}(J_\D(\beta,W))$ is an open subset of $(B^I \times_{\pi^{\D}_B} \int \D)\times I $. Therefore, $\Lambda$ is a continuous map.
\end{proof}

\section{The topological Grothendieck construction as an equivalence of categories}

In this section we will define a suitable category of functors and prove that is equivalent to the category of fiber bundles over a fixed Alexandroff space by means of the topological Grothendieck construction. To this end, we need to define a convenient notion of arrows between functors which relies on the following definition.

\begin{definition}
Let $X$ and $Y$ be topological spaces, let $\T_Y$ be the topology of $Y$ and let $C(X,Y)$ be the set of continuous maps from $X$ to $Y$. We define a preorder $\preceq$ in $C(X,Y)$ by 
\begin{displaymath}
f\preceq g \Leftrightarrow \forall\, V \in \T_Y, g^{-1}(V) \subseteq f^{-1}(V) .
\end{displaymath}
\end{definition}

Observe that, with the notations of the previous definition, if $f\preceq g$ and $g\preceq f$ then $\Ko(f)=\Ko(g)$ by \ref{lemma_Kolmogorov_quotient}.

The following proposition states that this preorder coincides with the pointwise preorder in the case that the codomain is an Alexandroff space.

\begin{prop}
Let $X$ be a topological space and let $Y$ be an Alexandroff space. Let $f,g\colon X\to Y$ be continuous maps. Then $f\preceq g$ if and only if $f(x)\leq g(x)$ for all $x\in X$.
\end{prop}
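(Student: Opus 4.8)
The plan is to prove both implications directly, relying only on the two order-theoretic facts about the Alexandroff space $Y$ recalled in subsection \ref{subsect_posets}: each point $y\in Y$ has a minimal open neighbourhood $U_y=\{z\in Y\st z\leq y\}$, and the open subsets of $Y$ are precisely the lower sets of $(Y,\leq)$. In particular, for $a,b\in Y$ one has $a\leq b$ if and only if $a\in U_b$. These are the only ingredients the argument needs.

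For the forward implication, I would assume $f\preceq g$, fix $x\in X$, and test the defining condition of $\preceq$ against the single open set $V=U_{g(x)}$. Since $g(x)\in U_{g(x)}$ we have $x\in g^{-1}(V)$, so the hypothesis $g^{-1}(V)\subseteq f^{-1}(V)$ yields $x\in f^{-1}(V)$, that is $f(x)\in U_{g(x)}$. By the description of $U_{g(x)}$ as a down-set this is exactly $f(x)\leq g(x)$, and as $x$ was arbitrary this gives the pointwise inequality.

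For the converse, I would assume $f(x)\leq g(x)$ for all $x\in X$ and fix an arbitrary open subset $V\subseteq Y$. Given $x\in g^{-1}(V)$, we have $g(x)\in V$ and $f(x)\leq g(x)$; since $V$ is a lower set it follows that $f(x)\in V$, i.e.\ $x\in f^{-1}(V)$. Hence $g^{-1}(V)\subseteq f^{-1}(V)$ for every open $V$, which is precisely $f\preceq g$.

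I do not expect any genuine obstacle here: once the open sets of $Y$ are identified with lower sets, the equivalence is immediate and in fact purely order-theoretic, with continuity of $f$ and $g$ playing no role beyond their being well-defined functions into $Y$. The only point requiring a little care is the forward direction, where the correct move is to test $\preceq$ precisely on the minimal open neighbourhoods $U_{g(x)}$ rather than on arbitrary open sets of $Y$.
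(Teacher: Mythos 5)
Your proof is correct and is essentially identical to the paper's: the forward direction tests the condition on the minimal open neighbourhood $U_{g(x)}$, and the converse uses that open sets of $Y$ are lower sets. No differences worth noting.
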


\begin{proof}
Suppose that $f\preceq g$ and let $x\in X$. Then $x\in g^{-1}(U_{g(x)})\subseteq f^{-1}(U_{g(x)})$ and thus $f(x)\leq g(x)$. Conversely, suppose that $f\leq g$ and let $V$ be an open subset of $Y$. Let $x\in g^{-1}(V)$. Then $g(x)\in V$ and since $f(x)\leq g(x)$ we obtain that $f(x)\in V$. Hence, $g^{-1}(V) \subseteq f^{-1}(V)$.
\end{proof}

\begin{rem}
Let $X$ and $Y$ be topological spaces, let $\T_X$ be the topology of $X$ and let $\T_Y$ be the topology of $Y$. If $f\colon X\to Y$ is a continuous map then $f$ induces an order-preserving map $\widehat{f}\colon (\T_Y,\subseteq) \to (\T_X,\subseteq)$ defined by $\widehat{f}(V)=f^{-1}(V)$.

Hence, if $f,g\colon X\to Y$ are continuous maps then $f\preceq g$ if and only if $\widehat{g}\leq\widehat{f}$.
\end{rem}

\begin{definition} \label{def_weak_natural_transformation}
Let $B$ be an Alexandroff space and let $C,D\colon B\to \Top$ be functors. Let $\{\theta_b\colon C(b)\to D(b)\}_{b\in B}$ be a collection of continuous maps. We say that $\{\theta_b\}_{b\in B}$ is a \emph{weak natural transformation from $C$ to $D$} if for all $b_1,b_2\in B$ such that $b_1\leq b_2$ we have that $D(b_1\leq b_2)\theta_{b_1}\preceq \theta_{b_2}C(b_1\leq b_2)$.
\end{definition}

It is not difficult to verify that for any Alexandroff space $B$ there exists a category (that will be denoted by $\Top^B_w$) whose objects are the functors from $B$ to $\Top$ and whose morphisms are the weak natural transformations, with composition defined in a similar way as vertical composition of natural transformations.

The following proposition extends \ref{rem_int_is_functor}.

\begin{prop}
Let $B$ be an Alexandroff space. The topological Grothendieck construction induces a functor $\int\colon\Top^B_w\to\Top/B$ which sends each object $C$ of $\Top^B_w$ to the map $\pi_B^C\colon \int C\to B$ and each weak natural transformation $\tau=\{\alpha_b\}_{b\in B}\in \Hom_{\Top^B_w}(C,D)$ to the map $\int\tau\colon \int C \to \int D$ defined by $\int\tau(b,x)=(b,\alpha_b(x))$.
\end{prop}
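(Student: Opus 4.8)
The plan is to verify three things: that each $\int\tau$ is a continuous map over $B$, that $\int$ preserves identities, and that it preserves composition. The latter two are immediate from the pointwise formula $\int\tau(b,x)=(b,\alpha_b(x))$. Indeed $\pi^D_B\int\tau(b,x)=b=\pi^C_B(b,x)$, so each $\int\tau$ is a map over $B$ once continuity is established; the identity weak natural transformation of $C$ has components $\id_{C(b)}$, whence $\int(\id_C)=\id_{\int C}$; and since composition in $\Top^B_w$ is componentwise, if $\tau=\{\alpha_b\}_{b\in B}\colon C\Rightarrow D$ and $\tau'=\{\beta_b\}_{b\in B}\colon D\Rightarrow E$ then $\int(\tau'\tau)(b,x)=(b,\beta_b\alpha_b(x))=\int\tau'(\int\tau(b,x))$. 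Thus the entire content of the proposition reduces to the continuity of $\int\tau$.

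The main obstacle is that, unlike the case of genuine natural transformations treated in \ref{rem_int_is_functor}, the defining inequality of a weak natural transformation yields only one inclusion, so I cannot identify $(\int\tau)^{-1}(J_D(b,V))$ with a single basic open set $J_C(b,\alpha_b^{-1}(V))$ as was done there. Instead I would cover the preimage by basic open sets indexed by its own points. Fix $b\in B$ and an open subset $V\subseteq D(b)$, and let $(\beta,x)\in(\int\tau)^{-1}(J_D(b,V))$, so that $\beta\leq b$ and $D(\beta\leq b)\alpha_\beta(x)\in V$. Put $W=\alpha_\beta^{-1}(D(\beta\leq b)^{-1}(V))$, which is an open subset of $C(\beta)$ containing $x$, and consider the basic open set $J_C(\beta,W)$, which contains $(\beta,x)$ since $C(\beta\leq\beta)(x)=x\in W$.

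The heart of the argument is to show $J_C(\beta,W)\subseteq(\int\tau)^{-1}(J_D(b,V))$. Given $(\gamma,y)\in J_C(\beta,W)$, one has $\gamma\leq\beta$ and $C(\gamma\leq\beta)(y)\in W$, that is, $\alpha_\beta C(\gamma\leq\beta)(y)\in D(\beta\leq b)^{-1}(V)$. Writing $V'=D(\beta\leq b)^{-1}(V)$, the weak naturality condition $D(\gamma\leq\beta)\alpha_\gamma\preceq\alpha_\beta C(\gamma\leq\beta)$ gives, directly from the definition of $\preceq$, the inclusion $(\alpha_\beta C(\gamma\leq\beta))^{-1}(V')\subseteq(D(\gamma\leq\beta)\alpha_\gamma)^{-1}(V')$; since $y$ lies in the left-hand set, it lies in the right-hand one, so $D(\gamma\leq\beta)\alpha_\gamma(y)\in V'=D(\beta\leq b)^{-1}(V)$. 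Applying $D(\beta\leq b)$ and invoking functoriality $D(\beta\leq b)D(\gamma\leq\beta)=D(\gamma\leq b)$ yields $D(\gamma\leq b)\alpha_\gamma(y)\in V$, and together with $\gamma\leq\beta\leq b$ this says precisely that $(\gamma,y)\in(\int\tau)^{-1}(J_D(b,V))$. Hence every point of the preimage has a basic open neighbourhood contained in it, so the preimage is open and $\int\tau$ is continuous. This completes the verification that $\int$ is a functor; the only genuinely nontrivial point is this deployment of the one-sided inclusion supplied by $\preceq$ in tandem with functoriality, which is exactly what makes the weaker notion of morphism compatible with the construction.
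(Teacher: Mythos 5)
Your proof is correct and follows essentially the same route as the paper: both arguments show $(\int\tau)^{-1}(J_D(b,V))$ is open by attaching to each point $(\beta,x)$ of the preimage the basic neighbourhood $J_C(\beta,\alpha_\beta^{-1}(D(\beta\leq b)^{-1}(V)))$ and using the one-sided inclusion from $\preceq$ together with functoriality of $D$ to verify containment. The only difference is that you spell out the (routine) preservation of identities and composition, which the paper leaves implicit.
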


\begin{proof}
Let $\tau=\{\alpha_b\}_{b\in B}\in \Hom_{\Top^B_w}(C,D)$. We will prove that $\int\tau$ is a continuous map. For simplicity, let $\alpha=\int\tau$. Let $b_1\in B$ and let $V$ be an open subset of $D(b_1)$. We will prove that $\alpha^{-1}(J_D(b_1,V))$ is an open subset of $\int C$. Let $(b_0,x)\in \alpha^{-1}(J_D(b_1,V))$. Then $(b_0,\alpha_{b_0}(x))\in J_D(b_1,V)$ and thus $b_0\leq b_1$ and $D(b_0\leq b_1)(\alpha_{b_0}(x))\in V$. Hence $x\in (D(b_0\leq b_1)\alpha_{b_0})^{-1}(V)$. Let $W=D(b_0\leq b_1)^{-1}(V)$ and let $U=\alpha_{b_0}^{-1}(W)$. Note that $(b_0,x)\in J_C(b_0,U)$. We will prove that $J_C(b_0,U)\subseteq \alpha^{-1}(J_D(b_1,V))$. Let $(\beta,z)\in J_C(b_0,U)$. Then $\beta\leq b_0$ and $C(\beta\leq b_0)(z)\in U$. Thus $z\in C(\beta\leq b_0)^{-1} \alpha_{b_0}^{-1} (W)$ and since $\{\alpha_b\}_{b\in B}$ is a weak natural transformation we obtain that $z\in (D(\beta\leq b_0)\alpha_{\beta})^{-1}(W)$. Hence $D(\beta\leq b_0)\alpha_{\beta}(z)\in W$ and thus $D(b_0\leq b_1)D(\beta\leq b_0)\alpha_{\beta}(z)\in V$. Then $\alpha(\beta,z)=(\beta,\alpha_\beta(z))\in J_D(b_1,V)$. Therefore, $\alpha$ is a continuous map. The result follows.
\end{proof}

\begin{prop} \label{prop_hom_correspondence}
Let $B$ be an Alexandroff space and let $C,D\colon B\to \Top$ be functors. Then the functor $\int\colon\Top^B_w\to\Top/B$ induces a one-to-one correspondence between $\Hom_{\Top^B_w}(C,D)$ and $\Hom_{\Top/B}(\pi_B^C,\pi_B^D)$.
\end{prop}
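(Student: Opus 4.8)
The plan is to verify that $\int$ is both injective and surjective on the relevant hom-sets. Injectivity is immediate: if $\tau=\{\alpha_b\}_{b\in B}$ and $\tau'=\{\alpha'_b\}_{b\in B}$ are weak natural transformations with $\int\tau=\int\tau'$, then evaluating at $(b,x)$ gives $(b,\alpha_b(x))=(b,\alpha'_b(x))$ for all $b\in B$ and $x\in C(b)$, whence $\alpha_b=\alpha'_b$ for every $b$, and so $\tau=\tau'$.

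For surjectivity I would start with an arbitrary morphism over $B$, that is, a continuous map $\Phi\colon \int C\to \int D$ with $\pi_B^D\Phi=\pi_B^C$. This condition forces $\Phi(b,x)$ to have first coordinate $b$, so I can write $\Phi(b,x)=(b,\phi_b(x))$ for uniquely determined functions $\phi_b\colon C(b)\to D(b)$. The goal is then to show that $\tau=\{\phi_b\}_{b\in B}$ is a weak natural transformation with $\int\tau=\Phi$; the latter equality holds by construction, so it remains to check that each $\phi_b$ is continuous and that $\tau$ satisfies the weak naturality inequality. Continuity of $\phi_b$ follows from \ref{rem_embedding}: writing $\iota^C_b\colon C(b)\to\int C$ and $\iota^D_b\colon D(b)\to\int D$ for the canonical embeddings, one has $\Phi\iota^C_b=\iota^D_b\phi_b$, and since $\iota^D_b$ is a homeomorphism onto $\{b\}\times D(b)$ while $\Phi\iota^C_b$ is continuous and lands in that set, $\phi_b=(\iota^D_b)^{-1}\Phi\iota^C_b$ is continuous.

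The heart of the argument, and the step I expect to be the main obstacle, is the weak naturality condition: for $b_1\leq b_2$ and every open $V\subseteq D(b_2)$ I must show $(\phi_{b_2}C(b_1\leq b_2))^{-1}(V)\subseteq (D(b_1\leq b_2)\phi_{b_1})^{-1}(V)$. I would fix $x\in C(b_1)$ with $\phi_{b_2}(C(b_1\leq b_2)(x))\in V$ and consider the point $q=(b_2,C(b_1\leq b_2)(x))$ of $\int C$; then $\Phi(q)\in J_D(b_2,V)$, so $q\in \Phi^{-1}(J_D(b_2,V))$, which is open by continuity of $\Phi$. The key topological observation is that every basic open set $J_C(b,U)$ containing $q$ also contains $(b_1,x)$: indeed $J_C(b,U)\ni q$ means $b_2\leq b$ and $C(b_1\leq b)(x)=C(b_2\leq b)C(b_1\leq b_2)(x)\in U$, and since $b_1\leq b_2\leq b$ this is exactly the condition for $(b_1,x)\in J_C(b,U)$.

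Consequently $(b_1,x)$ lies in every open set containing $q$, so in particular $(b_1,x)\in\Phi^{-1}(J_D(b_2,V))$, giving $\Phi(b_1,x)=(b_1,\phi_{b_1}(x))\in J_D(b_2,V)$, i.e. $D(b_1\leq b_2)(\phi_{b_1}(x))\in V$. This is precisely the desired inclusion, so $\tau$ is a weak natural transformation, and $\int\tau=\Phi$ by construction; hence $\int$ is surjective on hom-sets and, together with the injectivity above, yields the claimed one-to-one correspondence. The essential point worth emphasizing is that the subtle non-pointwise preorder hidden in the topology of $\int C$ (through the asymmetry of the sets $J_C(b,U)$) is exactly what makes the \emph{weak} naturality inequality, rather than a genuine commuting square, the correct notion of morphism.
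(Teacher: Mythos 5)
Your proof is correct and follows essentially the same route as the paper: decompose a morphism over $B$ into fiberwise maps $\phi_b$, get continuity from \ref{rem_embedding}, and derive the weak naturality inequality by observing that any basic open set $J_C(b,U)$ containing $(b_2,C(b_1\leq b_2)(x))$ also contains $(b_1,x)$. The paper phrases this by choosing one such basic open set inside $\Phi^{-1}(J_D(b_2,V))$, while you note the specialization relation for all of them, but the computation is identical.
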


\begin{proof}
Let $\omega\colon \Hom_{\Top^B_w}(C,D) \to \Hom_{\Top/B}(\pi_B^C,\pi_D^C)$ be defined by $\omega(\tau)=\int\tau$. We will prove that $\omega$ admits an inverse.

For each $\alpha\in \Hom_{\Top/B}(\pi_B^C,\pi_B^D)$ and for each $b\in B$ we define $\alpha_b\colon C(b) \to D(b)$ as the only map such that $\alpha(b,x)=(b,\alpha_b(x))$ for all $x\in C(b)$. Note that $\alpha_b$ is a continuous map by \ref{rem_embedding}.

We will prove that, for all $\alpha\in \Hom_{\Top/B}(\pi_B^C,\pi_B^D)$, the collection $\{\alpha_b\}_{b\in B}$ is a weak natural transformation from $C$ to $D$. Let $\alpha\in \Hom_{\Top/B}(\pi_B^C,\pi_B^D)$ and let $b_1,b_2\in B$ such that $b_1\leq b_2$. Let $V$ be an open subset of $D(b_2)$. We have to prove that $(\alpha_{b_2}C(b_1\leq b_2))^{-1}(V) \subseteq  (D(b_1\leq b_2)\alpha_{b_1})^{-1}(V)$. Let $x\in (\alpha_{b_2}C(b_1\leq b_2))^{-1}(V)$ and let $y=C(b_1\leq b_2)(x)$. Then $\alpha_{b_2}(y)\in V$ and thus $(b_2,y)\in\alpha^{-1}(J_D(b_2,V))$. Hence, there exist $b_3\in B$ and an open subset $U\subseteq C(b_3)$ such that $(b_2,y)\in J_C(b_3,U)\subseteq \alpha^{-1}(J_D(b_2,V))$. Note that $(b_1,x)\in J_C(b_3,U)$ since $b_1\leq b_2\leq b_3$ and $C(b_1\leq b_3)(x)=C(b_2\leq b_3)(y)\in U$. Thus, $(b_1,\alpha_{b_1}(x))=\alpha(b_1,x)\in J_D(b_2,V)$. Hence, $D(b_1\leq b_2)\alpha_{b_1}(x)\in V$.

Let $\rho\colon \Hom_{\Top/B}(\pi_B^C,\pi_D^C) \to \Hom_{\Top^B_w}(C,D)$ be defined by $\rho(\alpha)=\{\alpha_b\}_{b\in B}$. Clearly, $\rho$ and $\omega$ are mutually inverse functions.
\end{proof}

Observe that the functor $\int\colon\Top^B_w\to\Top/B$ does not yield an equivalence of categories since it is not essentially surjective, as it was shown in \ref{ex_topological_Grothendieck_construction_is_not_essentially_surjective}. However, it induces equivalences of categories between certain full subcategories of $\Top^B_w$ and categories of fiber bundles over $B$, which are stated in the following theorem.

\begin{theo} \label{theo_equivalence_categories}\ 
\begin{enumerate}
\item Let $B$ be an Alexandroff space and let $F$ be a topological space. Let $\aut(F)^B$ denote the full subcategory of $\Top^B_w$ whose objects are the functors from $B$ to $\aut(F)$ and let $\Fib_B(F)$ denote the category of fiber bundles over $B$ with fiber $F$. Then, the functor $\int\colon\Top^B_w\to\Top/B$ induces an equivalence of categories between $\aut(F)^B$ and $\Fib_B(F)$.

\item Let $B$ be a connected non-empty Alexandroff space. Let $(\Top^B_w)'$ denote the full subcategory of $\Top^B_w$ whose objects are the morphism-inverting functors and let $\Fib_B$ denote the category of fiber bundles over $B$. Then, the functor $\int\colon\Top^B_w\to\Top/B$ induces an equivalence of categories between $(\Top^B_w)'$ and $\Fib_B$.
\end{enumerate}
\end{theo}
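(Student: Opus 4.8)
The plan is to show, for each of the two items, that the relevant restriction of $\int$ is fully faithful and essentially surjective, and is therefore an equivalence of categories. Throughout I regard $\Fib_B(F)$ and $\Fib_B$ as full subcategories of $\Top/B$, their arrows being maps over $B$. Most of the work has already been carried out earlier: Proposition \ref{prop_hom_correspondence} will supply full faithfulness essentially for free, and Theorem \ref{theo_fiber_bundles_give_functors} will supply essential surjectivity.

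First I would verify that the restrictions are well defined on objects, i.e.\ that $\int$ carries each source subcategory into the claimed target. For item (1) this is exactly \ref{coro_functor_to_Aut_F_fiber_bundle}: each functor $C\colon B\to\aut(F)$ gives the fiber bundle $\pi^{\iota C}_B$ with fiber $F$. For item (2) it is \ref{coro_int_D_is_fiber_bundle}: over a connected non-empty $B$ each morphism-inverting functor gives a fiber bundle. On arrows nothing further is needed, because the targets are full in $\Top/B$. Full faithfulness is then immediate from \ref{prop_hom_correspondence}, which asserts precisely that $\int\colon\Top^B_w\to\Top/B$ is a bijection on every hom-set, i.e.\ a fully faithful functor. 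A fully faithful functor restricts to a fully faithful functor between any two full subcategories that it maps into one another, since the hom-sets of a full subcategory agree with those of the ambient category; as $\aut(F)^B$ and $(\Top^B_w)'$ are full in $\Top^B_w$ and, by the previous sentence, land in the full subcategories $\Fib_B(F)$ and $\Fib_B$, both restrictions are fully faithful.

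It remains to establish essential surjectivity, which is the step requiring the most care and where the two items differ. For item (2) it is direct: given a fiber bundle $p$ over the connected non-empty space $B$, Theorem \ref{theo_fiber_bundles_give_functors} yields a morphism-inverting functor $\D_p$, an object of $(\Top^B_w)'$, with $\int\D_p=\pi^{\D_p}_B$ isomorphic to $p$ over $B$. For item (1) the same theorem gives $\D_p$ with fibers $\D_p(b)=p^{-1}(b)$ only homeomorphic to $F$, so $\D_p$ need not take values in $\aut(F)$. To repair this I would invoke Lemma \ref{lemma_top0_vs_autF} to produce a functor $\E\colon B\to\aut(F)$ and a natural isomorphism $\iota\E\cong\D_p$; by Remark \ref{rem_int_is_functor} this natural isomorphism induces an isomorphism $\pi^{\iota\E}_B\cong\pi^{\D_p}_B$ of maps over $B$, so $\int\E$ is isomorphic to $p$ in $\Fib_B(F)$. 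The empty base in item (1) is handled trivially, both categories then reducing to a single object carrying only its identity.

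The only genuine obstacle is thus the last step of item (1): one must replace the concretely constructed $\D_p$, whose values are merely homeomorphic to $F$, by a genuinely $\aut(F)$-valued functor without altering the isomorphism class of the associated bundle. Lemma \ref{lemma_top0_vs_autF} together with Remark \ref{rem_int_is_functor} is exactly what makes this replacement harmless, and once it is in place the three properties (well-definedness, full faithfulness, essential surjectivity) combine to give the two asserted equivalences.
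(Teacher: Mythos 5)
Your proposal is correct and follows essentially the same route as the paper: restriction well-defined by \ref{coro_functor_to_Aut_F_fiber_bundle} and \ref{coro_int_D_is_fiber_bundle}, full faithfulness from \ref{prop_hom_correspondence} together with fullness of the subcategories, and essential surjectivity from \ref{theo_fiber_bundles_give_functors}, supplemented for item (1) by \ref{lemma_top0_vs_autF} and \ref{rem_int_is_functor}. The only difference is that you spell out the details (including the empty-base case) that the paper leaves implicit.
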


\begin{proof} \ 
By \ref{coro_functor_to_Aut_F_fiber_bundle} the functor $\int\colon\Top^B_w\to\Top/B$ can be restricted to a functor $\aut(F)^B \to \Fib_B(F)$, which is fully faithful by \ref{prop_hom_correspondence} and essentially surjective by \ref{theo_fiber_bundles_give_functors}, \ref{lemma_top0_vs_autF} and \ref{rem_int_is_functor}. This proves (1).

Similarly, by \ref{coro_int_D_is_fiber_bundle} the functor $\int\colon\Top^B_w\to\Top/B$ can be restricted to a functor $(\Top^B_w)'\to\Fib_B$ which is fully faithful by \ref{prop_hom_correspondence} and essentially surjective by \ref{theo_fiber_bundles_give_functors}. This proves (2).
\end{proof}

\bibliographystyle{acm}
\bibliography{ref_fiber_bundles}

\end{document}